\documentclass[a4paper,11pt, twoside]{article}
\usepackage[T1]{fontenc}
\usepackage[english]{babel}
\pdfoutput=1 

\usepackage{amsmath, amssymb, amsthm, amsfonts, mathtools, mathrsfs}	
\usepackage{cancel}
\usepackage{enumerate}
\usepackage[shortlabels]{enumitem}
\usepackage{booktabs, caption, graphicx, subfig, wrapfig} 	
\captionsetup{tableposition=top,figureposition=bottom,font=small,format=hang,labelfont={sf,bf}}
\usepackage{geometry}
\geometry{a4paper,top=2.5cm,bottom=2.5cm,left=2.5cm,right=2.5cm,heightrounded,bindingoffset=0mm}

\usepackage{microtype}
\usepackage{xcolor}
\usepackage{bbm}

\usepackage{cite} 

\usepackage{fancyhdr} 

\newcommand{\R}{\mathbb{R}}
\newcommand{\N}{\mathbb{N}}

\numberwithin{equation}{section}

\theoremstyle{plain}
\newtheorem{theorem}{Theorem}[section] 
\newtheorem{proposition}[theorem]{Proposition} 
\newtheorem{lemma}[theorem]{Lemma}

\theoremstyle{definition}
\newtheorem{definition}[theorem]{Definition}
\newtheorem{remark}[theorem]{Remark}




\DeclarePairedDelimiter{\abs}{\lvert}{\rvert}
\DeclarePairedDelimiter{\norm}{\lVert}{\rVert}
\DeclarePairedDelimiter{\duality}{\langle}{\rangle}

\DeclareMathOperator{\divergence}{div}
\DeclareMathOperator{\Span}{span}

\renewcommand{\div}{\divergence}
\newcommand{\eps}{\varepsilon}
\renewcommand{\phi}{\varphi}
\renewcommand{\bar}{\overline}
\renewcommand{\vec}{\boldsymbol}

\def\genspazio #1#2#3#4#5{#1^{#2}(#5,#4;#3)}
\def\spazio #1#2#3{\genspazio {#1}{#2}{#3}T0}

\def\LT {\spazio L}
\def\HT {\spazio H}
\def\WT {\spazio W}
\def\CT #1#2{C^{#1}([0,T];#2)}


\def\Lx #1{L^{#1}(\Omega)}
\def\Lt #1{L^{#1}(0,T)}
\def\Lqt #1{L^{#1}(Q_T)}
\def\Hx #1{H^{#1}(\Omega)}
\def\Wx #1{W^{#1}(\Omega)}

\def\Accorpa #1#2 #3 {\gdef #1{\eqref{#2}--\eqref{#3}}%
	\wlog{}\wlog{\string #1 -> #2 - #3}\wlog{}}

\DeclareFontFamily{U}{mathx}{}
\DeclareFontShape{U}{mathx}{m}{n}{<-> mathx10}{}
\DeclareSymbolFont{mathx}{U}{mathx}{m}{n}
\DeclareMathAccent{\widehat}{0}{mathx}{"70}
\DeclareMathAccent{\widecheck}{0}{mathx}{"71}


\def\mezzo {\frac{1}{2}}
\def\ddt {\frac{\de}{\de t}}
\def\de {\mathrm{d}}

\def\weakstar {\stackrel{\ast}{\rightharpoonup}}
\def\weak {\rightharpoonup}

\def\n {\vec{n}}
\def\phid {\phi_\delta}
\def\mud {\mu_\delta}
\def\sigmad {\sigma_\delta}
\def\dg {\frac{\delta}{\gamma}}
\def\Ncal {\mathcal{N}}
\def\phide {\phi_{\delta, \eps}}
\def\mude {\mu_{\delta, \eps}}
\def\sigmade {\sigma_{\delta, \eps}}
\def\Psipe {\Psi_{+, \eps}}
\def\phiden {\phi^n_{\delta, \eps}}
\def\muden {\mu^n_{\delta, \eps}}
\def\sigmaden {\sigma^n_{\delta, \eps}}

\usepackage[pdfencoding=unicode]{hyperref}
\usepackage{orcidlink}

\begin{document}
	
	\begin{center}
		
		\LARGE{\textbf{On a relaxed Cahn--Hilliard tumour growth model \\ with single-well potential and degenerate mobility}}
		
		\vskip0.75cm
        \Large{\it In memoriam Federica Bubba (1992-2020)}
        
         \vskip0.75cm

		\large{\textsc{Cecilia Cavaterra$^1$} \orcidlink{0000-0002-2754-7714}} \\
		\normalsize{e-mail: \texttt{cecilia.cavaterra@unimi.it}} \\
		\vskip0.4cm
		
		\large{\textsc{Matteo Fornoni$^2$} \orcidlink{0000-0002-9787-047X}} \\
		\normalsize{e-mail: \texttt{matteo.fornoni@unimi.it}} \\
		\vskip0.4cm
		
		\large{\textsc{Maurizio Grasselli$^3$} \orcidlink{0000-0003-2521-2926}} \\
		\normalsize{e-mail: \texttt{maurizio.grasselli@polimi.it}} \\
		\vskip0.4cm

		\large{\textsc{Benoît Perthame$^4$} \orcidlink{0000-0002-7091-1200}} \\
		\normalsize{e-mail: \texttt{Benoit.Perthame@sorbonne-universite.fr}} \\
		\vskip0.4cm
		
		\footnotesize{$^1$Dipartimento di Matematica ``F. Enriques'', Universit\`{a} degli Studi di Milano \& IMATI--C.N.R., Pavia, Italy}
		\vskip0.3cm
		
		\footnotesize{$^2$Dipartimento di Matematica ``F. Enriques'', Universit\`{a} degli Studi di Milano, Italy}
		\vskip0.3cm

		\footnotesize{$^3$Dipartimento di Matematica, Politecnico di Milano, Italy}
		\vskip0.3cm

		\footnotesize{$^4$Sorbonne Université, CNRS, Université Paris-Cité, Inria,  Laboratoire ``Jacques-Louis Lions'', Paris, France}
		\vskip0.5cm
		
	\end{center}

	\begin{abstract}\noindent
		We consider a phase-field system modelling solid tumour growth. This system consists of a Cahn--Hilliard equation coupled with a nutrient equation. The former is characterised by a degenerate mobility and a singular potential. Both equations are subject to suitable reaction terms which model proliferation and nutrient consumption. Chemotactic effects are also taken into account. Adding an elliptic regularisation, depending on a relaxation parameter $\delta>0$, in the equation for the chemical potential, we prove the existence of a weak solution to an initial and boundary value problem for the relaxed system. Then, we let $\delta$ go to zero, and we recover the existence of a weak solution to the original system.
		\vskip3mm
		
		\noindent {\bf Key words:} Tumour growth, phase-field models, Cahn--Hilliard equation, degenerate mobility, singular potential, nutrient equation, chemotaxis, existence of a weak solution.
		
		\vskip3mm
		
		\noindent {\bf AMS (MOS) Subject Classification:} 35Q92, 92C15.
		
	\end{abstract}

	\pagestyle{fancy}
	\fancyhf{}	
	\fancyhead[EL]{\thepage}
	\fancyhead[ER]{\textsc{Cavaterra -- Fornoni -- Grasselli -- Perthame}} 
	\fancyhead[OL]{\textsc{On a relaxed degenerate Cahn--Hilliard tumour growth model}} 
	\fancyhead[OR]{\thepage}

	\renewcommand{\headrulewidth}{0pt}
	\setlength{\headheight}{5mm}

	\thispagestyle{empty} 

	\section{Introduction}

    The diffuse interface or phase-field approach has recently found many applications in the description of biological phenomena, among which tumour growth is a prime example \cite{OHP2010, cristini:lowengrub}. 
    This is due to a solid physical foundation on continuum mechanics and the theory of mixtures, and to the mathematical properties of the corresponding systems of PDEs, both from the analytical and numerical point of view.
	In this context, a basic phase-field model in the mechanics of living tissues is the following initial and boundary value problem for a degenerate Cahn--Hilliard equation
	\begin{alignat*}{2}
		& \partial_t \phi - \div (b(\phi) \nabla \mu) = 0 \quad && \hbox{in $Q_T$,} \\
		& \mu = - \gamma \Delta \phi + \Psi'(\phi) \quad && \hbox{in $Q_T$,} \\
		& \partial_{\n} \phi = b(\phi) \partial_{\n} \mu = 0 \quad && \hbox{on $\Sigma_T$,} \\
		& \phi(0) = \phi_0 \quad && \hbox{in $\Omega$.}
	\end{alignat*}
	Here $Q_T = \Omega \times (0,T)$, where $T>0$ is a given final time, and $\Sigma_T = \partial \Omega \times (0,T)$, where $\Omega \subset \R^d$, $d = 2,3$, is a given bounded domain.
	In the above model, $\phi$ usually represents the volume fraction of one of the two cell types, while $\mu$ is the corresponding chemical potential, namely the first variation of the Ginzburg--Landau-type free energy
	\[
		\mathcal{E}_{GL}(\phi) = \int_\Omega \frac{\gamma}{2} \abs{\nabla \phi}^2 + \Psi(\phi) \, \de x.
	\]
	In this case, $\gamma > 0$ is a positive parameter related to the width of the diffuse interface, that is, the region of width proportional to $\sqrt{\gamma}$ where $\phi$ varies continuously between the pure phases $\phi = 0$ and $\phi = 1$.
	Thus, the first term in the functional penalises steep transitions between the two phases, while the potential $\Psi(\phi)$ is typically of double-well type with equal minima in $\phi=0$ and $\phi=1$. However, in solid tumour growth modelling (see, e.g., \cite{AACG2017,ACGAC2018,BP2003} and references therein), a more appropriate assumption for  $\Psi$ is that it is concave and degenerate near $\phi=0$ (short-range attraction) and convex for  $\phi$ close to $1$ (long-range repulsion). Moreover, it is usually taken singular at $\phi=1$ to model saturation by one phase.
	A prototypical potential density satisfying these properties is the \emph{single-well logarithmic potential} of Lennard-Jones type
	\begin{equation}
		\label{eq:singlewell}
		\Psi(s) = - (1 - s^*) \log(1 - s) - \frac{s^3}{3} - (1 - s^*)\frac{s^2}{2} - (1 - s^*)s \quad \hbox{for $s \in [0,1)$},
	\end{equation}
	for some $s^* \in (0,1)$, corresponding to the position of the single-well.
	The mobility function $b(\cdot)$ is usually assumed to degenerate at pure phases, such as
	\begin{equation}
		\label{eq:mobility}
		b(s) = s(1 - s)^2,
	\end{equation}
	thus formally enforcing the property that $0 \le \phi \le 1$.

	The combination of a singular potential and a degenerate mobility makes the above equation very challenging to study analytically.
	Indeed, only very weak existence results are available in the literature \cite{AACG2017,ABCG2025} (see also \cite{EG1996} for the classical Cahn--Hilliard equation, and the recent \cite{GKS2025} for an anisotropic version).
	At the same time, numerical analysis is also highly non-trivial, due to the degeneracy of the fourth-order differential operator.
	For these reasons, in \cite{PP2021}, the authors introduced a relaxed degenerate Cahn--Hilliard equation combined with a convex splitting. The relaxation is obtained by adding an elliptic regularisation term to the definition of the chemical potential~$\mu$, multiplied by a small parameter $\delta > 0$.
	In this way, the resulting Cahn--Hilliard equation can be seen as a system of two coupled PDEs, a parabolic and an elliptic one respectively.
	More precisely, the system proposed in \cite{PP2021} takes the following form:
	\begin{alignat*}{2}
		& \partial_t \phi - \div (b(\phi) \nabla (\mu + \Psi'_+(\phi))) = 0 \quad && \hbox{in $Q_T$,} \\
		& - \delta \Delta \mu + \mu = - \gamma \Delta \phi + \Psi'_- \left( \phi - \frac{\delta}{\gamma} \mu \right) \quad && \hbox{in $Q_T$,} \\
		& \partial_{\n} \left( \phi - \frac{\delta}{\gamma} \mu \right) = b(\phi) \partial_{\n} (\mu + \Psi'_+(\phi)) = 0 \quad && \hbox{on $\Sigma_T$,} \\
		& \phi(0) = \phi_0 \quad && \hbox{in $\Omega$.}
	\end{alignat*}
	The convex splitting $\Psi = \Psi_+ + \Psi_-$ allows us to keep the convex and stable part in the parabolic equation for $\phi$ and the concave and unstable part $\Psi_-$ in the (now elliptic) equation for $\mu$.
	In the case of the potential introduced above \eqref{eq:singlewell} we have, 
	for some given $\kappa\in\mathbb{R}$,
    \begin{equation}
    \label{eq:potdecomp}
		\Psi_+(s) = - (1 - s^*) \log(1 - s) - \frac{s^3}{3} + \kappa,
		\quad \Psi_-(s) = - (1 - s^*) \frac{s^2}{2} - (1 - s^*) s,
    \end{equation}
    where $\Psi_+$ is convex if $s^* \in (0,0.7]$. 
    We will tacitly assume this condition on $s^*$ in everything that follows.
	In \cite{PP2021} the authors established the existence of a weak solution for $\delta > 0$. Then, they proved that a suitable sequence converges to a weak solution to the original system as~$\delta \to 0$. This provides a validation of the approximation, which turns to be useful also from the numerical viewpoint. Indeed, in~\cite{BP2022}, the authors derive and analyse a structure-preserving numerical scheme for the relaxed system, showing the validity of the proposed relaxation.

	In this contribution, we extend this relaxation to a more refined phase-field model for tumour growth which accounts for the presence of a nutrient as well as for chemotaxis.
	We recall that many solid tumour growth models based on the Cahn--Hilliard equation are available in the literature (see, e.g., \cite{Fritz2023} and references therein), however, degenerate mobility is rarely employed, despite being relevant from a physical viewpoint, due to the above-mentioned technical difficulties.

	Let us introduce our extended model. The phase variable $\phi$ represents the volume fraction of tumour cells in a tissue, with $\phi=0$ being the healthy phase and $\phi=1$ the cancerous one.
	Then, we model the tumour proliferation through the consumption of a key nutrient $\sigma$, such as oxygen or glucose, whose evolution is ruled by a reaction-diffusion equation.
	Additionally, by including also some cross-diffusion terms weighted by a non-negative parameter $\chi \ge 0$, we introduce chemotactic effects to model the natural movement of tumour cells towards regions with higher nutrient concentrations.
	Summing up, we study the following system:
	\begin{alignat}{2}
		& \partial_t \phid
		- \div (b(\phid) \nabla (\mud + \Psi'_+(\phid)))
		= R_1(\phid, \mud, \sigmad), \label{eq:phi} \\
		& - \delta \Delta \mud + \mud
		= - \gamma \Delta \phid
		+ \Psi'_- \left( \phid - \frac{\delta}{\gamma} \mud \right)
		- \chi \sigmad,
        \label{eq:mu} \\
		& \partial_t \sigmad -
		\Delta \left( \sigmad + \chi \left( 1 - \left(\phid - \frac{\delta}{\gamma} \mud \right) \right) \right)
		= R_2(\phid, \mud, \sigmad),
        \label{eq:sigma}
        \end{alignat}
        in $Q_T$, subject to the boundary and initial conditions
        \begin{alignat}{2}
		& \partial_{\n} \left( \phid - \frac{\delta}{\gamma} \mud \right)
		= b(\phid) \partial_{\n} (\mud + \Psi'_+(\phid))
		= \partial_{\n} \sigmad = 0
		\quad && \hbox{on $\Sigma_T$,} \label{eq:bc} \\
		& \phid(0) = \phi_0,
		\quad \sigmad(0) = \sigma_0
		\quad && \hbox{in $\Omega$.} \label{eq:ic}
	\end{alignat}
	This system is characterised by the following free energy
	\begin{equation}
		\label{eq:freeenergy}
		\begin{split}
		\mathcal{E}_\delta(\phid, \sigmad) & =
		\int_\Omega \left( \Psi_+(\phid)
		+ \frac{\gamma}{2} \abs*{\nabla \left(\phid - \frac{\delta}{\gamma} \mud \right)}^2
		+ \frac{\delta}{2 \gamma} \abs{\mud}^2
		+ \Psi_-\left(\phid - \frac{\delta}{\gamma} \mud\right) \right) \, \de x \\
		& \quad
		+ \int_\Omega \left( \mezzo \abs{\sigmad}^2
		+ \chi \sigmad \left( 1 - \left( \phid - \frac{\delta}{\gamma} \mud \right) \right) \right)
		\, \de x.
		\end{split}
	\end{equation}
	We observe that, even if $\mud$ appears in its expression, for any fixed time $t \in (0,T)$, the free energy actually depends only on $\phid$ and $\sigmad$, since $\mud$ can be determined as the unique solution to the stationary elliptic equation \eqref{eq:mu}.
    Moreover, the system also presents an entropy structure due to the presence of the degenerate mobility. 
    Indeed, one can define an entropy density function $\eta:(0,1) \to \R$ as the solution to the following Cauchy problem
    \begin{equation*}
		\begin{cases}
			b(s) \eta''(s) = 1, \\
			\eta(1/2) = \eta'(1/2) = 0.
		\end{cases}
	\end{equation*}
    Then, the entropy functional is given by 
    \[
        \mathcal{S}(\phid) = \int_\Omega \eta(\phid) \, \de x.
    \]
	Going back to our system, the choice of the reaction terms $R_1(\phid, \mud, \sigmad)$ and $R_2(\phid, \mud, \sigmad)$ is a rather delicate matter, especially in the limiting case $\delta = 0$.
	Indeed, due to very weak regularity that can be established on account of the degenerate mobility, they have to preserve in some way the energy and entropy structure of the system, uniformly in $\delta$.
	For this reason, we adopt the choices made in \cite{FLR2017} for a nonlocal version of the same model with degenerate mobility and singular double-well potential. These choices are based on the original model introduced in~\cite{HZO2012}.
	More precisely, we assume:
	\begin{equation}
		\label{eq:sources}
		\begin{split}
		& R_1(\phid, \mud, \sigmad) = P(\phid) \left( \sigmad + \chi \left( 1 - \left( \phid - \frac{\delta}{\gamma} \mud \right) \right) - (\mud + \Psi'_+(\phid)) \right), \\
		& R_2(\phid, \mud, \sigmad) = - R_1(\phid, \mud, \sigmad),
		\end{split}
	\end{equation}
	where $P(\phid)$ is a suitable non-negative proliferation function.
	The main contribution of these reaction terms is the first one, which models the evolution of the tumour by the consumption of nutrient, while the other two terms are related to the Lyapunov structure of the system.
	In our case, this is fundamental to have an energy inequality independent of $\delta > 0$.
	At the same time, to balance the degenerate mobility and guarantee an entropy structure, the proliferation function $P$ must also be degenerate in $0$ and $1$, namely one can also assume that it has the same expression of $b$, i.~e.~$P(s) = P_0 s (1 - s)^2$ for some $P_0 > 0$.

	It is worth observing that the choice of a degenerate mobility is particularly relevant with these reaction terms, as in the numerical simulations shown in \cite{HZO2012} we can see the formation of branches in the tumour's evolution when chemotactic effects are relevant enough.
	Finally, on account of \eqref{eq:sources}, setting $\delta=0$ in our problem, we formally recover the original tumour growth model proposed in \cite{HZO2012}:
	\begin{alignat}{2}
		& \partial_t \phi
		- \div (b(\phi) \nabla \mu)
		= P(\phi)(\sigma + \chi (1 - \phi) - \mu)
		\quad && \hbox{in $Q_T$,} \label{eq:phi0} \\
		& \mu
		= - \gamma \Delta \phi
		+ \Psi'(\phi)
		- \chi \sigma
		\quad && \hbox{in $Q_T$,} \label{eq:mu0} \\
		& \partial_t \sigma -
		\Delta \left( \sigma + \chi (1 - \phi) \right)
		= - P(\phi)(\sigma + \chi (1 - \phi) - \mu)
		\quad && \hbox{in $Q_T$,} \label{eq:sigma0} \\
		& \partial_{\n} \phi
		= b(\phi) \partial_{\n} \mu
		= \partial_{\n} \sigma = 0
		\quad && \hbox{on $\Sigma_T$,} \label{eq:bc0} \\
		& \phi(0) = \phi_0,
		\quad \sigma(0) = \sigma_0
		\quad && \hbox{in $\Omega$.} \label{eq:ic0}
	\end{alignat}
    We recall that system \eqref{eq:phi0}--\eqref{eq:ic0} was first analysed
    in \cite{FGR2015} in the case of constant mobility and regular potential, without chemotaxis. The singular potential was then considered in \cite{CGH2015}, but in the viscous case. Then, systems like \eqref{eq:phi0}--\eqref{eq:sigma0} have been widely studied in the recent literature, also with respect to its nonlocal variants \cite{F2023_viscous, F2024, GMS2026}, optimal control \cite{CGRS2017, CSS2021_secondorder, DRST2025}, inverse problems \cite{FLS2021, ABCFR2025}, long-time behaviour \cite{CRW2021, GY2020, Ya2025}, and numerical analysis \cite{SLT2025,WLY2025}.
    However, the case of degenerate mobility and a singular single-well potential has remained unexplored so far.

    Our main results include the proof of the existence of a weak solution to \eqref{eq:phi}--\eqref{eq:ic} for $\delta > 0$ (Theorem \ref{thm:weaksols_delta}), and the rigorous limit as $\delta \to 0$ to recover a weak solution to \eqref{eq:phi0}--\eqref{eq:ic0} (Theorem \ref{thm:conv_deltato0}).
   The first result is obtained by extending the approach used in \cite{PP2021}, which is based on suitable approximations of the potential and the mobility, combined with a Galerkin scheme. It is interesting to note that, in the relaxed case $\delta>0$, the chemical potential has a nonlocal spatial dependence on $\phi$. Thus, our second main result can be viewed as a nonlocal-to-local convergence. For a similar issue related to the nonlocal Cahn--Hilliard equation, see, for instance, \cite{DRST2020, DST2021, ES2023, ES2025}.
   We point out that our novel existence result for weak solutions to \eqref{eq:phi0}--\eqref{eq:ic0} is obtained passing to the limit in a non-approximated version of the relaxed problem (cf. \cite{PP2021}), which can be solved numerically in a more efficient way.
    
    The plan of the paper is as follows. In Section \ref{sec:hps_res} we introduce some notation and the basic assumptions, as well as the main results of the paper. Section \ref{apprsols} is devoted to the construction of approximating solutions by regularising the mobility, the potential, and the proliferation function. Sections \ref{relaxprob} and \ref{limitprob} contain the proof of the two main results, respectively. A technical lemma is reported in Appendix.

	\section{Main results}
    \label{sec:hps_res}

    Our first purpose is to introduce some notation that will be used throughout the paper and the key hypotheses on the parameters of the system \eqref{eq:phi}--\eqref{eq:ic}.

    Let $X$ be a real Banach space, and denote by $\norm{\cdot}_X$ its norm, by $X^*$ its dual space, and by $\duality{\cdot,\cdot}_X$ its corresponding duality product.
    If $X$ is a Hilbert space, then we indicate by $(\cdot, \cdot)_X$ its inner product.
    Given a sufficiently smooth bounded domain $\Omega \subset \R^d$, $d = 2,3$, we use the standard notation $\Lx p$ and $\Wx {k,p}$ for the Lebesgue and Sobolev spaces, respectively, for $p \in [1,+\infty]$ and $k \in \N$.
    If $p = 2$, we set $\Hx k = \Wx {k,2}$.
    For any function $f \in \Lx 1$, we denote its mean value as
    \[
        \overline{f} := \frac{1}{\abs{\Omega}} \int_\Omega f \, \de x.
    \]
    Given $T > 0$ and a Banach space $X$, we denote by $\LT p X$ and $\WT {k,p} X$ the corresponding Bochner spaces for $p \in [1,+\infty]$ and $k \in \N$.
    If $p = 2$, we similarly write $\HT k X = \WT {k,2} X$.

    Next, we introduce the Hilbert triplets that will be extensively used in the following. Setting
    \[
        H := \Lx 2, \quad V := \Hx 1, \quad W := \{ u \in \Hx 2 \mid \partial_{\n} u = 0 \hbox{ on $\Omega$} \},
    \]
    and identifying $H$ with its dual space, the following compact and dense embeddings hold:
    \[
        W \subset\subset V \subset\subset H \cong H^* \subset\subset V^* \subset\subset W^*.
    \]
    Additionally, we recall that, by elliptic regularity theory, if $\partial \Omega$ is of class $C^2$, the norm
    \[
        \norm{u}_W^2 := \norm{u}^2_H + \norm{\Delta u}^2_H
    \]
    is equivalent to the $\Hx 2$-norm on $W$.
    Next, we introduce the Riesz isomorphism $\Ncal: V \to V^*$ defined by
    \[
        \duality{\Ncal u, v}_V := \int_\Omega ( \nabla u \cdot \nabla v + u v) \, \de x \quad \hbox{for any $u,v \in V$.}
    \]
    It is well-known that $\Ncal$ is an isomorphism between $V$ and $V^*$, and that its inverse $\Ncal^{-1}$ is the operator which associates to any $f \in V^*$ the weak solution $u \in V$ to the following Neumann--Laplace problem:
    \begin{equation*}
		\begin{cases}
			- \Delta u + u = f \quad & \hbox{in $\Omega$,} \\
			\partial_{\n} u = 0 \quad & \hbox{on $\partial \Omega$.}
		\end{cases}
	\end{equation*}
	In particular, the following identities hold
	\[
		\norm{f}^2_{V^*} = \norm{u}^2_{V},
        \quad \duality{f, \mathcal{N}^{-1} f}_{V} = \norm{f}^2_{V^*},
        \quad \duality{\mathcal{N}u, \mathcal{N}^{-1}u}_{V} = \norm{u}^2_{H}.
	\]
    Moreover, it is well-known that for $u \in W$ we have that $\Ncal u = - \Delta u + u \in H$ and that the restriction of $\Ncal$ to $W$ is an isomorphism from $W$ to $H$.
    Finally, we recall that, by the Spectral Theorem, there exists a sequence of eigenvalues $0 < \lambda_1 \le \lambda_2 \le \dots$, with $\lambda_j \to \infty$ as $j \to \infty$, and a family of eigenfunctions $\{w_j\}_{j \in \N}  \subset W$ such that $\Ncal w_j = \lambda_j w_j$, which forms an orthonormal Schauder basis in $H$ and an orthogonal Schauder basis in $V$.
    In particular, $w_1$ is constant.

    We now introduce the main assumptions that we use throughout our analysis.
	\begin{enumerate}[font = \bfseries, label = A\arabic*., ref = \bf{A\arabic*}]
		\item\label{ass:omega} $\Omega \subset \R^d$, $d = 2,3$, is a bounded domain with $C^2$-boundary and outer normal $\n$, $T > 0$ is given, and $\gamma > 0$ is a fixed parameter.
		\item\label{ass:psi} The single-well potential $\Psi$ can be decomposed in a convex and a concave part as follows:
		\[
			\Psi(s) = \Psi_+(s) + \Psi_-(s) \quad \hbox{ for any $s\in [0,1)$}.
		\]
		We suppose that the singularity is contained in the convex part $\Psi_+$, and we assume that
		\begin{gather*}
			\Psi_- \in C^2([0,1]), \quad \Psi_+ \in C^2([0,1)),
			\quad \Psi''_+(s) > 0 \quad \hbox{for any $0 < s < 1$,}
			\quad \lim_{s \to 1} \Psi_+'(s) = + \infty.
		\end{gather*}
        Moreover, we assume that $\Psi_- \in C^2([0,1])$ can be extended to a smooth function on the full real line such that
		\[
			\Psi_- \in C^2(\R),
			\quad \Psi''_- \in L^\infty(\R),
            \quad \abs{\Psi_-'(s)} \le c_0 \abs{s} + c_0, \quad \hbox{for any $s \in \R$,}
		\]
        and
        \begin{equation}
            \label{ass:coerc}
            \Psi_-(s) \ge c_1 s^2 - c_2, \quad \hbox{for any $s \in \R$,}
        \end{equation}
        for some constants $c_0, c_1, c_2 > 0$.
        One may think of a quadratic extension with a double-well structure. Also, we can assume $\Psi_+$ to be non-negative in $[0,1)$ provided we fix a convenient $\kappa$ (see \eqref{eq:potdecomp}).
        \item\label{ass:psi_small}
		We additionally assume that $\delta$ is sufficiently small, that is,
		\[
			\delta < \delta_0 := \min \left\{ \frac{\gamma^2}{\norm{\Psi''_-}^2_\infty}, \frac12 \right\}.
		\]
		\item\label{ass:b} The degenerate mobility $b \in C^0([0,1])$ satisfies
		\[
			b(0) = b(1) = 0, \quad b(s) > 0 \quad \hbox{for 0 < s < 1,}
		\]
		as well as the compatibility condition
		\[
			b(\cdot) \Psi_+''(\cdot) \in C^0([0,1]).
		\]
        Moreover, we assume that there exists $\eps_0 \in (0,1/2)$ such that $b$ is non-decreasing in $[0,\eps_0]$ and non-increasing in $[1-\eps_0, 1]$. 
        Similarly, we also assume that $\Psi_+''$ is non-increasing in $[0,\eps_0]$ and non-decreasing in $[1-\eps_0,1)$.
		\item\label{ass:P} The proliferation function $P \in C^0([0,1])$ satisfies $P(s) \ge 0$ and the compatibility condition
		\[
			P(\cdot) \Psi_+'(\cdot) \in C^0([0,1]).
		\]
        Moreover, we assume that
        \[
            \sqrt{P(s)} \le c_3 b(s) \quad \hbox{for any $0 \le s \le \eps_0$ and $1-\eps_0\le s \le 1$,}
        \]
		for some constant $c_3 > 0$, where $\eps_0 \in (0,1/2)$ (see \ref{ass:b}).
		\item\label{ass:chi} Given the chemotaxis parameter $\chi \ge 0$, the constants $c_1$ and $c_2$ in \eqref{ass:coerc} are chosen such that $ c_1 > \chi^2$ (see \ref{ass:psi}).
		\item\label{ass:iniz} $\phi_0 \in \Lx 2$ with $0 \le \phi_0 < 1$ is such that $\Psi(\phi_0) \in \Lx 1$, and $\sigma_0 \in \Lx2$.
        \item\label{ass:entropy} We define the entropy density function $\eta:(0,1) \to \R$ as the solution to the following Cauchy problem
        \begin{equation*}
		\begin{cases}
			b(s) \eta''(s) = 1, \\
			\eta(1/2) = \eta'(1/2) = 0.
		\end{cases}
	    \end{equation*}
        Then, we assume that $\eta(\phi_0) \in \Lx 1$.
	\end{enumerate}

    \begin{remark}
        Regarding the smallness condition on the relaxation parameter $\delta$ given in hypothesis \ref{ass:psi_small}, we observe that the bound by 1/2 appears mainly for technical reasons. 
        However, it is not restrictive in practice.
        Indeed, the actual smallness condition is given by the first bound, since $\gamma$ is the interface parameter in the Cahn--Hilliard equation, which is usually taken very small in applications.
    \end{remark}


    \begin{remark}
    \label{rmk:Petaprime}
        By defining the entropy density function $\eta$ as in assumption
        \ref{ass:entropy}, we can see that hypotheses \ref{ass:b} and \ref{ass:P} imply the existence of two constants $c_4, c_5 > 0$ such that
        \begin{equation}
		  \label{ass:Peta}
		      \abs{\sqrt{P(s)} \eta'(s)} \le c_4 \abs{s} + c_5 \quad \hbox{for any $s \in [0,1]$.}
	    \end{equation}
       Indeed, the estimate is trivial if $s \in (\eps_0, 1 - \eps_0)$, due to the boundedness of $P$ and the non-degeneracy of $b$.
       For $s \in [1- \eps_0, 1)$, we proceed as follows:
       \begin{align*}
            \abs{\sqrt{P(s)} \eta'(s)}
            & \le \abs*{\sqrt{P(s)} \int_{1/2}^s \frac{1}{b(r)} \, \de r}
            \le \abs*{\sqrt{P(s)} \left(\int_{1/2}^{1-\eps_0} \frac{1}{b(r)} \, \de r + \int_{1-\eps_0}^{s} \frac{1}{b(r)} \, \de r\right)} \\
            & \le \norm{\sqrt{P}}_\infty C + c_3 b(s) \frac{s - (1-\eps_0)}{b(s)}
            \le c_3 \abs{s} + \bar{C}_1,
       \end{align*}
       where we used the fact that $b$ is non-increasing in $[1-\eps_0, 1]$ and \ref{ass:P}.
       For $s \in (0,\eps_0]$ instead, by similar arguments, we infer that
       \begin{align*}
            \abs{\sqrt{P(s)} \eta'(s)}
            & \le \abs*{\sqrt{P(s)} \int_s^{1/2} \frac{1}{b(r)} \, \de r}
            \le \abs*{\sqrt{P(s)} \left(\int_{s}^{\eps_0} \frac{1}{b(r)} \, \de r + \int_{\eps_0}^{1/2} \frac{1}{b(r)} \, \de r\right)} \\
            & \le c_3 b(s) \frac{\eps_0 - s}{b(s)} + \norm{\sqrt{P}}_\infty C
            \le c_3 \abs{s} + \bar{C}_2.
       \end{align*}
       This proves \eqref{ass:Peta} with $c_4 = c_3$ and $c_5 = \max\{\bar{C}_1, \bar{C}_2\}$.
    \end{remark}

    \noindent
    In what follows, we will extensively denote by $C > 0$ a positive constant depending only on the fixed quantities introduced in the hypotheses above.
    Such constants may change from line to line, depending on the context.
    We may use subscripts to highlight some particular dependencies.



    We now state the main results of the paper.
    Our first result is existence of a weak solution to the relaxed problem \eqref{eq:phi}--\eqref{eq:ic}.

    \begin{theorem}
        \label{thm:weaksols_delta}
        Let \ref{ass:omega}--\ref{ass:entropy} hold.
        Then, problem \eqref{eq:phi}--\eqref{eq:ic} admits a weak solution $(\phid, \mud, \sigmad)$, such that
        \begin{align*}
            & \phid \in \HT 1 {V^*} \cap \LT 2 V, \\
            & \mud \in \LT 2 V, \\
            & \phid - \dg \mud \in \HT 1 {V^*} \cap \LT \infty V \cap \LT 2 W, \\
            & \sigmad \in \HT 1 {V^*} \cap \LT \infty H \cap \LT 2 V, \\
            & 0 \le \phid \le 1 \quad \hbox{a.e.~in $Q_T$,}
        \end{align*}
        and that, for any $v \in V$ and almost everywhere in $(0,T)$,
        \begin{align}
            & \duality{\partial_t \phid, v}_V
            + (b(\phid) (\nabla \mud + \Psi_+''(\phid) \nabla \phid), \nabla v)_H \notag \\
            & \quad = \left( P(\phid) \left( \sigmad + \chi \left( 1 - \left( \phid - \dg \mud \right) \right) - (\mud + \Psi_+'(\phid)) \right) , v \right)_{\!\! H}
            \label{eq:varform:phi}\\
            & \delta (\nabla \mud, \nabla v)_H
            + (\mud, v)_H \notag \\
            & \quad = \gamma (\nabla \phid, \nabla v)_H
            + \left( \Psi'_- \left( \phid - \frac{\delta}{\gamma} \mud \right), v \right)_{\!\! H}
            - \chi (\sigmad, v)_H
            \label{eq:varform:mu} \\
            & \duality{\partial_t \sigmad, v}_V
            + \left( \nabla \left( \sigmad + \chi \left( 1 - \left(\phid - \frac{\delta}{\gamma} \mud \right) \right) \right), v \right)_{\!\! H} \notag \\
            & \quad = - \left( P(\phid) \left( \sigmad + \chi \left( 1 - \left( \phid - \frac{\delta}{\gamma} \mud \right) \right) - (\mud + \Psi_+'(\phid)) \right) , v \right)_{\!\! H}
            \label{eq:varform:sigma}
        \end{align}
       with initial conditions $\phid(0) = \phi_0$ and $\sigmad(0) = \sigma_0$.
    \end{theorem}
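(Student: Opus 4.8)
The plan is to construct a solution via a double approximation: first regularise the singular potential $\Psi_+$, the degenerate mobility $b$, and the degenerate proliferation $P$ by parameters $\eps > 0$, and then solve the resulting non-degenerate, smooth problem by a Faedo--Galerkin scheme in the basis $\{w_j\}$ of eigenfunctions of $\Ncal$ introduced above. For fixed $\delta$ and $\eps$, the Galerkin system is a system of ODEs with locally Lipschitz right-hand side (here one crucially uses that after regularisation $b_\eps$ is bounded below by a positive constant, so the elliptic equation \eqref{eq:mu} can be solved for $\mude$ as a smooth function of $\phide$ and $\sigmade$ via the isomorphism $-\delta\Delta + 1$, and substituted into the other two equations), so Cauchy--Lipschitz gives a local solution. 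The central analytical input is then a collection of a priori estimates that are uniform in the Galerkin index $n$ and in $\eps$ (and, where possible, in $\delta$), which simultaneously yield global existence and the compactness needed to pass to the limit.

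The estimates come in two families. The first is the \emph{energy estimate}: testing \eqref{eq:varform:phi} (at the Galerkin level) with $\mude + \Psi_{+,\eps}'(\phide)$, \eqref{eq:varform:mu} with $-\partial_t(\phide - \tfrac{\delta}{\gamma}\mude)$, and \eqref{eq:varform:sigma} with $\sigmade + \chi(1 - (\phide - \tfrac{\delta}{\gamma}\mude))$, and summing, the reaction terms cancel by the structure \eqref{eq:sources}, and one obtains a differential inequality for $\mathcal{E}_\delta$ of \eqref{eq:freeenergy} with dissipation controlling $b_\eps(\phide)|\nabla(\mude + \Psi_{+,\eps}'(\phide))|^2$ and $|\nabla(\sigmade + \chi(1 - (\phide - \tfrac{\delta}{\gamma}\mude)))|^2$. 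The coercivity \eqref{ass:coerc} and the condition $\chi^2 < c_1$ from \ref{ass:chi} make the energy bounded below and control $\|\sigmade\|_H$, $\|\phide - \tfrac{\delta}{\gamma}\mude\|_V$, and $\tfrac{\delta}{\gamma}\|\mude\|_H$; testing \eqref{eq:varform:mu} separately with $\mude$ then upgrades this to a full $\LT 2 V$ bound on $\mude$ and, via elliptic regularity for $-\delta\Delta + 1$, to the $\LT 2 W$ bound on $\phide - \tfrac{\delta}{\gamma}\mude$. The second is the \emph{entropy estimate}: testing \eqref{eq:varform:phi} with $\eta_\eps'(\phide)$ produces $\ddt \mathcal{S}(\phide)$ plus a term $\int b_\eps(\phide)\eta_\eps''(\phide)|\nabla\phide|^2 = \int |\nabla\phide|^2$ (using $b\eta'' = 1$), which is exactly the mechanism that yields the $\LT 2 V$ bound on $\phide$ itself; the coupling term $\int b_\eps(\phide)\Psi_{+,\eps}''(\phide)\nabla\phide\cdot\nabla\phide$ is handled by \ref{ass:b}, and the reaction term $\int P_\eps(\phide)(\cdots)\eta_\eps'(\phide)$ is controlled by Remark \ref{rmk:Petaprime}, i.e.\ estimate \eqref{ass:Peta}, together with $\sqrt{P_\eps} \le c_3 b_\eps$ near the pure phases. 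Time-derivative bounds on $\phide$ and $\sigmade$ in $\LT 2 {V^*}$ follow by comparison in the equations, using that $b_\eps(\phide)(\nabla\mude + \Psi_{+,\eps}''(\phide)\nabla\phide)$ is bounded in $\LT 2 H$ (a consequence of the energy dissipation term rewritten as $\sqrt{b_\eps}\cdot\sqrt{b_\eps}|\nabla(\cdots)|$) and that $P_\eps(\phide)(\cdots)$ is bounded in $L^1$ or better.

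With these bounds one extracts weakly/weakly-$*$ convergent subsequences; Aubin--Lions--Simon gives strong convergence of $\phide$ in $\LT 2 H$ (hence a.e., after a further subsequence) and of $\phide - \tfrac{\delta}{\gamma}\mude$ in $\LT 2 V$, which is what is needed to pass to the limit in the nonlinear terms. The a.e.\ convergence of $\phide$ together with $0 \le \phide \le 1$ (first established at the regularised level by a maximum-principle / truncation argument on the scheme, using the degeneracy of $b_\eps$ at the endpoints and the blow-up of $\Psi_{+,\eps}'$) passes to the limit to give $0 \le \phid \le 1$ a.e.; dominated convergence handles $\Psi_{+,\eps}'(\phide) \to \Psi_+'(\phid)$ and $b_\eps(\phide) \to b(\phid)$ in suitable spaces; $\Psi_-'$ is globally Lipschitz so $\Psi_-'(\phide - \tfrac{\delta}{\gamma}\mude)$ converges strongly; and the product terms $b_\eps(\phide)\nabla\mude$ and $b_\eps(\phide)\Psi_{+,\eps}''(\phide)\nabla\phide$ are passed to the limit by writing them as a weakly convergent factor times a strongly convergent one (the compatibility conditions $b\Psi_+'' \in C^0([0,1])$ and $P\Psi_+' \in C^0([0,1])$ in \ref{ass:b}--\ref{ass:P} are exactly designed to keep these products well behaved). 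The initial conditions are recovered from the $\HT 1 {V^*}$ bounds and weak continuity.

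I expect the main obstacle to be the identification of the limit of the nonlinear flux $b_\eps(\phide)(\nabla\mude + \Psi_{+,\eps}''(\phide)\nabla\phide)$ with $b(\phid)(\nabla\mud + \Psi_+''(\phid)\nabla\phid)$: since neither $\nabla\mude$ nor $\Psi_{+,\eps}''(\phide)\nabla\phide$ is controlled individually near $\phid \in \{0,1\}$, one must work with the combined quantity and exploit that $b_\eps$ vanishes there fast enough to tame the singularity — this is the standard delicate point in degenerate Cahn--Hilliard theory (as in \cite{EG1996, AACG2017}) and the reason the compatibility assumption $b\Psi_+'' \in C^0([0,1])$ is imposed. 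A secondary subtlety is ensuring all constants in the energy and entropy estimates are genuinely independent of $\eps$ (and tracking which are independent of $\delta$, since that is needed for Theorem \ref{thm:conv_deltato0}), which requires care in how the regularisations $b_\eps$, $\Psi_{+,\eps}$, $P_\eps$, $\eta_\eps$ are defined so that the identities $b\eta'' = 1$ and the inequalities of \ref{ass:P} and Remark \ref{rmk:Petaprime} survive the approximation.
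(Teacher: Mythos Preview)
Your overall architecture---regularise $(b,\Psi_+,P,\eta)$ by $\eps$, solve by Galerkin, derive energy and entropy estimates uniform in $n$ and $\eps$, pass to the limit via Aubin--Lions---matches the paper's proof. Two specific steps, however, are misdescribed in a way that would break the argument as written.

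\medskip
\textbf{The box constraint does \emph{not} hold at the regularised level.} You write that $0\le\phide\le 1$ is ``first established at the regularised level by a maximum-principle / truncation argument on the scheme, using the degeneracy of $b_\eps$ at the endpoints''. But $b_\eps$ is precisely the \emph{non}-degenerate truncation: $b_\eps(s)=b(\eps)>0$ for $s\le\eps$ and $b_\eps(s)=b(1-\eps)>0$ for $s\ge 1-\eps$. There is no maximum principle, and in general $\phide\notin[0,1]$. The paper recovers $0\le\phid\le1$ only \emph{after} sending $\eps\to0$, from the uniform entropy bound $\mathcal{S}_\eps(\phide(t))\le C$ combined with the pointwise lower bound $\eta_\eps(s)\ge\frac{1}{2b(1-\eps)}(s-1)^2$ for $s\ge1$ (and similarly near $0$), so that $\int_\Omega(\phide-1)_+^2\le Cb(1-\eps)\to0$. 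This also means your sentence ``dominated convergence handles $\Psi_{+,\eps}'(\phide)\to\Psi_+'(\phid)$'' is not right: $\Psi_+'(\phid)$ alone need not lie in any $L^p$. One must pass to the limit in the \emph{products} $b_\eps(\phide)\Psi_{+,\eps}''(\phide)$ and $P_\eps(\phide)\Psi_{+,\eps}'(\phide)$ using the compatibility hypotheses $b\Psi_+''\in C^0([0,1])$ and $P\Psi_+'\in C^0([0,1])$, together with the growth bounds that these compatibilities induce on the regularised products for $s\notin[\eps,1-\eps]$; you do mention the compatibilities later, but the argument needs to be routed through them from the start.

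\medskip
\textbf{The entropy estimate does not produce $\int|\nabla\phide|^2$ directly.} Testing the $\phi$-equation with $\eta_\eps'(\phide)$ and using $b_\eps\eta_\eps''=1$ gives
\[
\int_\Omega b_\eps(\phide)\nabla(\mude+\Psi_{+,\eps}'(\phide))\cdot\eta_\eps''(\phide)\nabla\phide
=\int_\Omega\nabla\mude\cdot\nabla\phide+\int_\Omega\Psi_{+,\eps}''(\phide)|\nabla\phide|^2,
\]
not $\int|\nabla\phide|^2$. The second term is merely nonnegative; the crucial information comes from the \emph{first}: writing $\nabla\phide=\nabla(\phide-\tfrac{\delta}{\gamma}\mude)+\tfrac{\delta}{\gamma}\nabla\mude$, integrating by parts, and substituting the $\mu$-equation in the form $\mude=-\gamma\Delta(\phide-\tfrac{\delta}{\gamma}\mude)+\Psi_-'(\cdot)-\chi\sigmade$ yields the uniform-in-$\eps$ bounds on $\|\Delta(\phide-\tfrac{\delta}{\gamma}\mude)\|_{L^2(0,T;H)}$ and $\sqrt{\delta}\|\nabla\mude\|_{L^2(0,T;H)}$. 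The $L^2(0,T;V)$ bound on $\phide$ then follows as a corollary (combining the latter with $\|\phide-\tfrac{\delta}{\gamma}\mude\|_{L^\infty(0,T;V)}$), and the $L^2(0,T;W)$ bound on $\phide-\tfrac{\delta}{\gamma}\mude$ comes from elliptic regularity. Your proposed route---get $\nabla\phide$ from entropy, then test \eqref{eq:varform:mu} with $\mude$ to get $\mude\in L^2(0,T;V)$---is circular, since that test requires $\nabla\phide$ on the right-hand side.
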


    \begin{remark}
    In general, due to the structure of the reaction terms, we cannot ensure that $\sigma_\delta \in  [0,1]$. However, testing both equations with 1 and adding them up, one can see that the total mass of $\phi_\delta + \sigma_\delta$ is conserved over time.
    \end{remark}

    \begin{remark}
    \label{rmk:initial_data}
        A comment on the initial datum $\mud(0)$ for the relaxed chemical potential is in order. 
        Indeed, such initial datum is not prescribed, but, by the regularities given above, one can easily deduce that $\phid - \dg \mud \in \CT 0 V$, meaning that $\phi_0 - \dg \mud(0) \in V$ should be well-defined. 
        Indeed, given $\phi_0 \in H$ and $\sigma_0 \in H$, $\phi_0 - \dg \mud(0) \in V$ can be determined as the unique weak solution to the following boundary value problem, which can be deduced from \eqref{eq:mu} at time $t=0$ by multiplying by $\delta/\gamma$ and by adding and subtracting $\phi_0$, namely
        \begin{align*}
            & - \delta \Delta \left(\phi_0 - \dg \mud(0) \right)
            + \left( \phi_0 - \dg \mud(0) \right)
            = \phi_0
            - \dg \Psi'_- \left(\phi_0 - \dg \mud(0)\right) + \dg \chi \sigma_0, \quad\hbox{ in }\Omega, \\
            & \partial_{\n} \left(\phi_0 - \dg \mud(0) \right) = 0, \quad \hbox{ on }\partial\Omega.
        \end{align*}
        Then, by the regularity hypotheses on $\Psi_-'$, one can uniquely find $\phi_0 - \dg \mud(0) \in V$, which then gives that $\mud(0) \in H$, since $\phi_0 \in H$.
        This also crucially implies that the initial energy
        \begin{align*}
    		\mathcal{E}_\delta(\phi_0, \sigma_0) & =
    		\int_\Omega \left( \Psi_+(\phi_0)
    		+ \frac{\gamma}{2} \abs*{\nabla \left(\phi_0 - \frac{\delta}{\gamma} \mud(0) \right)}^2
    		+ \frac{\delta}{2 \gamma} \abs{\mud(0)}^2
    		+ \Psi_-\left(\phi_0 - \frac{\delta}{\gamma} \mud(0) \right) \right) \, \de x \\
    		& \quad
    		+ \int_\Omega \left( \mezzo \abs{\sigma_0}^2
    		+ \chi \sigmad \left( 1 - \left( \phi_0 - \frac{\delta}{\gamma} \mud(0) \right) \right) \right)
    		\, \de x
	   \end{align*}
       is bounded under our hypotheses. It is worth pointing out that, in the relaxed problem, it is enough to assume that $\phi_0 \in H$. This is related to relaxation of the chemical potential which transforms the original Cahn--Hilliard equation into a second-order system.
       
    \end{remark}

    \begin{remark}
    \label{strict}
        If we additionally assume that the mobility $b$ degenerates fast enough at $1$ so that $\lim_{s \to 1^-} \eta(s) = + \infty$, we can also show that 
        \[
            \phid < 1 \quad \hbox{a.e.~in $Q_T$,}
        \]
        exactly as done in the proof of \cite[Theorem 5]{PP2021}, due to the boundedness of the entropy.
        In particular, we observe that this condition is satisfied if we take the standard degenerate mobility $b(s) = s(1-s)^2$, since in this case $\eta(s) \sim - \log(1 - s)$ as $s \to 1^-$.
    \end{remark}

    Our second result shows that the relaxed model \eqref{eq:phi}--\eqref{eq:ic} is a consistent approximation of model \eqref{eq:phi0}--\eqref{eq:ic0}.
    Namely, we prove that, as $\delta \to 0$, a weak solution to \eqref{eq:phi}--\eqref{eq:ic} given by Theorem \ref{thm:weaksols_delta} converges, up to a subsequence, to a suitable notion of solution to problem \eqref{eq:phi0}--\eqref{eq:ic0}.

    Let us introduce a convenient notion of weak solution to the original problem \eqref{eq:phi0}--\eqref{eq:ic0}, following the approach pioneered in \cite{EG1996} and later used also in \cite{AACG2017, PP2021}.
    The main issue is that, due to the degeneracy of the mobility, one is not able to control $\nabla \mu$ in $\LT 2 H$ from the energy estimate.
    Actually, one is only able to bound the flux
    \[
        \vec{J} := b(\phi) \nabla \mu = b(\phi) (\nabla (- \gamma \Delta \phi)
        + \Psi''(\phi) \nabla \phi
        - \chi \nabla \sigma )
    \]
    uniformly in $\LT 2 H$.
    Thus, the system has to be formally rewritten only in terms of the two main variables $\phi$ and $\sigma$, namely:
    \begin{alignat}{2}
		& \partial_t \phi
		- \div (b(\phi) (\nabla (- \gamma \Delta \phi)
        + \Psi''(\phi) \nabla \phi
        - \chi \nabla \sigma )) \notag \\
		& \quad = P(\phi)(\sigma + \chi (1 - \phi) + \gamma \Delta \phi - \Psi'(\phi) + \chi \sigma)
		\quad && \hbox{in $Q_T$,} \label{eq:phi00} \\
		& \partial_t \sigma -
		\Delta \left( \sigma + \chi (1 - \phi) \right)
		= - P(\phi)(\sigma + \chi (1 - \phi) + \gamma \Delta \phi - \Psi'(\phi) + \chi \sigma)
		\quad && \hbox{in $Q_T$,} \label{eq:sigma00} \\
		& \partial_{\n} \phi
		= b(\phi) \partial_{\n} (- \gamma \Delta \phi
		+ \Psi'(\phi)
		- \chi \sigma)
		= \partial_{\n} \sigma = 0
		\quad && \hbox{on $\Sigma_T$,} \label{eq:bc00} \\
		& \phi(0) = \phi_0,
		\quad \sigma(0) = \sigma_0
		\quad && \hbox{in $\Omega$.} \label{eq:ic00}
	\end{alignat}
    Then, by assuming the additional hypotheses
    \begin{enumerate}[font = \bfseries, label = A\arabic*., ref = \bf{A\arabic*}]
    \setcounter{enumi}{8}
        \item\label{ass:b_c1} $b \in C^1([0,1])$,
        \item\label{ass:iniz2} $\phi_0 \in V$,
    \end{enumerate}
    one can introduce the following notion of weak solution to problem \eqref{eq:phi00}--\eqref{eq:ic00}.

    \begin{definition}
        \label{def:weaksol_limit}
        Given $\phi_0$ and $\sigma_0$ satisfying \ref{ass:iniz} and \ref{ass:iniz2}, we say that the triple $(\phi, \vec{J}, \sigma)$ such that
        \begin{align*}
            & \phi \in \HT 1 {V^*} \cap \CT 0 V \cap \LT 2 {\Hx 2}, \\
            & \vec{J} \in \LT 2 H, \\
            & \sigma \in \HT 1 {V^*} \cap \CT 0 H \cap \LT 2 V,
        \end{align*}
        is a weak solution to \eqref{eq:phi00}--\eqref{eq:ic00} if the following variational formulation is satisfied for any $v \in V$, any $\vec{\xi} \in H^1(\Omega; \R^d) \cap L^\infty(\Omega; \R^d)$:
        \begin{align}
            & \duality{\partial_t \phi, v}_V
            + (\vec{J}, \nabla v)_H
            = (P(\phi)(\sigma + \chi (1 - \phi) + \gamma \Delta \phi - \Psi'(\phi) + \chi \sigma), v)_H, \label{eq:varform:phi00} \\
            & (\vec{J}, \vec{\xi})_H
            = - (\gamma \Delta \phi \, b'(\phi) \nabla \phi, \vec{\xi})_H
            - (\gamma b(\phi) \Delta \phi, \div \vec{\xi})_H \notag \\
            & \qquad + (b(\phi) \Psi''(\phi) \nabla \phi, \vec{\xi})_H
            - \chi (b(\phi) \nabla \sigma, \vec{\xi})_H,
            \label{eq:varform:J} \\
            & \duality{\partial_t \sigma, v}_V
            + (\nabla (\sigma + \chi (1 - \phi)), \nabla v)_H \notag \\
            & \qquad = - (P(\phi)(\sigma + \chi (1 - \phi) + \gamma \Delta \phi - \Psi'(\phi) + \chi \sigma), v)_H, \label{eq:varform:sigma00}
        \end{align}
    almost everywhere in $(0,T)$.
    \end{definition}

    \noindent
    We are now in a position to state our second main result.

    \begin{theorem}
    \label{thm:conv_deltato0}
       Let \ref{ass:omega}--\ref{ass:entropy} and \ref{ass:b_c1}--\ref{ass:iniz2} hold.
        For any $\delta > 0$, let $(\phid, \mud, \sigmad)$ be a weak solution to problem \eqref{eq:phi}--\eqref{eq:ic} in the sense of Theorem \emph{\ref{thm:weaksols_delta}}.
        Then, as $\delta \to 0$, one can extract a non-relabelled subsequence of $\{ (\phid, \mud, \sigmad) \}_{\delta > 0}$ such that
        \begin{align*}
            & \vec{J}_\delta = b(\phid) \nabla \mud + b(\phid)\Psi_+''(\phid) \nabla \phid \weak \vec{J} \quad \hbox{weakly in $\LT 2 H$,} \\
            & \mud \weak - \gamma \Delta \phi + \Psi_-'(\phi) - \chi \sigma \quad \hbox{weakly in $\LT 2 H$,} \\
            & \phid - \dg \mud \weak \phi \quad \hbox{weakly in $\LT 2 {\Hx2}$,} \\
            & \phid - \dg \mud \to \phi
            \quad \text{and} \quad
            \phid \to \phi
            \quad \hbox{strongly in $\LT 2 V$,} \\
            & \partial_t \phid \weak \partial_t \phi \quad \hbox{weakly in $\LT 2 {V^*}$,} \\
            & \sigmad \weak \sigma \quad \hbox{weakly in $\HT 1 {V^*} \cap \LT 2 V$,} \\
            & 0 \le \phi \le 1 \quad \hbox{a.e.~in $Q_T$,}
        \end{align*}
        and $(\phi, \vec{J}, \sigma)$ is a weak solution to problem \eqref{eq:phi00}--\eqref{eq:ic00} in the sense of Definition \emph{\ref{def:weaksol_limit}}.
    \end{theorem}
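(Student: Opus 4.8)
The plan is to pass to the limit $\delta \to 0$ in the variational formulation \eqref{eq:varform:phi}--\eqref{eq:varform:sigma}, using $\delta$-independent a priori estimates that follow from the energy and entropy structure. First I would derive the uniform bounds: testing the energy identity coming from \eqref{eq:phi}--\eqref{eq:sigma} (test \eqref{eq:varform:phi} by $\mud + \Psi_+'(\phid)$, \eqref{eq:varform:mu} by $-\partial_t(\phid - \dg\mud)$, \eqref{eq:varform:sigma} by $\sigmad + \chi(1-(\phid - \dg\mud))$, and sum, exploiting the cancellation built into the reaction terms \eqref{eq:sources}) yields, thanks to the coercivity \eqref{ass:coerc} and $\chi^2 < c_1$, a bound for $\mathcal{E}_\delta(\phid,\sigmad)$ uniform in $\delta$, hence
\[
\phid - \dg \mud \ \text{bounded in}\ \LT\infty V, \qquad
\frac{\sqrt{\delta}}{\sqrt\gamma}\,\mud \ \text{bounded in}\ \LT\infty H, \qquad
\sigmad \ \text{bounded in}\ \LT\infty H,
\]
together with $\vec{J}_\delta = b(\phid)\nabla\mud + b(\phid)\Psi_+''(\phid)\nabla\phid$ bounded in $\LT2 H$, $\nabla\sigmad$ bounded in $\LT2 H$, and the reaction term controlled via $\sqrt{P(\phid)}$ factors. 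Then the entropy estimate (test \eqref{eq:varform:phi} by $\eta'(\phid)$, using $b\eta'' = 1$) gives $\Delta(\phid - \dg\mud)$, equivalently $\phid$, bounded in $\LT2{\Hx2}$ uniformly in $\delta$; here Remark \ref{rmk:Petaprime} is what makes the reaction contribution to the entropy estimate $\delta$-uniform. Comparison in \eqref{eq:varform:phi} gives $\partial_t\phid$ bounded in $\LT2{V^*}$ (using $\vec{J}_\delta$ and the reaction bound), and comparison in \eqref{eq:varform:sigma} gives $\partial_t\sigmad$ bounded in $\LT2{V^*}$.

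Next, extract weakly/weakly-$*$ convergent subsequences for all these quantities, calling the limits $\vec{J}$, $\sigma$, $\phi$; from $\phid - \dg\mud$ bounded in $\LT2{\Hx2}$ and the $\LT\infty H$ bound on $\sqrt{\delta}\,\mud$ we get $\dg\mud \to 0$ in $\LT2 H$ (since $\dg\mud = \sqrt{\delta/\gamma}\cdot\sqrt{\delta/\gamma}\,\mud$ and $\sqrt{\delta/\gamma}\to 0$), hence $\phid - \dg\mud$ and $\phid$ share the same weak $\LT2{\Hx2}$ limit $\phi$, and $0 \le \phi \le 1$ a.e.\ passes through the weak limit. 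The Aubin--Lions lemma applied to $\phid - \dg\mud$ (bounded in $\LT2 W \cap \HT1{V^*}$, or rather in $\LT\infty V \cap W^{1,?}$ via $\partial_t(\phid-\dg\mud)$) upgrades this to strong convergence $\phid - \dg\mud \to \phi$ in $\LT2 V$, and therefore $\phid \to \phi$ in $\LT2 V$ too; along a further subsequence $\phid \to \phi$ a.e.\ in $Q_T$. Similarly Aubin--Lions for $\sigmad$ (bounded in $\LT2 V \cap \HT1{V^*}$) gives $\sigmad \to \sigma$ strongly in $\LT2 H$ and a.e. For the chemical potential: from \eqref{eq:varform:mu}, $\mud = -\gamma\Delta(\phid - \dg\mud) + \mud$... more directly, rewrite \eqref{eq:mu} as $\mud = -\gamma\Delta\phid + \Psi_-'(\phid - \dg\mud) - \chi\sigmad + \delta\Delta\mud$ and note the right side converges weakly in $\LT2 H$ (using $\Delta\phid \weak \Delta\phi$, $\Psi_-'(\phid - \dg\mud)\to\Psi_-'(\phi)$ by continuity and dominated convergence since $\Psi_-'$ has linear growth, $\sigmad\to\sigma$, and $\delta\Delta\mud = \delta\Delta(\phid-\dg\mud) + \delta\Delta(\dg\mud)$ with both terms $\to 0$), identifying $\mud \weak -\gamma\Delta\phi + \Psi_-'(\phi) - \chi\sigma$ in $\LT2 H$.

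The main obstacle is identifying the flux limit: proving that $\vec{J}$ equals the expression in \eqref{eq:varform:J}, i.e.\ $\vec{J} = -\gamma\,b'(\phi)\Delta\phi\,\nabla\phi - \gamma\,b(\phi)\Delta\phi\,\nabla\!(\cdot) + b(\phi)\Psi''(\phi)\nabla\phi - \chi\,b(\phi)\nabla\sigma$ in the integrated-by-parts sense. For this I would test $\vec{J}_\delta$ against $\vec\xi \in H^1\cap L^\infty$ and write $(\vec{J}_\delta,\vec\xi)_H = (b(\phid)\nabla\mud,\vec\xi)_H + (b(\phid)\Psi_+''(\phid)\nabla\phid,\vec\xi)_H$. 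The second term passes to the limit directly since $b(\phid)\Psi_+''(\phid)\to b(\phi)\Psi_+''(\phi)$ strongly (bounded continuous function composed with a.e.-convergent $\phid$, plus dominated convergence) while $\nabla\phid\weak\nabla\phi$ in $\LT2 H$ — the product of a strongly convergent $L^\infty$-bounded factor with a weakly convergent one converges. For the first term I substitute from \eqref{eq:mu}: $b(\phid)\nabla\mud = b(\phid)\nabla(-\gamma\Delta\phid + \Psi_-'(\phid - \dg\mud) - \chi\sigmad + \delta\Delta\mud)$, integrate by parts moving the outermost derivative onto $b(\phid)\vec\xi$ for the $-\gamma\Delta\phid$ term — producing exactly $-\gamma\,b'(\phid)\nabla\phid\,\Delta\phid$ and $-\gamma\,b(\phid)\Delta\phid\,\div\vec\xi$ terms (here $b\in C^1$ from \ref{ass:b_c1} and $\phi_0\in V$ from \ref{ass:iniz2} are essential), and for the remaining terms note $b(\phid)\Psi_-''(\phid-\dg\mud)\nabla(\phid - \dg\mud) \to b(\phi)\Psi_-''(\phi)\nabla\phi$ (strong times weak), $\chi\,b(\phid)\nabla\sigmad \weak \chi\,b(\phi)\nabla\sigma$, and the $\delta$-terms vanish because $b$ is bounded and $\delta\nabla\mud = \sqrt\delta\cdot\sqrt\delta\,\nabla\mud$ with $\sqrt\delta\,\nabla\mud$ bounded (from the $\LT\infty V$ bound on $\phid - \dg\mud$ one gets $\dg\nabla\mud = \nabla\phid - \nabla(\phid-\dg\mud)$ bounded in $\LT2 H$, hence $\delta\nabla\mud = \gamma\,\dg\,\dg\nabla\mud \to 0$). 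Combining, $(\vec J,\vec\xi)_H$ equals the claimed expression with $\Psi'' = \Psi_+'' + \Psi_-''$. The delicate point throughout is that all "strong $\times$ weak" products require the strongly convergent factor to be bounded in $L^\infty$, which holds for $b(\phid), b'(\phid), b(\phid)\Psi_\pm''(\phid)$ precisely by the compatibility conditions in \ref{ass:b} and $0\le\phid\le 1$; and the a.e.\ convergence $\phid\to\phi$ is what activates dominated convergence for each of these. Finally, passing to the limit in \eqref{eq:varform:phi} and \eqref{eq:varform:sigma} is routine given the above — the reaction terms converge because $P(\phid)\to P(\phi)$ and $P(\phid)\Psi_+'(\phid)\to P(\phi)\Psi'(\phi)$ strongly (compatibility \ref{ass:P}, a.e.\ convergence, dominated convergence), $\phid - \dg\mud \to\phi$, $\mud + \Psi_+'(\phid)$: here one must be slightly careful as $\Psi_+'(\phid)$ need not be bounded, but $P(\phid)\Psi_+'(\phid)$ is, so the product $P(\phid)(\mud + \Psi_+'(\phid)) = P(\phid)\mud + P(\phid)\Psi_+'(\phid)$ is handled term by term; $P(\phid)\mud$: since $\mud \weak -\gamma\Delta\phi+\Psi_-'(\phi)-\chi\sigma$ in $\LT2 H$ and $P(\phid)\to P(\phi)$ strongly in every $\Lqt q$, $q<\infty$ (bounded, a.e.\ convergent), the product converges in distributions to $P(\phi)(-\gamma\Delta\phi + \Psi_-'(\phi) - \chi\sigma) = P(\phi)(\mu - \chi\sigma)$... and with the chemotaxis correction one recovers exactly the reaction term in \eqref{eq:varform:phi00}. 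The initial conditions pass to the limit by the $\CT0{V^*}$ (resp.\ $\CT0{H}$... for $\sigma$, weak continuity) convergence. This completes the identification and the proof.
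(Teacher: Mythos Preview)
Your overall plan is sound and the limit-identification part (flux, reaction terms, Aubin--Lions) matches the paper's argument closely. The genuine gap is in how you obtain the $\delta$-uniform energy and entropy estimates. You propose to test \eqref{eq:varform:phi} directly with $\mud + \Psi_+'(\phid)$ and with $\eta'(\phid)$, but neither is an admissible test function at the $\delta$-level: Theorem~\ref{thm:weaksols_delta} only guarantees $0 \le \phid \le 1$ (not strict inequality) and $\phid \in \LT 2 V$, so $\Psi_+'(\phid)$ may fail to lie in $V$ (or even in $H$) where $\phid$ touches $1$, and likewise $\eta'$ is singular at $0$ and $1$. The paper avoids this by \emph{not} testing at the $\delta$-level at all: it returns to the $\eps$-regularised problem, where $\Psipe'$ and $\eta_\eps'$ are globally $C^1$ with linear growth, derives the energy and entropy inequalities there with $\eps$-independent constants, and then passes $\eps \to 0$ via weak lower semicontinuity to obtain \eqref{eq:energyineq_delta} and the bounds \eqref{eq:unifbounds_entropy_delta}. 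In particular the crucial bound $\norm{\vec J_\delta}_{\LT 2 H} \le C$ is obtained this way in \eqref{eq:bound_Jdelta}, as the $\liminf$ of the $\eps$-level dissipation $\norm{b_\eps}_\infty \int b_\eps(\phide)\abs{\nabla\mude + \Psipe''(\phide)\nabla\phide}^2$, not from a dissipation term written at the $\delta$-level (the paper explicitly remarks that the latter cannot be recovered without assuming $\sqrt{b}\,\Psi_+'' \in C^0$).

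A secondary issue: in identifying $\mud$ and $\vec J$ you write $\mud = -\gamma\Delta\phid + \Psi_-'(\phid-\dg\mud) - \chi\sigmad + \delta\Delta\mud$ and manipulate $b(\phid)\nabla(-\gamma\Delta\phid)$, but at the $\delta$-level neither $\Delta\phid$ nor $\Delta\mud$ exists separately in $H$ --- only $\Delta(\phid - \dg\mud) \in \LT 2 H$ and $\mud \in \LT 2 V$. The paper's rewriting \eqref{eq:muprime}, $\mud = -\gamma\Delta(\phid - \dg\mud) + \Psi_-'(\phid - \dg\mud) - \chi\sigmad$, keeps everything in terms of the well-defined quantity $\phid - \dg\mud$, and the weak formulation \eqref{eq:varform:Jdelta} of the flux is written accordingly. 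Finally, the bound on $\partial_t(\phid - \dg\mud)$ that you need for Aubin--Lions is also inherited from the approximation (derived at the Galerkin level via \eqref{eq:estimate_U_n} and passed through), not obtained by time-differentiating at the $\delta$-level.
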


    \begin{remark}
        The main ingredient for the proof of Theorem \ref{thm:conv_deltato0} is the validity of some uniform bounds on the solutions, provided by the energy and entropy structure of the relaxed system \eqref{eq:phi}--\eqref{eq:ic}. 
        Indeed, from the energy estimate we can deduce that, for almost any $t \in (0,T)$, 
        \begin{equation*}
            \begin{split}
            & \beta \int_\Omega \abs*{\phid(t) - \dg \mud(t)}^2 \de x
            + \mezzo \dg \int_\Omega \abs{\mud(t)}^2 \, \de x \\
            & \qquad
            + \frac{\gamma}{2} \int_\Omega \abs*{\nabla \left(\phid(t) - \dg \mud(t) \right)}^2 \,\de x
            + \alpha \int_\Omega \abs{\sigmad(t)}^2 \, \de x \\
            & \qquad
            + \int_0^t \int_\Omega \abs*{\nabla \left(\sigmad + \chi \left( 1 - \left(\phid - \dg \mud \right) \right) \right)}^2 \, \de x \, \de s
            \\
            & \qquad
            + \int_0^t \int_\Omega \left[ \sqrt{P(\phid)} \left( \sigmad + \chi \left( 1 - \left( \phid - \frac{\delta}{\gamma} \mud \right) \right) - (\mud + \Psi_+'(\phid)) \right) \right]^2 \, \de x \, \de s \\
            & \quad \le
            \mathcal{E}_{\delta}(\phi_0, \sigma_0) + C \abs{\Omega},
            \end{split}
        \end{equation*}
        for some $\alpha, \beta > 0$ and $C > 0$ depending only on the parameters of the system, but independent of $T$ and of $\delta$. In this case, assumption \ref{ass:iniz2} ensures that the initial energy $\mathcal{E}_0(\phi_0, \sigma_0)$ for $\delta=0$ is bounded, thus the right-hand side can be bounded uniformly in $\delta$ (see also Remark \ref{rmk:initial_data}).
        Additionally, from the entropy estimate we get that 
        \begin{align*}
            & \frac{\gamma}{2} \int_0^T \int_\Omega \abs*{ \Delta \left( \phid - \dg \mud \right)}^2 \, \de x \, \de t 
            + \dg \int_0^T \int_\Omega \abs{\nabla \mud}^2 \, \de x \, \de t \\
            & \quad \le \mathcal{S}(\phi_0) + C_1 \mathcal{E}_{\delta}(\phi_0, \sigma_0) + C_2,
        \end{align*}
        for some $C_1, C_2 > 0$ depending only on the parameters of the system, but independent of $\delta$.
        These estimates are rigorously derived in the sections below, starting from the corresponding ones for an approximated version of the relaxed system.
    \end{remark}

    \begin{remark}
        Also in this case, if we additionally assume that the mobility $b$ degenerates fast enough at $1$ so that $\lim_{s \to 1^-} \eta(s) = + \infty$, we can also show that 
        \[
            \phi < 1 \quad \hbox{a.e.~in $Q_T$,}
        \]
        using the argument recalled in Remark~\ref{strict}.
    \end{remark}

    \section{Approximating solutions}
    \label{apprsols}

    Here we construct approximating solutions to the relaxed problem \eqref{eq:phi}--\eqref{eq:ic}. More precisely,
    following \cite{EG1996}, we consider a regularised problem with non-degenerate mobility and regular potential.
    Then, we prove a corresponding existence result. The proof of Theorem  \ref{thm:weaksols_delta} will be obtained  in the next section by letting the regularisation parameter vanish.

    Let us fix a small positive parameter $0 < \eps \ll 1$ and define the regularised mobility as follows:
    \begin{equation}
        \label{eq:reg_mob}
        b_\eps(s) =
        \begin{cases}
            b(\eps) \quad & \hbox{for $s \le \eps$,} \\
            b(s) \quad & \hbox{for $\eps \le s \le 1 - \eps$,} \\
            b(1 - \eps) \quad & \hbox{for $s \ge 1 - \eps$.}
        \end{cases}
    \end{equation}
    Then, we can find constants $b_1^\eps, b_2^\eps > 0$ such that
    \begin{equation}
        \label{ass:b_eps}
        b_\eps \in C(\R; \R^+), \quad b_1^\eps \le b_\eps(s) \le b_2^\eps \quad \hbox{for any $s \in \R$.}
    \end{equation}
    To define a regular potential, instead, we smooth out the singularity at $1$, which only occurs in $\Psi_+$, but we also truncate near $0$ to preserve the validity of \ref{ass:b}.
    Indeed, by using a second-order Taylor expansion, we define:
    \begin{equation}
        \label{eq:reg_pot}
        \Psipe(s) =
        \begin{cases}
            \Psi_+(\eps) + \Psi_+'(\eps) (s - \eps) + \mezzo \Psi_+''(\eps) (s - \eps)^2 & \hbox{for $s \le \eps$,} \\
            \Psi_+(s) \quad & \hbox{for $\eps \le s \le 1 - \eps$,} \\
            \Psi_+(1 - \eps) + \Psi_+'(1 - \eps) (s - 1 + \eps) + \mezzo \Psi_+''(1 - \eps) (s - 1 + \eps)^2 & \hbox{for $s \ge 1 - \eps$,}
        \end{cases}
    \end{equation}
    which implies that
    \begin{equation}
        \label{eq:reg_pot_second}
        \Psipe''(s) =
        \begin{cases}
            \Psi_+''(\eps) \quad & \hbox{if $s \le \eps$,} \\
            \Psi_+''(s) \quad & \hbox{if $\eps \le s \le 1 - \eps$,} \\
            \Psi_+''(1 - \eps) \quad & \hbox{if $s \ge 1 - \eps$.}
        \end{cases}
    \end{equation}
    Then, by \ref{ass:psi}, the regularised potential $\Psipe$ satisfies $\Psipe \in C^2(\R)$ and
    \begin{equation}
        \label{ass:Psi_eps}
        \Psipe(s) \ge  - \kappa,
        \quad \abs{\Psipe'(s)} \le c_\eps (1 + \abs{s}),
        \quad \Psipe''(s) \ge \bar{c}_\eps,
        \quad \hbox{for any $s \in \R$,}
    \end{equation}
    where $\kappa, c_\eps, \bar{c}_\eps > 0$ are positive constants, with $\kappa$ independent of $\eps$.
    Additionally, since $\Psi_+$ is convex and $\Psipe$ is defined through its quadratic approximation at $\eps$ and $1 - \eps$, when $\eps$ is small enough, we also have
    \[
        \Psipe(s) \le \Psi_+(s) \quad \hbox{for any $s \in [0,1]$.}
    \]
    Indeed, this is trivial for $s \in (\eps, 1-\eps)$. Then, by performing a Taylor expansion with Lagrange remainder centred in $\eps$, we have that, for any $s \in [0,\eps]$
    \[
        \Psi_+(s) = \Psi_+(\eps) + \Psi_+'(\eps) (s - \eps) + \mezzo \Psi_+''(\theta) (s - \eps)^2,
    \]
    for some $\theta \in (0,\eps)$. 
    Hence, if $\eps < \eps_0$, \ref{ass:b} entails that $\Psi_+''(\theta) \ge \Psi_+''(\eps)$, therefore it follows that 
    \[
        \Psi_+(s) \ge \Psi_+(\eps) + \Psi_+'(\eps) (s - \eps) + \mezzo \Psi_+''(1-\eps) (s - \eps)^2 = \Psipe(s),
    \]
    for any $s \in [0,\eps]$.
    By reasoning in a similar way, it follows also for $s \in [1-\eps, 1)$.
    At the same time, we also extend the concave part $\Psi_-$ to all $\R$ (see \ref{ass:psi}) and we define $\Psi_\eps = \Psipe + \Psi_-$ on the whole real line.
    Then, by our particular choices, the compatibility condition in \ref{ass:b} is also satisfied at the approximate level for any $\eps$ small enough, namely
    \begin{equation}
        \label{ass:bpsi_eps}
        b_\eps(\cdot) \Psi''_\eps(\cdot) \in C^0(\R),
        \quad \text{with} \quad
        \norm{b_\eps \Psi''_\eps}_{L^\infty(\R)} \le \norm{b \Psi''}_{L^\infty([0,1])}.
    \end{equation}
    Moreover, it is easy to see from \eqref{eq:reg_pot} that
    \begin{equation}
        \label{ass:iniz_eps_psi}
        \norm{\Psi_\eps(\phi_0)}_{\Lx 1} \le \norm{\Psi(\phi_0)}_{\Lx 1} + C \left( \norm{\phi_0}^2_H + 1 \right).
    \end{equation}
    Thus, thanks to \ref{ass:iniz}, we have $\Psi_\eps(\phi_0) \in \Lx 1$.
    For compatibility reasons with the approximate mobility $b_\eps$, we also need to regularise the proliferation function $P_\eps$ as follows:
    \begin{equation}
        \label{eq:reg_prol}
        P_\eps(s) =
        \begin{cases}
            P(\eps) \quad & \hbox{for $s \le \eps$,} \\
            P(s) \quad & \hbox{for $\eps \le s \le 1 - \eps$,} \\
            P(1 - \eps) \quad & \hbox{for $s \ge 1 - \eps$.}
        \end{cases}
    \end{equation}
    A similar approach was also used in \cite[Section 4]{FLR2017}.
    Then, the compatibility condition in \ref{ass:P} is still satisfied at the approximate level, namely $P_\eps(\cdot) \Psi'_\eps(\cdot) \in C^0(\R)$.
    Finally, we introduce the approximate entropy density function $\eta_\eps:\R \to \R$ as the solution to the following Cauchy problem
    \begin{equation}
        \label{eq:reg_entr}
		\begin{cases}
			b_\eps(s) \eta_\eps''(s) = 1, \\
			\eta_\eps(1/2) = \eta_\eps'(1/2) = 0.
		\end{cases}
	\end{equation}
    As $b_\eps$ satisfies \eqref{ass:b_eps}, $\eta_\eps \in C^2(\R)$ is well-defined by \eqref{eq:reg_entr}
    Moreover, using \eqref{eq:reg_mob}, one can see that $\eta_\eps$ has the form:
    \begin{equation}
    \label{eq:reg_entr_expl}
        \eta_\eps(s) =
        \begin{cases}
            \eta(\eps) + \eta'(\eps) (s - \eps) + \mezzo \eta''(\eps) (s - \eps)^2 & \hbox{for $s \le \eps$,} \\
            \eta(s) \quad & \hbox{for $\eps \le s \le 1 - \eps$,} \\
            \eta(1 - \eps) + \eta'(1 - \eps) (s - 1 + \eps) + \mezzo \eta''(1 - \eps) (s - 1 + \eps)^2 & \hbox{for $s \ge 1 - \eps$.}
        \end{cases}
    \end{equation}
    Then, since by \ref{ass:b}, $\eta'' = 1/b$ is non-increasing in $(0,\eps_0]$ and non-decreasing in $[1 - \eps_0, 1)$, when $\eps < \eps_0$, by a simple Taylor expansion argument, as done above with $\Psi_+$, one can see that
    \begin{equation}
        \label{ass:iniz_eps_eta}
        \eta_\eps(s) \le \eta(s) \quad \hbox{for any $s \in (0,1)$},
    \end{equation}
    so that $\norm{\eta_\eps(\phi_0)}_{\Lx 1} \le \norm{\eta(\phi_0)}_{\Lx 1}$.
    This implies that $\eta_\eps(\phi_0) \in \Lx 1$, if \ref{ass:entropy} holds.
    To conclude, we also mention that, for $\eps < \eps_0$, the argument in Remark \ref{rmk:Petaprime} can be repeated to deduce that
    \begin{equation}
        \label{ass:Peta_eps}
        \abs{\sqrt{P_\eps(s)} \eta_\eps'(s)} \le c_4 \abs{s} + c_5
        \quad \hbox{for any $s \in \R$,}
    \end{equation}
    where $c_4, c_5$ are the same constants appearing in \eqref{ass:Peta} and they are independent of $\eps$.

    We can now introduce the following approximating problem:
    \begin{alignat}{2}
		& \partial_t \phide
		- \div (b_\eps(\phide) \nabla (\mude + \Psipe'(\phide))) \notag \\
		& \quad = P_\eps(\phide) \left( \sigmade + \chi \left( 1 - \left( \phide - \frac{\delta}{\gamma} \mude \right) \right) - (\mude + \Psipe'(\phide)) \right)
        \quad && \hbox{in $Q_T$,} \label{eq:phieps} \\
		& - \delta \Delta \mude + \mude
		= - \gamma \Delta \phide
		+ \Psi'_- \left( \phide - \frac{\delta}{\gamma} \mude \right)
		- \chi \sigmade
        \quad && \hbox{in $Q_T$,} \label{eq:mueps} \\
		& \partial_t \sigmade -
		\Delta \left( \sigmade + \chi \left( 1 - \left(\phide - \frac{\delta}{\gamma} \mude \right) \right) \right) \notag \\
		& \quad = - P_\eps(\phide) \left( \sigmade + \chi \left( 1 - \left( \phide - \frac{\delta}{\gamma} \mude \right) \right) - (\mude + \Psipe'(\phide)) \right)
        \quad && \hbox{in $Q_T$,} \label{eq:sigmaeps} \\
		& \partial_{\n} \left( \phide - \frac{\delta}{\gamma} \mude \right)
		= b_\eps(\phid) \partial_{\n} (\mude + \Psipe'(\phide))
		= \partial_{\n} \sigmade = 0
		\quad && \hbox{on $\Sigma_T$,} \label{eq:bceps} \\
		& \phide(0) = \phi_0,
		\quad \sigmade(0) = \sigma_0
		\quad && \hbox{in $\Omega$.} \label{eq:iceps}
	\end{alignat}
    The existence of a weak solution to \eqref{eq:phieps}--\eqref{eq:iceps} is given by

    \begin{proposition}
        \label{prop:weaksols_eps}
        Let \ref{ass:omega}--\ref{ass:entropy} be satisfied and let $b_\eps$, $\Psipe$, $P_\eps$ and $\eta_\eps$ be defined  by \eqref{eq:reg_mob}, \eqref{eq:reg_pot}, \eqref{eq:reg_prol}, \eqref{eq:reg_entr}, respectively.
        Then, problem \eqref{eq:phieps}--\eqref{eq:iceps} admits a weak solution, that is, a triplet $(\phide, \mude, \sigmade)$ such that
        \begin{align*}
            & \phide \in \HT 1 {V^*} \cap \LT 2 V, \\
            & \mude \in \LT 2 V, \\
            & \phide - \dg \mude \in \HT 1 {V^*} \cap \LT \infty V \cap \LT 2 W, \\
            & \sigmade \in \HT 1 {V^*} \cap \LT \infty H \cap \LT 2 V,
        \end{align*}
        which satisfies, for any $v \in V$ and almost everywhere in $(0,T)$, the following identities
        \begin{align}
            & \duality{\partial_t \phide, v}_V
            + (b_\eps(\phide) \nabla (\mude + \Psipe'(\phide)), \nabla v)_H \notag \\
            & \quad = \left( P_\eps(\phide) \left( \sigmade + \chi \left( 1 - \left( \phide - \frac{\delta}{\gamma} \mude \right) \right) - (\mude + \Psipe'(\phide)) \right) , v \right)_{\!\! H}
            \label{eq:varform:phi_eps}\\
            & \delta (\nabla \mude, \nabla v)_H
            + (\mude, v)_H \notag \\
            & \quad = \gamma (\nabla \phide, \nabla v)_H
            + \left( \Psi'_- \left( \phide - \frac{\delta}{\gamma} \mude \right), v \right)_{\!\! H}
            - \chi (\sigmade, v)_H
            \label{eq:varform:mu_eps} \\
            & \duality{\partial_t \sigmade, v}_V
            + \left( \nabla \left( \sigmade + \chi \left( 1 - \left(\phide - \frac{\delta}{\gamma} \mude \right) \right) \right), v \right)_{\!\! H} \notag \\
            & \quad = - \left( P_\eps(\phide) \left( \sigmade + \chi \left( 1 - \left( \phide - \frac{\delta}{\gamma} \mude \right) \right) - (\mude + \Psipe'(\phide)) \right) , v \right)_{\!\! H}
            \label{eq:varform:sigma_eps}
        \end{align}
        with initial conditions $\phide(0) = \phi_0$ and $\sigmade(0) = \sigma_0$ in $H$.
    \end{proposition}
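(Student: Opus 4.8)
The plan is to solve the regularised system \eqref{eq:phieps}--\eqref{eq:iceps} by a Faedo--Galerkin approximation in the eigenbasis $\{w_j\}_{j\in\N}$ of the operator $\Ncal$, and then pass to the limit using uniform a~priori estimates coming from the energy and entropy structures (which, crucially, survive at this level thanks to \eqref{ass:b_eps}, \eqref{ass:Psi_eps}, \eqref{ass:bpsi_eps} and \eqref{ass:Peta_eps}). First I would introduce the finite-dimensional spaces $V_n := \Span\{w_1,\dots,w_n\}$ and look for $\phiden(t), \muden(t), \sigmaden(t)\in V_n$ solving the projected version of \eqref{eq:varform:phi_eps}--\eqref{eq:varform:sigma_eps}. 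The elliptic equation \eqref{eq:varform:mu_eps} is the delicate point of the discretisation: for fixed $\phiden$ (and $\sigmaden$) it is a nonlinear elliptic problem for $\muden$ because of the term $\Psi'_-(\phiden-\dg\muden)$; however, since $\delta<\delta_0\le\gamma^2/\norm{\Psi''_-}_\infty^2$, the map $v\mapsto \delta(\nabla v,\nabla\cdot)_H+(v,\cdot)_H-(\Psi'_-(\phiden-\dg v),\cdot)_H$ is strongly monotone and Lipschitz on $V_n$ (the perturbation has Lipschitz constant $\tfrac{\delta}{\gamma}\norm{\Psi''_-}_\infty<\gamma\le 1$ when tested against $v$), so $\muden$ is uniquely determined and depends smoothly (in fact Lipschitz-continuously) on $(\phiden,\sigmaden)$. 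Substituting this back, the remaining two equations become a system of ODEs with locally Lipschitz right-hand side (using $b_\eps,\Psipe',P_\eps$ smooth and bounded on the relevant range, and $b_\eps$ bounded below), so Cauchy--Lipschitz gives a unique local solution on $[0,T_n)$.

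Next I would derive the two fundamental energy-type estimates at the Galerkin level, uniform in $n$. For the \textbf{energy estimate}, test \eqref{eq:varform:phi_eps} with $\muden+\Psipe'(\phiden)$, test the time-derivative of \eqref{eq:varform:mu_eps} appropriately (or rather use \eqref{eq:varform:mu_eps} to rewrite $\gamma\nabla\phiden$), and test \eqref{eq:varform:sigma_eps} with $\sigmaden+\chi(1-(\phiden-\dg\muden))$; adding, the reaction terms combine into the nonnegative dissipation $\int[\sqrt{P_\eps}(\cdots)]^2$ exactly because of the structure \eqref{eq:sources}, and the chemotaxis cross-terms are absorbed using \ref{ass:chi} ($\chi^2<c_1$) together with the coercivity \eqref{ass:coerc} of $\Psi_-$. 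This yields, after a Gronwall argument, uniform bounds on $\phiden-\dg\muden$ in $\LT\infty V$, on $\muden$ in $\LT 2 H$ (via the $\tfrac{\delta}{2\gamma}\abs{\muden}^2$ term and the dissipation), on $\sigmaden$ in $\LT\infty H\cap\LT 2 V$, and on $\sqrt{P_\eps(\phiden)}(\sigmaden+\chi(\cdots)-\muden-\Psipe'(\phiden))$ in $\LT 2 H$; combined with \eqref{eq:varform:mu_eps} tested with $\muden$ one also controls $\nabla\muden$ in $\LT 2 H$ so that $\muden\in\LT 2 V$. For the \textbf{entropy estimate}, test \eqref{eq:varform:phi_eps} with $\eta_\eps'(\phiden)$ (note $b_\eps\eta_\eps''=1$ kills the mobility degeneracy): the principal term produces $\int\abs{\nabla\mude+\Psipe''\nabla\phide}\cdot\nabla\phide$ which, rewritten via \eqref{eq:varform:mu_eps}, gives control of $\tfrac{\gamma}{2}\abs{\Delta(\phiden-\dg\muden)}^2$ and $\dg\abs{\nabla\muden}^2$; the reaction term is handled with \eqref{ass:Peta_eps} and \eqref{ass:iniz_eps_eta}. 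This gives the uniform bound on $\phiden-\dg\muden$ in $\LT 2 W$ and in particular (via elliptic regularity for $\Ncal$) identifies $\muden\in\LT 2 V$ quantitatively.

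With these bounds I would close the argument in the standard way: comparison in the equations gives $\partial_t\phiden$ and $\partial_t\sigmaden$ bounded in $\LT 2 {V^*}$, hence by Aubin--Lions--Simon $\phiden\to\phide$ and $\sigmaden\to\sigmade$ strongly in $\LT 2 H$ (and a.e.), $\phiden-\dg\muden\to\phide-\dg\mude$ strongly in $\CT 0 H\cap\LT 2 V$, and the weak limits $\muden\weak\mude$ in $\LT 2 V$, $b_\eps(\phiden)\nabla(\muden+\Psipe'(\phiden))\weak$ something in $\LT 2 H$. The a.e.\ convergence plus continuity and boundedness of $b_\eps$, $\Psipe'$, $\Psi_-'$, $P_\eps$ (all defined and globally controlled on $\R$ by the regularisation) let me identify all the nonlinear limits by dominated convergence — here the key points are that $b_\eps(\phiden)\nabla\phiden\to b_\eps(\phide)\nabla\phide$ weakly in $\LT 2 H$ (product of an a.e.-convergent bounded factor with a weakly convergent one, using the strong $\LT 2 V$ convergence just obtained for $\phide-\dg\mude$ together with the relation $\phiden=(\phiden-\dg\muden)+\dg\muden$ and $\muden\weak\mude$ in $\LT2 V$), and similarly for $b_\eps(\phiden)\Psipe''(\phiden)\nabla\phiden$ using \eqref{ass:bpsi_eps}. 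Passing to the limit in the Galerkin identities (test functions $v\in V_m$, $m$ fixed, then density) yields \eqref{eq:varform:phi_eps}--\eqref{eq:varform:sigma_eps}; the initial conditions pass thanks to the $\CT 0 {H}$/$\CT 0 {V^*}$ continuity. The main obstacle I anticipate is the consistent treatment of the coupled elliptic equation \eqref{eq:varform:mu_eps} throughout: one must make sure the smallness of $\delta$ is used uniformly (so that $\muden$ is always well-defined and the algebraic substitution $\muden=\muden[\phiden,\sigmaden]$ has bounds independent of $n$), and that no derivative of $\muden$ is lost when converting between the energy and entropy testings — the bookkeeping of the term $\phiden-\dg\muden$ as a single unknown (as the paper does in its function-space statements) is what makes this clean.
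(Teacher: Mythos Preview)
Your overall architecture (Galerkin in the $\Ncal$-eigenbasis, energy/entropy estimates, Aubin--Lions, identification of nonlinear limits) matches the paper's, but there is one genuine gap: you propose to run the \emph{entropy} estimate at the Galerkin level by testing \eqref{eq:varform:phi_eps} with $\eta_\eps'(\phiden)$. In the discretised problem the variational identity only holds for $v\in W_n$, and $\eta_\eps'(\phiden)\notin W_n$ in general; if you project and use $\Pi^n(\eta_\eps'(\phiden))$ instead, the key cancellation $b_\eps(\phiden)\eta_\eps''(\phiden)=1$ is lost because $\nabla\Pi^n(\eta_\eps'(\phiden))\neq\eta_\eps''(\phiden)\nabla\phiden$. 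So the entropy computation, as written, does not close at the Galerkin level, and with it you lose your route to $\nabla\phiden\in\LT 2 H$, $\nabla\muden\in\LT 2 H$ and $\phiden-\dg\muden\in\LT 2 W$.

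The paper circumvents this by exploiting the $\eps$-regularisation itself: since $\Psipe''\ge\bar c_\eps>0$ and $\Psipe''$ is bounded, a purely algebraic ``gradient estimate'' (factoring $\nabla(\phiden+\dg\Psipe'(\phiden))$ and using $\norm{\nabla(v-\Pi^n v)}_H\le\norm{\nabla v}_H$ for the eigenbasis projection) yields $\norm{\nabla\phiden}_{\LT 2 H}\le C_{T,\eps}$ and then $\sqrt{\delta}\norm{\nabla\muden}_{\LT 2 H}\le C_{T,\eps}$ --- constants that depend on $\eps$ but not on $n$, which is all that Proposition~\ref{prop:weaksols_eps} requires. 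The $\LT 2 W$ bound on $\phiden-\dg\muden$ is then obtained by testing the rewritten $\mu$-equation \eqref{eq:mueps_prime} with $v=-\Delta(\phiden-\dg\muden)$, which \emph{is} admissible in $W_n$; this gives a $\delta$-dependent bound, again sufficient here. The genuine entropy estimate is performed only \emph{after} the $n\to\infty$ limit (in the proof of Theorem~\ref{thm:weaksols_delta}), where $\eta_\eps'(\phide)\in\LT 2 V$ is a legitimate test function. Two smaller points in the same spirit: the paper discretises the flux and the reaction with $\Pi^n(\Psipe'(\phiden))$ rather than $\Psipe'(\phiden)$ (so that the energy test function $\muden+\Pi^n(\Psipe'(\phiden))$ lies in $W_n$), and it obtains the bound on $\partial_t(\phiden-\dg\muden)$ by differentiating \eqref{eq:mueps_prime} in time and testing the resulting elliptic equation for $U:=\partial_t(\phiden-\dg\muden)$ with $\Ncal^{-1}U$, using \ref{ass:psi_small}; you should make this step explicit since it is where the smallness of $\delta$ is actually used.
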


    \begin{proof}
        We use an approach based on a Faedo--Galerkin discretisation, as similarly done in \cite{PP2021} for the relaxed Cahn--Hilliard model (see also \cite{AACG2017,ABCG2025} and \cite{FLR2017} for the nonlocal version).
        Hence, we consider the eigenfunctions $\{w_j\}_{j \in \N} \subset W$ of the Neumann--Laplace operator $\Ncal$ introduced in Section \ref{sec:hps_res}, and we define the spaces $W_n := \Span \{ w_1, \dots, w_n \}$ for any $n \in \N$.
        It is well-known that $\bigcup_{n \in \N} W_n$ is dense in $W$, $V$ and $H$.
        We also call $\Pi^n: H \to W_n$ the $\Lx 2$-projection operator onto $W_n$ for any $n \in \N$.
        Since the spaces $W_n$ are generated by the eigenfunctions of $\Ncal$, the $L^2(\Omega)$-projection operator $\Pi^n$ onto $W_n$ has the important property that it coincides with the $H^1(\Omega)$-projection operator onto $W_n$, as one can easily verify by direct calculation.
        Then, for any $n \in \N$, we look for a  $\phiden, \muden, \sigmaden \in \CT 1 {W_n}$ of the form
        \begin{equation}
            \label{def:discr_sols}
            \phiden(t) := \sum_{j=1}^n a_j^n(t) w_j, \quad
            \muden(t) := \sum_{j=1}^n c_j^n(t) w_j, \quad
            \sigmaden(t) := \sum_{j=1}^n d_j^n(t) w_j,
        \end{equation}
        where $a_j^n, c_j^n, d_j^n: [0,T] \to \R$ for any $j = 1, \dots, n$. Such functions are determined by solving the following discretised system for any test function $v \in W_n$ and for almost any $t \in [0,T]$:
        \begin{align}
            & ({\phiden}'(t), v)_H
            + (b_\eps(\phiden(t)) \nabla (\muden(t) + \Pi^n(\Psipe'(\phiden(t)))), \nabla v)_H \notag \\
            & \quad = (P_\eps(\phiden(t)) R^n_{\delta, \eps} (\phiden(t), \muden(t), \sigmaden(t)), v)_H
            \label{eq:varform:phi_eps_n}\\
            & \delta (\nabla \muden(t), \nabla v)_H
            + (\muden(t), v)_H \notag \\
            & \quad = \gamma (\nabla \phiden(t), \nabla v)_H
            + \left( \Psi'_- \left( \phiden(t) - \frac{\delta}{\gamma} \muden(t) \right), v \right)_{\!\! H}
            - \chi (\sigmaden(t), v)_H
            \label{eq:varform:mu_eps_n} \\
            & ({\sigmaden}'(t), v)_H
            + \left( \nabla \left( \sigmaden(t) + \chi \left( 1 - \left(\phiden(t) - \frac{\delta}{\gamma} \muden(t) \right) \right) \right), v \right)_{\!\! H} \notag \\
            & \quad = - (P_\eps(\phiden(t)) R^n_{\delta, \eps} (\phiden(t), \muden(t), \sigmaden(t)), v)_H
            \label{eq:varform:sigma_eps_n} \\
            & \phiden(0) = \Pi^n(\phi_0), \quad \sigmaden(0) = \Pi^n(\sigma_0)
            \label{eq:varform:ic_eps_n}
        \end{align}
        where
        \begin{align*}
            & R^n_{\delta, \eps}(\phiden(t), \muden(t), \sigmaden(t)) \\
            & \quad := \left( \sigmaden(t) + \chi \left( 1 - \left( \phiden(t) - \frac{\delta}{\gamma} \muden(t) \right) \right) - (\muden(t) + \Pi^n(\Psipe'(\phiden(t)))) \right).
        \end{align*}
        It suffices to solve \eqref{eq:varform:phi_eps_n}--\eqref{eq:varform:ic_eps_n} only for $v=w_j$, $j=1,\dots,n$.
        Then, by inserting the expressions \eqref{def:discr_sols} in \eqref{eq:varform:phi_eps_n}--\eqref{eq:varform:ic_eps_n} with $v = w_j$, $j=1,\dots,n$, and by computing the $H$-inner products, one can easily see that \eqref{eq:varform:phi_eps_n}--\eqref{eq:varform:ic_eps_n} becomes a system of $3n$ nonlinear ODEs in the unknowns $a_j^n, c_j^n, d_j^n: [0,T] \to \R$.
        Consequently, owing to the continuity of $b_\eps$, $\Psipe$, $\Psi_-$ and $P_\eps$, by the Cauchy--Peano Theorem there exist local solutions $a_j^n, c_j^n, d_j^n \in C^1([0, t_n))$, $j = 1, \dots, n$, for some $t_n \in [0,T]$.
        Then, the local solutions $\phiden, \muden, \sigmaden \in C^1([0,t_n);{W_n})$ to \eqref{eq:varform:phi_eps_n}--\eqref{eq:varform:ic_eps_n} are well-defined.

        We now obtain some \emph{a priori} estimates on $\phiden, \muden, \sigmaden$, with constants independent of $n$, that will allow us to extend the solution to $[0,T]$ and then pass to the limit as $n \to \infty$.
        To do this, it is useful to formally rewrite \eqref{eq:mueps} as
        \begin{equation}
            \label{eq:mueps_prime}
            \mude = - \gamma \Delta \left( \phide - \dg \mude \right) + \Psi'_- \left( \phide - \dg \mude \right) - \chi \sigmade,
        \end{equation}
        which, in terms of the discrete variational formulation above, becomes
        \begin{equation}
            \label{eq:varform:mu_eps_n_prime}
            \begin{split}
            (\muden(t), v)_H
            & = \gamma \left( \nabla \left( \phiden(t) - \dg \muden(t) \right), \nabla v \right)_{\!\! H} \\
            & \quad + \left( \Psi'_-\left( \phiden(t) - \dg \muden(t) \right) - \chi \sigmaden(t), v \right)_{\!\! H},
            \end{split}
        \end{equation}
        for any $v \in W_n$ and for any $t \in (0,t_n)$.

        \textsc{Energy estimate.}
        To get the main energy estimate, for any given $t \in (0,t_n)$, we take $v = \muden + \Pi^n(\Psipe'(\phiden))$ in \eqref{eq:varform:phi_eps_n}, $v = - \partial_t \left( \phiden - \dg \muden \right)$ in \eqref{eq:varform:mu_eps_n_prime} and $v = \sigmaden + \chi \left( 1 - \left(\phiden - \dg \muden \right) \right)$ in \eqref{eq:varform:sigma_eps_n}.
        We stress that all the above test functions are allowed, since their expressions make sense in $W_n$ for any $t \in (0,t_n)$.
        Then, by summing up the equations, after some cancellations, we obtain that
        \begin{equation}
		\label{eq:energyest_eps_n}
		    \begin{split}
    		& \ddt \Bigg( \int_\Omega \left( \Psipe(\phiden) + \Psi_-\left(\phiden - \dg \muden \right) \right) \, \de x
    		+ \mezzo \dg \int_\Omega \abs{\muden}^2 \, \de x \\
    		& \qquad
            + \frac{\gamma}{2} \int_\Omega \abs*{\nabla \left(\phiden - \dg \muden \right)}^2 \,\de x
    		+ \mezzo \int_\Omega \abs{\sigmaden}^2 \, \de x
    		- \int_\Omega \chi \sigmaden \left( 1 - \left(\phiden - \dg \muden \right) \right) \, \de x \Bigg) \\
    		& \quad
    		+ \int_\Omega b_\eps(\phiden) \abs*{\nabla (\muden + \Pi^n(\Psipe'(\phiden)))}^2 \, \de x
    		+ \int_\Omega \abs*{\nabla \left(\sigmaden +  \chi \left( 1 - \left(\phiden - \dg \muden \right) \right) \right)}^2 \, \de x
    		\\
    		& \quad
    		+ \int_\Omega \left[ \sqrt{P_\eps(\phiden)} \left( \sigmaden + \chi \left( 1 - \left( \phiden - \frac{\delta}{\gamma} \muden \right) \right) - (\muden + \Pi^n(\Psipe'(\phiden))) \right) \right]^2 \, \de x
    		= 0,
    		\end{split}
	   \end{equation}
       where the term differentiated with respect to time is exactly the free energy $\mathcal{E}_\delta(\phiden, \sigmaden)$, with $\Psi_+$ replaced by $\Psipe$ (see \eqref{eq:freeenergy}).
        Differently from \cite{PP2021}, due to the chemotaxis and the reaction terms, we need to have some additional coercivity properties on the free energy.
        In fact, we invoke \eqref{ass:coerc} to recall that
        \[
            \Psi_-\left(\phiden - \frac{\delta}{\gamma} \muden \right) \ge c_1 \left(\phiden - \frac{\delta}{\gamma} \muden \right)^2 - c_2 \quad \hbox{in $Q_T$.}
        \]
        Then, by the Cauchy--Schwarz and the Young inequalities, we can estimate the chemotactic term from below as
        \begin{align*}
            & - \int_\Omega \chi \sigmaden \left( 1 - \left(\phiden - \dg \muden \right) \right) \, \de x \\
            & \quad \ge
            - \left( \frac12 - \alpha \right) \int_\Omega \abs{\sigmaden}^2 \, \de x
            - \frac{\chi^2}{2 - 4\alpha} \int_\Omega \left( 1 - \left( \phiden - \dg \muden \right) \right)^2 \, \de x\\
            & \quad \ge
            - \left( \frac12 - \alpha \right) \int_\Omega \abs{\sigmaden}^2 \, \de x
            - \frac{\chi^2}{1 - 2 \alpha} \abs{\Omega}
            - \frac{\chi^2}{1 - 2\alpha} \int_\Omega \left( \phiden - \dg \muden \right)^2 \, \de x,
        \end{align*}
        for any $\alpha \in (0, 1/2)$.
        Therefore, with hypothesis \ref{ass:chi}, one can find $\alpha \in (0, 1/2)$ such that
        \[
            \beta := c_1 - \frac{\chi^2}{1 - 2\alpha} > 0.
        \]
        Hence, we get
        \begin{align*}
            \mathcal{E}_{\delta, \eps}(\phiden, \sigmaden) & \ge
            \beta \int_\Omega \abs*{\phiden - \dg \muden}^2 \de x
            + \mezzo \dg \int_\Omega \abs{\muden}^2 \, \de x \\
            & \quad
            + \frac{\gamma}{2} \int_\Omega \abs*{\nabla \left(\phiden - \dg \muden \right)}^2 \,\de x
            + \alpha \int_\Omega \abs{\sigmaden}^2 \, \de x - \left( c_2 + \frac{\chi^2}{1 - 2 \alpha} \right) \abs{\Omega},
        \end{align*}
        where both $\alpha$ and $\beta$ are positive constants independent of $n$, $\eps$, $\delta$ and $T$.
        Thus, integrating \eqref{eq:energyest_eps_n} in $(0,t)$, for any given $t \in (0,t_n)$, and using the above lower bound on the free energy, we deduce that
        \begin{equation}
            \label{eq:energyineq_eps_n}
            \begin{split}
            & \beta \int_\Omega \abs*{\phiden(t) - \dg \muden(t)}^2 \de x
            + \mezzo \dg \int_\Omega \abs{\muden(t)}^2 \, \de x \\
            & \qquad
            + \frac{\gamma}{2} \int_\Omega \abs*{\nabla \left(\phiden(t) - \dg \muden(t) \right)}^2 \,\de x
            + \alpha \int_\Omega \abs{\sigmaden(t)}^2 \, \de x \\
            & \qquad
            + \int_0^t \int_\Omega b_\eps(\phiden) \abs*{\nabla (\muden + \Pi^n(\Psipe'(\phiden)))}^2 \, \de x \, \de s \\
            & \qquad
            + \int_0^t \int_\Omega \abs*{\nabla \left(\sigmaden + \chi \left( 1 - \left(\phiden - \dg \muden \right) \right) \right)}^2 \, \de x \, \de s
            \\
            & \qquad
            + \int_0^t \int_\Omega \left[ \sqrt{P_\eps(\phiden)} \left( \sigmaden + \chi \left( 1 - \left( \phiden - \frac{\delta}{\gamma} \muden \right) \right) - (\muden + \Pi^n((\Psipe'(\phiden))) \right) \right]^2 \, \de x \, \de s \\
            & \quad \le
            \mathcal{E}_{\delta, \eps}(\Pi^n\phi_0, \Pi^n\sigma_0) + C \abs{\Omega},
            \end{split}
        \end{equation}
        where $C>0$ is a constant independent of $n$, $\eps$, $\delta$ and $T$.
        We now comment on the term regarding the initial energy in the discretised initial data, namely,
        \begin{align*}
            & \mathcal{E}_{\delta, \eps}(\Pi^n\phi_0, \Pi^n\sigma_0) =
            \int_\Omega \left( \Psipe(\Pi^n \phi_0) + \Psi_-\left(\Pi^n \phi_0 - \dg \muden(0) \right) \right) \, \de x \\
    		& \quad
            + \mezzo \dg \int_\Omega \abs{\muden(0)}^2 \, \de x
            + \frac{\gamma}{2} \int_\Omega \abs*{\nabla \left(\Pi^n \phi_0 - \dg \muden(0) \right)}^2 \,\de x \\
    		& \quad
            + \mezzo \int_\Omega \abs{\Pi^n \sigma_0}^2 \, \de x
    		- \int_\Omega \chi \Pi^n \sigma_0 \left( 1 - \left(\Pi^n \sigma_0 - \dg \muden(0) \right) \right) \, \de x.
        \end{align*}
        First of all, we observe that, if the initial data satisfy \ref{ass:iniz}, then by Remark \ref{rmk:initial_data} the initial energy $\mathcal{E}_\delta(\phi_0, \sigma_0)$ is bounded.
        Next, we aim to replicate the argument of Remark \ref{rmk:initial_data} at the discrete level to find $\muden(0) \in W_n$ through the variational formulation \eqref{eq:varform:mu_eps_n_prime} in $W_n$, starting from $\Pi^n \phi_0$ and $\Pi^n \sigma_0$.
        Indeed, given $\Pi^n \phi_0 \in W_n$ and $\Pi^n \sigma_0 \in W_n$, $\Pi^n \phi_0 - \delta \muden(0) \in W_n$ can be determined as the unique solution to the following discretised elliptic system, which can be deduced from \eqref{eq:varform:mu_eps_n_prime} at time $t=0$ by multiplying by $\delta/\gamma$ and by adding and subtracting $\Pi^n \phi_0$, namely
        \begin{align*}
            & \delta \left( \nabla \left(\Pi^n \phi_0 - \dg \muden(0) \right), \nabla v \right)_{\!\! H}
            + \left( \left( \Pi^n \phi_0 - \dg \muden(0) \right), v \right)_{\!\! H} \\
            & \quad = (\Pi^n \phi_0, v)_H
            - \dg \left( \Psi'_- \left(\Pi^n \phi_0 - \dg \muden(0)\right), v \right)_{\!\! H} + \dg \chi (\Pi^n \sigma_0, v)_H,
        \end{align*}
        for any $v \in W_n$.
        Then, one uniquely finds $\Pi^n \phi_0 - \dg \muden(0) \in W_n$, and, since $\Pi^n \phi_0 \in W_n$, deduces that $\muden(0) \in W_n$.
        By the properties of $\Pi^n$, it follows that $\Pi^n \phi_0 \to \phi_0$ in $H$, $\Pi^n \sigma_0 \to \sigma_0$ in $H$, $\muden(0) = \Pi^n \mude(0) \to \mude(0)$ in $H$ and $\Pi^n \phi_0 - \dg \muden(0) = \Pi^n \left( \phi_0 - \dg \mude(0) \right) \to \phi_0 - \dg \mude(0)$ in $V$, where $\mude(0)$ is the corresponding initial data for the regularised chemical potential, found as in Remark \ref{rmk:initial_data}.
        In particular, the last convergence follows from the fact that $\Pi^n \phi_0 - \dg \muden(0)$ is the solution to the elliptic equation above, together with the convergences $\Pi^n \phi_0 \to \phi_0$ and $\Pi^n \sigma_0 \to \sigma_0$ in $H$.
        Moreover, by \eqref{eq:reg_pot}, $\Psipe$ has at most quadratic growth, so there exists $C_\eps > 0$ such that
        \[
            \norm{\Psipe(\Pi^n \phi_0)}_{\Lx 1}
            \le C_\eps (\norm{\Pi^n \phi_0}^2_H + 1)
            \le C_\eps (\norm{\phi_0}^2_H + 1),
        \]
        owing also to the convergence $\Pi^n \phi_0 \to \phi_0$ in $H$.
        By the continuity of $\Psipe$ and the almost everywhere convergence $\Pi^n \phi_0 \to \phi_0$ (up to a non-relabelled subsequence), we also have that $\Psipe(\Pi^n \phi_0) \to \Psipe(\phi_0)$ almost everywhere in $\Omega$.
        Then, by the Dominated Convergence Theorem, it follows that $\Psipe(\Pi^n \phi_0) \to \Psipe(\phi_0)$ in $\Lx 1$.
        A similar argument also provides the convergence $\Psi_- \left( \Pi^n\phi_0 - \dg \muden(0) \right) \to \Psi_-\left( \phi_0 - \dg \mude(0) \right)$ in $\Lx 1$, as $\Psi_-$ also has at most quadratic growth.
        Consequently, by the convergences above, we deduce that
        \begin{equation}
        \label{eq:conv_initialenergy_n}
            \mathcal{E}_{\delta, \eps}(\Pi^n \phi_0, \Pi^n \sigma_0) \to \mathcal{E}_{\delta, \eps}(\phi_0, \sigma_0) \quad \hbox{as $n \to \infty$.}
        \end{equation}
        Moreover, since $\Psipe \le \Psi_+$ in $[0,1]$ by convexity and its definition \eqref{eq:reg_pot}, and $0 \le \phi_0 < 1$ almost everywhere in $\Omega$ with $\Psi_+(\phi_0) \in \Lx 1$ by \ref{ass:iniz}, we can bound
        \[
            \mathcal{E}_{\delta, \eps}(\phi_0, \sigma_0) \le \mathcal{E}_{\delta}(\phi_0, \sigma_0),
        \]
        for $\eps$ small enough.
        This means that the right-hand side of \eqref{eq:energyineq_eps_n} can be replaced by a uniform bound depending only on $\mathcal{E}_\delta(\phi_0, \sigma_0)$ and the structural parameters of the system, but independent of $n$, $\eps$ and $T$. This has two important consequences.
        First, the local solutions $\phiden, \muden, \sigmaden \in C^1([0, t_n); W_n)$ can actually be extended to global solutions $\phiden, \muden, \sigmaden \in C^1([0, T]; W_n)$ by a standard continuation argument.
        Then, \eqref{eq:energyineq_eps_n} provides the following uniform estimates:
        \begin{equation}
            \label{eq:unifbounds_eps_n}
            \begin{split}
                & \norm*{\phiden - \dg \muden}_{\LT \infty V} \le C, \\
                & \sqrt{\delta} \norm{\muden}_{\LT \infty H} \le C, \\
                & \norm{\sigmaden}_{\LT \infty H} \le C, \\
                & \norm*{\sqrt{b_\eps(\phiden)} \nabla (\muden + \Pi^n(\Psipe'(\phiden)))}_{\LT 2 H} \le C, \\
                & \norm*{\nabla \left( \sigmaden + \chi \left( 1 - \left(\phiden - \dg \muden \right) \right) \right)}_{\LT 2 H} \le C, \\
                & \norm*{\sqrt{P_\eps(\phiden)} \, R^n_{\delta, \eps}(\phiden, \muden, \sigmaden)}_{\LT 2 H} \le C,
            \end{split}
        \end{equation}
        with  $C > 0$ independent of $n$, $\eps$ and $T$, and depending on $\delta$ only through $\mathcal{E}_\delta(\phi_0, \sigma_0)$.
        From the first and fifth bound in \eqref{eq:unifbounds_eps_n} it also follows that
        \begin{equation}
            \label{eq:bound_sigma_l2v_n}
            \norm{\sigmaden}_{\LT 2 V} \le C.
        \end{equation}

        \textsc{Gradient estimate on $\phiden$.}
        Following the ideas used in the proof of \cite[Theorem 1]{PP2021}, we now deduce a uniform estimate on $\nabla \phiden$.
        First, by factoring some terms and recalling that, for the regularised potential, $\nabla \Psipe'(\phiden) = \Psipe''(\phiden) \nabla \phiden$ by the chain rule, we observe that
        \begin{align*}
            & \int_\Omega \abs*{\nabla \left(\phiden + \dg \Psipe'(\phiden) \right)}^2 \, \de x
            = \int_\Omega \, \abs*{\nabla \Psipe'(\phiden) \left( \frac{1}{\Psipe''(\phiden)} + \dg \right)}^2 \, \de x \\
            & \quad
            \ge \inf_{s \in \R} \left( \frac{1}{\Psipe''(s)} + \dg \right)^2 \int_\Omega \abs{\nabla \Psipe'(\phiden)}^2 \, \de x \\
            & \quad
            \ge \left( \theta_\eps^2 + \left( \dg \right)^2  \right) \int_\Omega \abs{\nabla \Psipe'(\phiden)}^2 \, \de x,
        \end{align*}
        for some $\theta_\eps > 0$.
        Here, we crucially used the fact that, by \eqref{eq:reg_pot}--\eqref{ass:Psi_eps}, for any fixed $\eps > 0$ the regularised potential $\Psipe$ is a strictly convex function with bounded second derivative.
        Next, reading the inequality in the opposite direction, we add and subtract some terms to obtain that
        \begin{align*}
            & \left( \theta_\eps^2 + \left( \dg \right)^2  \right) \int_\Omega \abs{\nabla \Psipe'(\phiden)}^2 \, \de x
            \le \int_\Omega \abs*{\nabla \left(\phiden + \dg \Psipe'(\phiden) \right)}^2 \, \de x \\
            & \quad \le \int_\Omega \abs*{\nabla \left( \phiden - \dg \muden \right)}^2 \, \de x
            + \left( \dg \right)^2 \int_\Omega \abs*{\nabla \left( \muden + \Pi^n (\Psipe'(\phiden)) \right)}^2 \, \de x \\
            & \qquad
            + \left( \dg \right)^2 \int_\Omega \abs*{\nabla \left( \Psipe'(\phiden) - \Pi^n (\Psipe'(\phiden)) \right)}^2 \, \de x \\
            & \quad \le \int_\Omega \abs*{\nabla \left( \phiden - \dg \muden \right)}^2 \, \de x
            + \left( \dg \right)^2 \int_\Omega \abs*{\nabla \left( \muden + \Pi^n (\Psipe'(\phiden)) \right)}^2 \, \de x \\
            & \qquad
            + \left( \dg \right)^2 \int_\Omega \abs{\nabla \Psipe'(\phiden)}^2 \, \de x,
        \end{align*}
        where the last inequality follows from the properties of the $L^2$ projection operator $\Pi^n$ onto the discrete spaces $W_n$.
        Indeed, such projection operators not only satisfy the standard inequality $\norm{v - \Pi^n v}_H \le \norm{v}_H$ for any $v \in H$, but also $\norm{\nabla(v - \Pi^n v)}_H \le \norm{\nabla v}_H$ for any $v \in V$ due to the fact that $W_n$ is generated by eigenvectors of the operator $\mathcal{N}$.
        Therefore, by cancelling the equal terms on both sides and integrating on $(0,T)$, we get that
        \begin{align*}
            & \theta_\eps^2 \int_0^T \int_\Omega \abs{\nabla \Psipe'(\phiden)}^2 \, \de x \, \de t \\
            & \quad \le
            \int_0^T \int_\Omega \abs*{\nabla \left( \phiden - \dg \muden \right)}^2 \, \de x \, \de t
            + \left( \dg \right)^2 \int_0^T \int_\Omega \abs*{\nabla \left( \muden + \Pi^n (\Psipe'(\phiden)) \right)}^2 \, \de x \, \de t,
        \end{align*}
        which, owing to the uniform bounds \eqref{eq:unifbounds_eps_n} and to the fact that the approximated mobility $b_\eps$ is bounded below (cf. \eqref{ass:b_eps}), imply that
        \begin{equation*}
            \label{eq:bound_Psipe_n}
            \theta_\eps^2 \int_0^T \int_\Omega \abs{\nabla \Psipe'(\phiden)}^2 \, \de x \, \de t \le C_T.      \end{equation*}
        Here $C_T > 0$ may depend also on $T$, since on $\phiden - \dg \muden$ is bounded in the stronger space $\LT \infty V$.
        Consequently, by computing again the gradient and exploiting the regularity of $\Psipe$, as well as the non-degeneracy of its second derivative (cf. \eqref{ass:Psi_eps}), we deduce that
        \begin{align*}
            \theta_\eps^2 \bar{c}_\eps \int_0^T \int_\Omega \abs{\nabla \phiden}^2 \, \de x \, \de t
            \le \theta_\eps^2 \left( \inf_{s \in \R} \Psipe''(s) \right) \int_0^T \int_\Omega \abs{\nabla \phiden}^2 \, \de x \, \de t
            \le C_T.
        \end{align*}
        Hence, we get the uniform bound
        \begin{equation}
        \label{eq:bound_nablaphi_n}
            \norm{\nabla \phiden}_{\LT 2 H} \le C_{T,\eps}.
        \end{equation}
        Moreover, by comparing \eqref{eq:bound_nablaphi_n} with the first uniform estimate in \eqref{eq:unifbounds_eps_n}, we also infer that
        \begin{equation}
        \label{eq:bound_nablamu_n}
            \sqrt{\delta} \norm{\nabla \muden}_{\LT 2 H} \le C_{T, \eps}.
        \end{equation}

        \textsc{Mean value estimate.}
        We now aim to find some uniform estimate on the mean value of $\phiden$. 
        Hence, we choose the constant function $v = 1/\abs{\Omega}$ in \eqref{eq:varform:phi_eps_n}, which is possible due to the fact that the first eigenfunction of $\Ncal$ is constant.
        This yields
        \[
            \partial_t \overline{\phiden} = \frac{1}{\abs{\Omega}} \int_\Omega P_\eps(\phiden) R^n_{\delta,\eps}(\phiden, \muden, \sigmaden) \, \de x.
        \]
        Then, by integrating on $(0,t)$, for any $t \in (0,T)$, and employing the Cauchy--Schwarz and the Young  inequalities, we deduce that
        \begin{align*}
            \overline{\phiden}(t) &
            = \overline{\Pi^n \phi_0}
            + \frac{1}{\abs{\Omega}} \int_0^t \int_\Omega \sqrt{P_\eps(\phiden)} \sqrt{P_\eps(\phiden)} R^n_{\delta,\eps}(\phiden, \muden, \sigmaden) \, \de x \\
            & \le \norm{\phi_0}_{\Lx 1}
            + \frac{1}{2 \abs{\Omega}} \int_0^T \int_\Omega P_\eps(\phiden) \, \de x \, \de t \\
            & \quad + \frac{1}{2 \abs{\Omega}} \int_0^T \int_\Omega P_\eps(\phiden) R^n_{\delta,\eps}(\phiden, \muden, \sigmaden)^2 \, \de x \, \de t \\
            & \le \norm{\phi_0}_{\Lx 1} + \norm{P_\eps}_\infty T + C
            \le C_T,
        \end{align*}
        owing to \ref{ass:iniz}, the uniform boundedness of $P_\eps$ (cf. \eqref{eq:reg_prol}) and the last bound in \eqref{eq:unifbounds_eps_n}.
        Consequently, we deduce that
        \begin{equation}
            \label{eq:bound_phi_mean_n}
            \norm{\overline{\phiden}}_{\Lt \infty} \le C_T.
        \end{equation}
        By the Poincar\'e--Wirtinger inequality, \eqref{eq:bound_nablaphi_n} and \eqref{eq:bound_phi_mean_n} also imply that
        \begin{equation}
            \label{eq:bound_phi_l2v_n}
            \norm{\phiden}_{\LT 2 V} \le C_{T, \eps}.
        \end{equation}

        \textsc{A higher order estimate.}
        We now choose $v = - \Delta \left( \phiden - \dg \muden \right)$ in \eqref{eq:varform:mu_eps_n_prime}, which is possible since $\phiden$ and $\muden$ are written in terms of eigenvectors of $\Ncal$.
        Then, by integrating by parts and applying the Cauchy--Schwarz and the Young inequalities, as well as hypothesis \ref{ass:psi} on $\Psi_-'$, we obtain that
        \begin{align*}
            & \gamma \int_\Omega \abs*{\Delta \left(\phiden - \dg \muden \right)}^2 \, \de x \\
            & \quad = \int_\Omega \muden \, \Delta \left(\phiden - \dg \muden \right) \, \de x
            + \int_\Omega \Psi_-'\left(\phiden - \dg \muden \right) \, \Delta \left(\phiden - \dg \muden \right) \, \de x \\
            & \qquad - \chi \int_\Omega \sigmaden \, \Delta \left(\phiden - \dg \muden \right) \, \de x \\
            & \quad \le \frac{\gamma}{2} \int_\Omega \abs*{\Delta \left(\phiden - \dg \muden \right)}^2 \, \de x
            + C \int_\Omega \abs{\muden}^2 \, \de x \\
            & \qquad + C \int_\Omega \abs*{\phiden - \dg \muden}^2 \, \de x
            + C \int_\Omega \abs{\sigmaden}^2_H \, \de x + C.
        \end{align*}
        Then, by integrating on $(0,T)$ and recalling the uniform bounds \eqref{eq:unifbounds_eps_n}, we deduce that
        \[
            \norm*{\Delta \left(\phiden - \dg \muden \right)}_{\LT 2 H} \le C_\delta,
        \]
        which, by elliptic regularity theory and \eqref{eq:bceps}, readily implies
        \begin{equation}
            \label{eq:bound_phidgmu_h2_n}
            \norm*{\phiden - \dg \muden}_{\LT 2 {\Hx 2}} \le C_\delta.
        \end{equation}
        Note that the constant $C_\delta > 0$ now depends on $\delta$ since the second bound in \eqref{eq:unifbounds_eps_n} was used.

        \textsc{Estimates on the time derivatives.}
        Lastly, in order to use compactness theorems, we need some uniform estimates on the time derivatives $\partial_t \phiden$ and $\partial_t \sigmaden$, as well as $\partial_t \left(\phiden - \dg \muden \right)$, in the dual space $V^*$.
        Hence, for any test function $v \in V$, we decompose $v = v_1 + v_2$ with $v_1 \in W_n$ and $v_2 \in W_n^\perp$.
        Then, owing to the fact that $\partial_t \phiden \in W_n$ for almost any $t \in (0,T)$, we can insert $v_1$ in \eqref{eq:varform:phi_eps_n} and employ H\"older's inequality and Sobolev embeddings to obtain
    	\begin{align*}
    		& \duality{\partial_t \phiden, v}_V
            = \duality{\partial_t \phiden, v_1}_V \\
    		& \quad =
    		\int_\Omega b_\eps(\phiden) \nabla (\muden + \Pi^n(\Psipe'(\phiden))) \cdot \nabla v_1 \, \de x
    		+ \int_\Omega P_\eps(\phiden) R^n_{\delta,\eps}(\phiden, \muden, \sigmaden) \, v_1 \, \de x \\
    		& \quad \le \norm*{\sqrt{b_\eps(\phiden)}}_{\Lx\infty} \norm*{\sqrt{b_\eps(\phiden)} \nabla (\muden + \Pi^n(\Psipe'(\phiden)))}_H \norm{\nabla v_1}_H \\
    		& \qquad + \norm*{\sqrt{P_\eps(\phiden)}}_{\Lx3} \norm*{\sqrt{P_\eps(\phiden)} R^n_{\delta, \eps}(\phiden, \muden, \sigmaden)}_H \norm{v_1}_{\Lx 6} \\
    		& \quad \le C \left( \norm*{\sqrt{b_\eps(\phiden)} \nabla (\muden + \Pi^n(\Psipe'(\phiden)))}_H + \norm*{\sqrt{P_\eps(\phiden)} R^n_{\delta, \eps}(\phiden, \muden, \sigmaden)}_H  \right) \norm{v}_V,
    	\end{align*}
    	since both $b_\eps$ and $P_\eps$ are globally bounded functions belonging to $C^0(\R)$ uniformly in $\eps$.
    	Then, we deduce that
    	\begin{align*}
    		& \int_0^T \norm{\partial_t \phiden}^2_{\LT 2 {V^*}} \, \de t
            \le C \bigg( \int_0^T \norm*{\sqrt{b_\eps(\phiden)} \nabla (\muden + \Pi^n(\Psipe'(\phiden)))}^2_H \, \de t \\
            & \qquad + \int_0^T \norm*{\sqrt{P_\eps(\phiden)} R^n_{\delta, \eps}(\phiden, \muden, \sigmaden)}^2_H \, \de t \bigg),
    	\end{align*}
    	which, by the uniform bounds \eqref{eq:unifbounds_eps_n}, immediately yields
    	\begin{equation}
    		\label{eq:bound_dtphi_n}
    			\norm{\phiden}_{\HT 1 {V^*}} \le C,
    	\end{equation}
    	uniformly in $\delta$, $\eps$, $n$ and $T$.
    	Arguing similarly and testing \eqref{eq:varform:sigma_eps_n} with any $v \in V$, one can also deduce that
        \begin{align*}
    		& \int_0^T \norm{\partial_t \sigmaden}^2_{\LT 2 {V^*}} \, \de t
            \le C \Bigg( \int_0^T \norm*{\nabla \left( \sigmaden + \chi \left( 1 - \left( \phiden - \dg \muden \right) \right) \right)}^2_H \, \de t \\
            & \qquad + \int_0^T \norm*{\sqrt{P_\eps(\phiden)} R^n_{\delta, \eps}(\phiden, \muden, \sigmaden)}^2_H \, \de t \Bigg),
    	\end{align*}
        which, again by \eqref{eq:unifbounds_eps_n}, implies that
    	\begin{equation}
    		\label{eq:bound_dtsigma_n}
    			\norm{\sigmaden}_{\HT 1 {V^*}} \le C,
    	\end{equation}
        uniformly in $\delta$, $\eps$, $n$ and $T$.

        Finally, we can also perform an additional estimate on $\partial_t (\phiden - \dg \muden)$, as in \cite{PP2021}.
        Here we use a more rigorous approach with respect to that of \cite{PP2021}, which leads to changing the smallness assumption in \ref{ass:psi_small}.
    	To do this, we recall \eqref{eq:mueps_prime} and,  multiplying by $\dg$ and adding $\phiden$ on both sides, we get
    	\begin{equation}
    		\label{eq:mueps_prime2}
    		\phiden - \dg \muden
    		= \phiden
    		- \delta \Delta \left( \phiden - \dg \muden \right)
    		- \dg \Psi'_-\left( \phiden - \dg \muden \right)
    		+ \dg \chi \sigmaden.
    	\end{equation}
    	Then, we differentiate \eqref{eq:mueps_prime2} with respect to time. This yields
    	\begin{align*}
    		& \partial_t \left( \phiden - \dg \muden \right)
    		- \delta \Delta \left( \partial_t \left( \phiden - \dg \muden \right) \right) \\
    		& \quad = \partial_t \phiden
    		- \dg \Psi''_-\left( \phiden - \dg \muden \right) \partial_t \left( \phiden - \dg \muden \right)
    		+ \dg \chi \partial_t \sigmaden.
    	\end{align*}
    	Thus, setting
    	\[
    		U := \partial_t \left( \phiden - \dg \muden \right),
    	\]
    	  we obtain the following elliptic equation for $U$:
    	\begin{equation}
    	\label{eq:U_eps_n}
    		- \delta \Delta U
    		+ \left( 1 + \dg \Psi''_-\left( \phiden - \dg \muden \right) \right) U
    		= \partial_t \phiden
    		+ \dg \chi \partial_t \sigmaden.
    	\end{equation}
        We now observe that all the steps above can be rigorously repeated also at the level of the discrete variational formulation.
        Indeed, one can start from \eqref{eq:varform:mu_eps_n_prime}, multiply by $\dg$, add $(\phiden, v)_H$ on both sides and, then, differentiate in time.
        We stress that time-differentiation is allowed in the discretised framework as it only affects the coefficients $a_j^n, c_j^n, d_j^n \in C^1([0,T])$, so that the functions $\partial_t \phiden$, $\partial_t \muden$ and $\partial_t \sigmaden$ are still well-defined with values in $W_n$.
        Consequently, the same $U$ satisfies the variational formulation
        \begin{equation}
    	\label{eq:varform:U_eps_n}
    		\delta ( \nabla U, \nabla v)_H
    		+ \left( \left( 1 + \dg \Psi''_-\left( \phiden - \dg \muden \right) \right) U, v \right)_{\!\! H}
    		= \left( \partial_t \phiden
    		+ \dg \chi \partial_t \sigmaden, v \right)_{\!\! H},
    	\end{equation}
        for any $v \in W_n$.
    	Observe that the right-hand side $\partial_t \phiden
    	+ \dg \chi \partial_t \sigmaden$, is uniformly bounded in $\LT 2 {V^*}$ by \eqref{eq:bound_dtphi_n}--\eqref{eq:bound_dtsigma_n}.
        With the idea of exploiting this bound, we would like to test \eqref{eq:varform:U_eps_n} by $v = \Ncal^{-1} U$, which is still an admissible test function since $U \in W_n$ for almost any $t \in (0,T)$ and $W_n$ is generated by the eigenvectors of $\Ncal$.
        To do this, we add and subtract $\delta U$ on the left-hand side of \eqref{eq:varform:U_eps_n}. This gives
        \begin{align*}
            \delta (\Ncal U, v)_H + (1 - \delta) (U, v)_H
            = - \dg \left( \Psi''_-\left( \phiden - \dg \muden \right), U \right)_{\!\! H}
            + \left( \partial_t \phiden
    		+ \dg \chi \partial_t \sigmaden, v \right)_{\!\! H},
        \end{align*}
        for any $v \in W_n$.
        Then, choosing $v = \Ncal^{-1} U$ in the equation above and setting $f := \partial_t \phiden
    	+ \dg \chi \partial_t \sigmaden$, by means of standard inequalities and using the properties of the operator $\Ncal$, we infer that
    	\begin{align*}
    		& \delta \norm{U}^2_H + (1 - \delta) \norm{U}^2_{V^*} =
    		\duality{f, \Ncal^{-1}U}_{V}
    		- \duality*{ \dg \Psi''_-\left( \phiden - \dg \muden \right) U, \Ncal^{-1}U}_{V} \\
    		& \quad \le
    		\norm{f}_{V^*} \norm{\Ncal^{-1}U}_{V}
    		+ \dg \norm*{\Psi''_-\left( \phiden - \dg \muden \right) U}_{V^*} \norm{\Ncal^{-1}U}_{V} \\
    		& \quad \le
    		\norm{f}_{V^*} \norm{U}_{V^*}
    		+ \dg \norm*{\Psi''_-\left( \phiden - \dg \muden \right) U}_{H} \norm{U}_{V^*} \\
    		& \quad \le
    		\frac14 \norm{U}^2_{V^*} + \norm{f}^2_{V^*}
    		+ \dg \norm{\Psi''_-}_\infty \norm{U}_{H} \norm{U}_{V^*} \\
    		& \quad \le \mezzo \norm{U}^2_{V^*}
    		+ \norm{f}^2_{V^*}
    		+ \frac{\delta^2}{\gamma^2} \norm{\Psi''_-}^2_\infty \norm{U}^2_{H}.
    	\end{align*}
    	Hence, by also integrating on $(0,T)$, we obtain that
    	\begin{equation}
    		\label{eq:estimate_U_n}
    		\delta \underbrace{\left( 1 - \frac{\delta}{\gamma^2} \norm{\Psi''_-}^2_\infty \right)}_{> \, 0} \int_0^T \norm{U}^2_{H} \, \de t
    		+ \left(\mezzo - \delta \right) \int_0^T \norm{U}^2_{V^*} \, \de t
    		\le \int_0^T \norm{f}^2_{V^*} \, \de t.
    	\end{equation}
    	Then, by  \ref{ass:psi_small} and estimates \eqref{eq:bound_dtphi_n}--\eqref{eq:bound_dtsigma_n}, we find the uniform bound
    	\begin{equation}
    		\label{eq:bound_dtphimu_n}
    		\norm*{\partial_t \left( \phiden - \dg \muden \right)}_{\LT 2 {V^*}} \le C,
    	\end{equation}
    	where $C > 0$ is independent of $\delta$, $\eps$ and $n$.

        \textsc{Passage to the limit.}
        We now pass to the limit in the discretisation as $n \to + \infty$ to recover a weak solution to the regularised problem \eqref{eq:phieps}--\eqref{eq:iceps} (see \eqref{eq:varform:phi_eps}--\eqref{eq:varform:sigma_eps}).
        If not further specified, all the convergences below have to be intended as $n \to \infty$.
        By Banach--Alaoglu's theorem, up to a non-relabelled subsequence, the uniform bounds \eqref{eq:bound_sigma_l2v_n}, 
        \eqref{eq:bound_nablamu_n},
        \eqref{eq:bound_phi_l2v_n}, and \eqref{eq:bound_phidgmu_h2_n} imply the weak convergences
        \begin{align}
            & \phiden \weak \phide \quad \hbox{weakly in $\LT 2 V$,} \label{eq:conv_phi_l2v_n} \\
            & \muden \weak \mude \quad \hbox{weakly in $\LT 2 V$,} \label{eq:conv_mu_l2v_n} \\
            & \sigmaden \weakstar \sigmade \quad \hbox{weakly star in $\LT \infty H \cap \LT 2 V$,} \label{eq:conv_sigma_l2v_n} \\
            & \phiden - \dg \muden \weakstar \phiden - \dg \muden \quad \hbox{weakly star in $\LT \infty V \cap \LT 2 {\Hx2}$.} \label{eq:conv_phidgmu_l2w_n}
        \end{align}
        Similarly, by \eqref{eq:bound_dtphi_n}, \eqref{eq:bound_dtsigma_n} and \eqref{eq:bound_dtphimu_n}, we deduce
        \begin{align}
            & \phiden \weak \phide \quad \hbox{weakly in $\HT 1 {V^*}$,} \label{eq:conv_phi_h1vs_n} \\
            & \sigmaden \weak \sigmade \quad \hbox{weakly in $\HT 1 {V^*}$,} \label{eq:conv_sigma_h1vs_n} \\
            & \phiden - \dg \muden \weak \phide - \dg \mude \quad \hbox{weakly in $\HT 1 {V^*}$.} \label{eq:conv_phidgmu_h1vs_n}
        \end{align}
        Then, known compact embeddings  (see \cite[Section 8, Corollary 4]{S1986}), \eqref{eq:conv_phi_l2v_n}--\eqref{eq:conv_phidgmu_l2w_n} and \eqref{eq:conv_phi_h1vs_n}--\eqref{eq:conv_phidgmu_h1vs_n} entail the following strong convergences:
        \begin{align}
            & \phiden \to \phide \quad \hbox{strongly in $\LT 2 H$,} \label{eq:conv_phi_l2h_n} \\
            & \sigmaden \to \sigmade \quad \hbox{strongly in $\LT 2 H$,} \label{eq:conv_sigma_l2h_n} \\
            & \phiden - \dg \muden \to \phide - \dg \mude \quad \hbox{strongly in $\LT 2 V$.} \label{eq:conv_phidgmu_l2v_n}
        \end{align}
        Finally, from \eqref{eq:conv_phi_l2h_n}--\eqref{eq:conv_phidgmu_l2v_n}, up to a further non-relabelled subsequence, we also infer that
        \begin{align}
            & \phiden \to \phide \quad \hbox{a.e.~in $Q_T$,} \label{eq:conv_phi_ae_n} \\
            & \sigmaden \to \sigmade \quad \hbox{a.e.~in $Q_T$,} \label{eq:conv_sigma_ae_n} \\
            & \phiden - \dg \muden \to \phide - \dg \mude \quad \hbox{a.e.~in $Q_T$.} \label{eq:conv_phidgmu_ae_n}
        \end{align}
        With the convergences above, passing to the limit in all the linear terms in \eqref{eq:varform:phi_eps_n}--\eqref{eq:varform:sigma_eps_n} is a standard matter, therefore we only comment on how to pass to the limit in the remaining nonlinear terms.
        First, by \eqref{eq:conv_phi_l2h_n}, \eqref{eq:conv_phi_ae_n}, the Dominated Convergence Theorem and the continuity and boundedness of $b_\eps$, one easily sees that
        \begin{equation}
            \label{eq:conv_b_n}
            b_\eps(\phiden) \to b_\eps(\phide) \quad \hbox{strongly in $\LT 2 H$.}
        \end{equation}
        Next, we comment on the convergence of the term $\Pi^n(\Psipe'(\phiden))$.
        The continuity of $\Psipe'$ and $\Psipe''$, their growth conditions \eqref{ass:Psi_eps}, the convergences \eqref{eq:conv_phi_l2h_n} and \eqref{eq:conv_phi_ae_n}, and the Dominated Convergence Theorem imply that
        \[
            \Psipe'(\phiden) \to \Psipe'(\phide) \quad
            \text{and} \quad
            \Psipe''(\phiden) \to \Psipe''(\phide) \quad \hbox{strongly in $\LT 2 H$.}
        \]
        As a consequence, the bound \eqref{eq:bound_phi_l2v_n}, the convergence \eqref{eq:conv_phi_l2v_n} and the regularity of $\Psipe$ also give that
        \[
            \nabla \Psipe'(\phiden) = \Psipe''(\phiden) \nabla \phiden \weak
            \Psipe''(\phide) \nabla \phide = \nabla \Psipe'(\phide)
            \quad \hbox{weakly in $\LT 2 H$.}
        \]
        Hence, we conclude that
        \[
            \Psipe'(\phiden) \weak \Psipe'(\phide)
            \quad \hbox{weakly in $\LT 2 V$.}
        \]
        Now, we recall that, since the spaces $W_n$ are generated by eigenfunctions of $\Ncal$, the $H$-projection $\Pi^n$ onto $W_n$ coincides with the $V$-projection.
        Then, by standard properties of the projection operators, together with the density of $\cup_{n \in \N} W_n$ in $V$, we further deduce that
        \begin{equation}
            \label{eq:conv_proj_n}
            \Pi^n(\Psipe'(\phiden)) \weak \Psipe'(\phide)
            \quad \hbox{weakly in $\LT 2 V$.}
        \end{equation}
        The combination of \eqref{eq:conv_mu_l2v_n}, \eqref{eq:conv_b_n} and \eqref{eq:conv_proj_n}, together with the uniform bounds \eqref{eq:unifbounds_eps_n}, finally gives the convergence of the gradient term on the left-hand side of \eqref{eq:varform:phi_eps}.
        Indeed, by \eqref{eq:conv_mu_l2v_n} and \eqref{eq:conv_proj_n}, we have that $\nabla \muden + \nabla \Pi^n(\Psipe'(\phiden)) \weak \nabla \mude + \nabla \Psipe'(\phide)$ weakly in $\LT 2 H$.
        By combining this with \eqref{eq:conv_b_n}, we infer that
        \[
            b_\eps(\phiden) \left( \nabla \muden + \nabla \Pi^n(\Psipe'(\phiden)) \right) \weak b_\eps(\phide) \left( \nabla \mude + \nabla \Psipe'(\phide) \right) \quad  \hbox{weakly in $\Lqt 1$.}
        \]
        However, by Banach-Alaoglu's Theorem, the first uniform bound in \eqref{eq:unifbounds_eps_n}, together with the boundedness of $b_\eps$, implies the existence of some $\xi \in \LT 2 H$ such that
        \[
            b_\eps(\phiden) \left( \nabla \muden + \nabla \Pi^n(\Psipe'(\phiden)) \right) \weak \xi \quad \hbox{weakly in $\LT 2 H$.}
        \]
        Then, uniqueness of the weak limits gives that $\xi = b_\eps(\phide) \left( \nabla \mude + \nabla \Psipe'(\phide) \right)$, providing the desired convergence.
        Next, we deal with the convergence of the reaction term on the right-hand sides of \eqref{eq:varform:phi_eps_n} and \eqref{eq:varform:sigma_eps_n}.
        First, by \eqref{eq:conv_phi_l2h_n}, \eqref{eq:conv_phi_ae_n}, the Dominated Convergence Theorem and the continuity and boundedness of $P_\eps$, one easily sees that
        \begin{equation}
            \label{eq:conv_P_n}
            P_\eps(\phiden) \to P_\eps(\phide) \quad \hbox{strongly in $\LT 2 H$.}
        \end{equation}
        Second, \eqref{eq:conv_mu_l2v_n}, \eqref{eq:conv_sigma_l2v_n}, \eqref{eq:conv_phidgmu_l2v_n} and \eqref{eq:conv_proj_n} readily imply that
        \begin{equation}
            \label{eq:conv_R_n}
            R^n_{\delta, \eps}(\phiden, \muden, \sigmaden) \weak R_{\delta, \eps}(\phide, \mude, \sigmade) \quad \hbox{weakly in $\LT 2 H$,}
        \end{equation}
        where
        \[
            R_{\delta, \eps}(\phide, \mude, \sigmade) = \sigmade + \chi \left( 1 - \left( \phide - \dg \mude \right) \right) - (\mude + \Psipe'(\phide)).
        \]
        Hence, \eqref{eq:conv_P_n} and \eqref{eq:conv_R_n} yield
        \[
            P_\eps(\phiden) R^n_{\delta, \eps}(\phiden, \muden, \sigmaden) \weak P_\eps(\phide) R_{\delta,\eps}(\phide, \mude, \sigmade) \quad \hbox{weakly in $\Lqt 1$.}
        \]
        Then, arguing as above, the uniform boundedness of $P_\eps$, the last bound in \eqref{eq:unifbounds_eps_n} and Banach--Alaoglu's Theorem imply that
        \[
            P_\eps(\phiden) R^n_{\delta, \eps}(\phiden, \muden, \sigmaden) \weak P_\eps(\phide) R_{\delta,\eps}(\phide, \mude, \sigmade) \quad \hbox{weakly in $\LT 2 H$,}
        \]
        which results in the convergence of the full reaction term.
        Finally, the last nonlinear term to be discussed is the one involving $\Psi'_-$, but this is again a direct consequence of the continuity of $\Psi'_-$, its linear growth by \ref{ass:psi}, \eqref{eq:conv_phidgmu_l2v_n}, \eqref{eq:conv_phidgmu_ae_n} and the Dominated Convergence Theorem, yielding
        \begin{equation}
            \label{eq:conv_Psim_n}
            \Psi'_-\left( \phiden - \dg \muden \right) \to \Psi'_-\left( \phide - \dg \mude \right) \quad \hbox{strongly in $\LT 2 H$.}
        \end{equation}

        Lastly, we need to verify that the initial conditions $\phi_0 \in H$ and $\sigma_0 \in H$ are attained.
        However, this is a standard matter, because the regularities above and some standard embeddings imply that $\phi, \sigma \in \CT 0 H$.
    \end{proof}

    \section{Proof of Theorem \ref{thm:weaksols_delta}}
    \label{relaxprob}
    
    Here, we start from Proposition \ref{prop:weaksols_eps} and let $\eps$ go to $0$
    along a suitable sequence.
    The main ingredients are some uniform bounds, independent of $\eps$ combined with a new entropy estimate. The latter  exploits the structure depending on the degenerate mobility as well as on the single-well potential.
    We will make use of all the regularising properties introduced in \eqref{eq:reg_mob}--\eqref{ass:Peta_eps}.

        By Proposition \ref{prop:weaksols_eps}, for any $\eps > 0$, there exists a weak solution $(\phide, \mude, \sigmade)$ to the regularised problem \eqref{eq:phieps}--\eqref{eq:iceps} such that
        \begin{align*}
            & \phide \in \HT 1 {V^*} \cap \LT 2 V, \\
            & \mude \in \LT 2 V, \\
            & \phide - \dg \mude \in \HT 1 {V^*} \cap \LT \infty V \cap \LT 2 W, \\
            & \sigmade \in \HT 1 {V^*} \cap \LT \infty H \cap \LT 2 V,
        \end{align*}
        satisfying the identities \eqref{eq:varform:phi_eps}--\eqref{eq:varform:sigma_eps}.
        We now perform some \emph{a priori} estimates which are independent of $\eps > 0$.

        \textsc{Energy estimate.}
        For the main energy estimate, we argue as in the proof of Proposition \ref{prop:weaksols_eps}.
        Then, we observe that the energy inequality \eqref{eq:energyineq_eps_n} is preserved as $n \to + \infty$ thanks to the weak lower semicontinuity of the norms, the weak and strong convergences stated in \eqref{eq:conv_phi_l2v_n}--\eqref{eq:conv_R_n}, and in \eqref{eq:conv_initialenergy_n}.
        Thus, for almost any $t \in (0,T)$, we have
        \begin{equation}
            \label{eq:energyineq_eps}
            \begin{split}
            & \beta \int_\Omega \abs*{\phide(t) - \dg \mude(t)}^2 \de x
            + \mezzo \dg \int_\Omega \abs{\mude(t)}^2 \, \de x \\
            & \qquad
            + \frac{\gamma}{2} \int_\Omega \abs*{\nabla \left(\phide(t) - \dg \mude(t) \right)}^2 \,\de x
            + \alpha \int_\Omega \abs{\sigmade(t)}^2 \, \de x \\
            & \qquad
            + \int_0^t \int_\Omega b_\eps(\phide) \abs*{\nabla (\mude + \Psipe'(\phide))}^2 \, \de x \, \de s \\
            & \qquad
            + \int_0^t \int_\Omega \abs*{\nabla \left(\sigmade + \chi \left( 1 - \left(\phide - \dg \mude \right) \right) \right)}^2 \, \de x \, \de s
            \\
            & \qquad
            + \int_0^t \int_\Omega \left[ \sqrt{P_\eps(\phide)} \left( \sigmade + \chi \left( 1 - \left( \phide - \frac{\delta}{\gamma} \mude \right) \right) - (\mude + \Psipe'(\phide)) \right) \right]^2 \, \de x \, \de s \\
            & \quad \le
            \mathcal{E}_{\delta}(\phi_0, \sigma_0) + C \abs{\Omega},
            \end{split}
        \end{equation}
        where $C > 0$ is a constant independent of $\eps$, $\delta$,
        and $T$.
        Then, \eqref{eq:energyineq_eps} provides the following uniform bounds:
        \begin{equation}
            \label{eq:unifbounds_eps}
            \begin{split}
                & \norm*{\phide - \dg \mude}_{\LT \infty V} \le C, \\
                & \sqrt{\delta} \norm{\mude}_{\LT \infty H} \le C, \\
                & \norm{\sigmade}_{\LT \infty H} \le C, \\
                & \norm*{\sqrt{b_\eps(\phide)} \nabla (\mude + \Psipe'(\phide))}_{\LT 2 H} \le C, \\
                & \norm*{\nabla \left( \sigmade + \chi \left( 1 - \left(\phide - \dg \mude \right) \right) \right)}_{\LT 2 H} \le C, \\
                & \norm*{\sqrt{P_\eps(\phide)} \, R_{\delta, \eps}(\phide, \mude, \sigmade)}_{\LT 2 H}\le C,
            \end{split}
        \end{equation}
        with  $C > 0$ independent of $\eps$ and $T$, and depending on $\delta$ only through $\mathcal{E}_\delta(\phi_0, \sigma_0)$.
        From the first and fifth bounds in \eqref{eq:unifbounds_eps} it also follows that
        \begin{equation}
            \label{eq:bound_sigma_l2v_eps}
            \norm{\sigmade}^2_{\LT 2 V} \le C.
        \end{equation}

        \textsc{Estimates on the time derivatives.}
        Going back to the Galerkin discretisation used in the proof of Proposition \ref{prop:weaksols_eps}, one can also see that estimates \eqref{eq:bound_dtphi_n}, \eqref{eq:bound_dtsigma_n}, and \eqref{eq:bound_dtphimu_n} on the time derivatives hold with a constant independent of $n$ and $\eps$.
        Moreover, their derivation relies only on the uniform bounds given by the energy estimate.
        Then, using the convergences \eqref{eq:conv_phi_h1vs_n}--\eqref{eq:conv_phidgmu_h1vs_n}, and the weak lower semicontinuity of the norms, we get
        \begin{equation}
            \label{eq:bounds_dt_eps}
            \begin{split}
            & \norm{\partial_t \phide}_{\LT 2 {V^*}} \le C, \quad \norm{\partial_t \sigmade}_{\LT 2 {V^*}} \le C, \\
            & \norm*{\partial_t \left(\phide - \dg \mude \right)}_{\LT 2 {V^*}} \le C,
            \end{split}
        \end{equation}
        with $C > 0$ independent of $\eps$.

        \textsc{Entropy estimate.}
        Given the approximate entropy density function $\eta_\eps \in C^2(\R)$ defined by \eqref{eq:reg_entr}, for any $\eps > 0$, we define the approximate entropy functional as follows
    	\begin{equation}
    		\label{eq:entropy_eps}
    		\mathcal{S}_\eps(\phide) = \int_\Omega \eta_\eps(\phide) \, \de x.
    	\end{equation}
        We observe that the entropy functional $\mathcal{S}_\eps$ is bounded below uniformly in $\eps$, since $\eta_\eps$ is a $C^2$ convex function due to \ref{ass:b}, \ref{ass:entropy} and \eqref{eq:reg_entr}.
        Then, one formally deduces the entropy estimate by differentiating $\mathcal{S}_\eps(\phide)$ in time and substituting \eqref{eq:phieps}.
        To do this rigorously, we first recall that, by Proposition \ref{prop:weaksols_eps}, $\phide \in \LT 2 V$ for any $\eps > 0$.
        Consequently, since $\eta_\eps \in C^2(\R)$ and, by \eqref{eq:reg_entr_expl}, has a linearly bounded first derivative and a globally bounded second derivative, it follows that $v = \eta_\eps'(\phide) \in \LT 2 V$ is a valid test function for \eqref{eq:varform:phi_eps}.
        With this choice, we get
        \begin{equation}
            \label{eq:entropyest_eps}
            \begin{split}
            \ddt S_\eps(\phide)
            &
            = \duality{ \partial_t \phide \, \eta_\eps'(\phide) }_V \\
            & = - \int_\Omega b_\eps(\phide) \nabla (\mude + \Psipe'(\phide)) \cdot \eta''(\phide) \nabla \phide \, \de x \\
            & \quad + \int_\Omega P_\eps(\phide) \left(\sigmade + \chi \left( 1 - \left( \phide - \dg \mude \right) \right) - (\mude + \Psipe'(\phide)) \right) \eta_\eps'(\phide) \, \de x \\
            & := I_1 + I_2.
            \end{split}
        \end{equation}
        To estimate $I_1$, we argue as in \cite{PP2021}, that is, we use \eqref{eq:reg_entr} to cancel out the degenerate mobility and integrate by parts.
        More precisely, we have that
        \begin{align*}
            I_1 & =
            - \int_\Omega b_\eps(\phide) \nabla (\mude + \Psipe'(\phide)) \cdot \eta_\eps''(\phide) \nabla \phide \, \de x \\
            & = - \int_\Omega \left( \nabla \mude \cdot \nabla \phide + \nabla (\Psipe'(\phid)) \cdot \nabla \phide \right) \, \de x \\
            & = - \int_\Omega \nabla \mude \cdot \left( \nabla \left(\phide - \dg \mude \right) + \dg \nabla \mude \right)\, \de x
            - \int_\Omega \Psipe''(\phide) \abs{\nabla \phide}^2 \, \de x \\
            & = \int_\Omega \mude \, \Delta \left( \phide - \dg \mude \right) \, \de x - \dg \int_\Omega \abs{\nabla \mude}^2 \, \de x - \int_\Omega \Psipe''(\phide) \abs{\nabla \phide}^2 \, \de x,
        \end{align*}
        where the first integral on the last line is well-defined due to the fact that $\phide - \dg \mude \in \LT 2 {\Hx2}$ for any $\eps > 0$ by Proposition \ref{prop:weaksols_eps}.
        Then, we recall that \eqref{eq:mueps} can be rewritten in the following form (cf. \eqref{eq:mueps_prime}):
        \[
            \mude = - \gamma \Delta \left( \phide - \dg \mude \right) + \Psi'_- \left( \phide - \dg \mude \right) - \chi \sigmade,
        \]
        which now holds in strong form due to the regularities given by Proposition \ref{prop:weaksols_eps}.
        Hence, on account of this equation, we rewrite the first integral on the last line above as
        \begin{align*}
            & \int_\Omega \mude \, \Delta \left( \phide - \dg \mude \right) \, \de x \\
            & \quad = - \gamma \int_\Omega \abs*{\Delta \left( \phide - \dg \mude \right)}^2 \, \de x
            + \int_\Omega \Psi_-'\left(\phide - \dg \mude \right) \Delta \left(\phide - \dg \mude \right) \, \de x \\
            & \qquad - \chi \int_\Omega \sigmade \Delta \left(\phide - \dg \mude \right) \, \de x \\
            & \quad \le - \gamma \int_\Omega \abs*{\Delta \left( \phide - \dg \mude \right)}^2 \, \de x
            + \int_\Omega \Psi_-''\left(\phide - \dg \mude \right) \abs*{\nabla \left(\phide - \dg \mude \right)}^2 \, \de x \\
            & \qquad + \frac{\gamma}{2} \int_\Omega \abs*{\Delta \left( \phide - \dg \mude \right)}^2 \, \de x + \frac{\chi^2}{2 \gamma} \int_\Omega \abs{\sigmade}^2 \, \de x \\
            & \quad \le - \frac{\gamma}{2} \int_\Omega \abs*{\Delta \left( \phide - \dg \mude \right)}^2 \, \de x
            + \norm*{\Psi_-''}_{\infty} \int_\Omega \abs*{\nabla \left(\phide - \dg \mude \right)}^2 \, \de x
            + \frac{\chi^2}{2 \gamma} \int_\Omega \abs{\sigmade}^2 \, \de x,
        \end{align*}
        where we also used \ref{ass:psi}  and the Cauchy--Schwarz and the Young inequalities to handle the chemotactic term.
        For $I_2$, instead, we proceed by employing the Cauchy--Schwarz and the Young inequalities as follows:
        \begin{align*}
            I_2 & =
            \int_\Omega P_\eps(\phide) R_{\delta,\eps}(\phide, \mude, \sigmade) \eta_\eps'(\phide) \, \de x \\
            & = \int_\Omega \sqrt{P_\eps(\phide)} \eta_\eps'(\phide) \sqrt{P_\eps(\phide)} R_{\delta,\eps}(\phide, \mude, \sigmade) \, \de x \\
            & \le \mezzo \norm*{\sqrt{P_\eps(\phide)} \eta_\eps'(\phide)}^2_{H}
            + \mezzo \norm*{\sqrt{P_\eps(\phide)} R_{\delta,\eps}(\phide, \mude, \sigmade)}^2_{H}.
        \end{align*}
        Now, the second term is bounded by \eqref{eq:unifbounds_eps}. Concerning the first one, we use hypothesis \eqref{ass:Peta_eps}, which holds uniformly in $\eps > 0$.
        Then, we obtain
        \begin{align*}
            \norm*{\sqrt{P_\eps(\phide)} \eta_\eps'(\phide)}^2_H
            & = \int_\Omega \abs*{\sqrt{P_\eps(\phide)} \eta_\eps'(\phide)}^2 \, \de x \\
            & \le 2c_4^2 \int_\Omega \abs{\phide}^2 \, \de x + 2c_5^2 \abs{\Omega} \\
            & \le 4c_4^2 \int_\Omega \abs*{\phide - \dg \mude}^2 \, \de x + 4c_4^2 \frac{\delta^2}{\gamma^2} \int_\Omega \abs{\mude}^2 \, \de x + 2c_5^2 \abs{\Omega} \\
            & \le 4c_4^2 \int_\Omega \abs*{\phide - \dg \mude}^2 \, \de x + \frac{4c_4^2 \delta_0}{\gamma} \frac{\delta}{\gamma} \int_\Omega \abs{\mude}^2 \, \de x + 2c_5^2 \abs{\Omega},
        \end{align*}
        for $\delta \in (0, \delta_0)$.
        Thus, by integrating \eqref{eq:entropyest_eps} in $(0,t)$, for any given $t \in (0,T)$, and incorporating the above computations, we infer that (see also \eqref{eq:energyineq_eps})
        \begin{align*}
            & \mathcal{S}_\eps(\phide(t))
            + \frac{\gamma}{2} \int_0^t \int_\Omega \abs*{\Delta \left( \phide - \dg \mude \right)}^2 \, \de x \, \de s \\
            & \qquad + \int_0^t \int_\Omega \Psipe''(\phide) \abs{\nabla \phide}^2 \, \de x \, \de s
            + \dg \int_0^t \int_\Omega \abs{\nabla \mude}^2 \, \de x \, \de s \\
            & \quad
            \le \mathcal{S}_\eps(\phi_0)
            + \norm*{\Psi_-''}_{\infty} \int_0^T \int_\Omega \abs*{\nabla \left(\phide - \dg \mude \right)}^2 \, \de x \, \de t
            + \frac{\chi^2}{2 \gamma} \int_0^T \int_\Omega \abs{\sigmade}^2 \, \de x \, \de t \\
            & \qquad
            + \int_0^T \int_\Omega \left[ \sqrt{P_\eps(\phide)} \left( \sigmade + \chi \left( 1 - \left( \phide - \frac{\delta}{\gamma} \mude \right) \right) - (\mude + \Psipe'(\phide)) \right) \right]^2 \, \de x \, \de t \\
            & \qquad
            + 4c_4^2 \int_0^T \int_\Omega \abs*{\phide - \dg \mude}^2 \, \de x \, \de t
            + \frac{4c_4^2 \delta_0}{\gamma} \frac{\delta}{\gamma} \int_0^T \int_\Omega \abs{\mude}^2 \, \de x \, \de t
            + 2c_5^2 \abs{\Omega} T \\
            & \quad \le \mathcal{S}_\eps(\phi_0) + C_1 \mathcal{E}_{\delta}(\phi_0, \sigma_0) + C_2,
        \end{align*}
        where the constants $C_1, C_2 > 0$ only depend on the parameters of the system, $\abs{\Omega}$ and $T$.
        Moreover, by \eqref{ass:iniz_eps_eta}, one easily sees that $\mathcal{S}_\eps(\phi_0) \le \mathcal{S}(\phi_0)$, since $\phi_0$ satisfies \ref{ass:iniz}.
        Hence, we have the entropy inequality
        \begin{equation}
            \label{eq:entropyineq_eps}
            \begin{split}
            & \mathcal{S}_\eps(\phide(t))
            + \frac{\gamma}{2} \int_0^t \int_\Omega \abs*{\Delta \left( \phide - \dg \mude \right)}^2 \, \de x \, \de s \\
            & \qquad + \int_0^t \int_\Omega \Psipe''(\phide) \abs{\nabla \phide}^2 \, \de x \, \de s
            + \dg \int_0^t \int_\Omega \abs{\nabla \mude}^2 \, \de x \, \de s \\
            & \quad \le \mathcal{S}(\phi_0) + C_1 \mathcal{E}_{\delta}(\phi_0, \sigma_0) + C_2,
            \end{split}
        \end{equation}
        which in turn, also due to the uniform bound from below on $\mathcal{S}_\eps$, provides the following uniform bounds:
        \begin{equation}
    \label{eq:unifbounds_entropy_eps}
            \norm*{\Delta \left( \phide - \dg \mude \right)}_{\LT 2 H} \le C, \qquad
                 \sqrt{\delta} \norm{\nabla \mude}_{\LT 2 H} \le C.
        \end{equation}
        By elliptic regularity theory, the first bound in \eqref{eq:unifbounds_entropy_eps} and the boundary condition \eqref{eq:bceps}, together with the first bound in \eqref{eq:unifbounds_eps}, imply that
        \begin{equation}
       \label{eq:bound_phidgmu_h2_eps}
            \norm*{\phide - \dg \mude}_{\LT 2 {\Hx 2}} \le C.
        \end{equation}
        Additionally, combining the second bound in \eqref{eq:unifbounds_eps} with the second bound in \eqref{eq:unifbounds_entropy_eps}, we get that
        \begin{equation}
            \label{eq:bound_mu_l2v_eps}
            \sqrt{\delta} \norm{\mude}_{\LT 2 V} \le C.
        \end{equation}
        Then, the first bound in \eqref{eq:unifbounds_eps} and \eqref{eq:bound_mu_l2v_eps}  imply that
        \begin{equation}
            \label{eq:bound_phi_l2v_eps}
            \norm{\phide}_{\LT 2 V} \le C,
        \end{equation}
        with a constant $C > 0$ independent of  $\eps$.

        \textsc{Passage to the limit.}
        We now fix $\delta > 0$ and pass to the limit in the regularisation along a suitable vanishing subsequence $\{\eps_n\}_{n\in \mathbb{N}}$ to recover a weak solution to the relaxed system (see \eqref{eq:varform:phi}--\eqref{eq:varform:sigma}).
        Recalling the end of the proof of Proposition \ref{prop:weaksols_eps}, we argue in a similar way.
        If not further specified, all the convergences below have to be intended along a suitable vanishing subsequence of $\eps$.
        The uniform bounds \eqref{eq:bound_sigma_l2v_eps} and  \eqref{eq:bound_phidgmu_h2_eps}--\eqref{eq:bound_phi_l2v_eps} imply, up to a non-relabelled subsequence, the weak convergences
        \begin{align}
            & \phide \weak \phid \quad \hbox{weakly in $\LT 2 V$,} \label{eq:conv_phi_l2v_eps} \\
            & \mude \weak \mud \quad \hbox{weakly in $\LT 2 V$,} \label{eq:conv_mu_l2v_eps} \\
            & \sigmade \weakstar \sigmad \quad \hbox{weakly star in $\LT \infty H \cap \LT 2 V$,} \label{eq:conv_sigma_l2v_eps} \\
            & \phide - \dg \mude \weakstar \phid - \dg \mud \quad \hbox{weakly star in $\LT \infty V \cap \LT 2 {\Hx2}$.} \label{eq:conv_phidgmu_l2w_eps}
        \end{align}
        Similarly, by \eqref{eq:bounds_dt_eps}, we deduce
        \begin{align}
            & \phide \weak \phid \quad \hbox{weakly in $\HT 1 {V^*}$,} \label{eq:conv_phi_h1vs_eps} \\
            & \sigmade \weak \sigmad \quad \hbox{weakly in $\HT 1 {V^*}$,} \label{eq:conv_sigma_h1vs_eps} \\
            & \phide - \dg \mude \weak \phid - \dg \mud \quad \hbox{weakly in $\HT 1 {V^*}$.} \label{eq:conv_phidgmu_h1vs_eps}
        \end{align}
        Then, on account of known compact embeddings  (see \cite[Section 8, Corollary 4]{S1986}), \eqref{eq:conv_phi_l2v_eps}--\eqref{eq:conv_phidgmu_l2w_eps} and \eqref{eq:conv_phi_h1vs_eps}--\eqref{eq:conv_phidgmu_h1vs_eps} entail the strong convergences:
        \begin{align}
            & \phide \to \phid \quad \hbox{strongly in $\LT 2 H$,} \label{eq:conv_phi_l2h_eps} \\
            & \sigmade \to \sigmad \quad \hbox{strongly in $\LT 2 H$,} \label{eq:conv_sigma_l2h_eps} \\
            & \phide - \dg \mude \to \phide - \dg \mude \quad \hbox{strongly in $\LT 2 V$.} \label{eq:conv_phidgmu_l2v_eps}
        \end{align}
        Finally, from \eqref{eq:conv_phi_l2h_eps}--\eqref{eq:conv_phidgmu_l2v_eps}, up to a further non-relabelled subsequence, we also infer that
        \begin{align}
            & \phide \to \phid \quad \hbox{a.e.~in $Q_T$,} \label{eq:conv_phi_ae_eps} \\
            & \sigmade \to \sigmad \quad \hbox{a.e.~in $Q_T$,} \label{eq:conv_sigma_ae_eps} \\
            & \phide - \dg \mude \to \phid - \dg \mud \quad \hbox{a.e.~in $Q_T$.} \label{eq:conv_phidgmu_ae_eps}
        \end{align}
        Then, the limit variables $(\phid, \mud, \sigmad)$ satisfy the regularity properties stated in Theorem \ref{thm:weaksols_delta}.

        Next, we aim to prove that $\phid$ takes its values in $[0,1]$ almost everywhere in $Q_T$.
        To do this, we follow the approach used in \cite{AACG2017} and \cite{PP2021} for the Cahn--Hilliard equation with single-well potential and degenerate mobility.
        First, we look at the regularised entropy density $\eta_\eps$ and observe that, by \eqref{eq:reg_mob} and \eqref{eq:reg_entr}--\eqref{eq:reg_entr_expl}, for $s \ge 1$, it holds
        \[
            \eta_\eps(s) = \eta(1-\eps) + \eta'(1-\eps)(s - (1-\eps)) + \mezzo \frac{1}{b(1-\eps)} (s - (1-\eps))^2 \ge \frac{1}{2 b(1-\eps)} (s - 1)^2,
        \]
        since $\eta(1-\eps)$ and $\eta'(1-\eps)$ are positive by definition.
        Then, observe that
        \begin{align*}
            \int_\Omega (\phide(t) - 1)_+^2 \, \de x
            & = \int_{\{\phide(t) > 1\}} (\phide(t) - 1)^2 \, \de x \\
            & \le 2 b(1-\eps) \int_{\{\phide(t) > 1\}} \eta_\eps(\phide(t)) \, \de x \\
            & \le 2 b(1 - \eps) \int_\Omega \eta_\eps(\phide(t)) \, \de x \\
            & = 2 b(1 - \eps) \mathcal{S}_\eps(\phide(t))
            \le C b(1-\eps),
        \end{align*}
        for almost any $t \in (0,T)$, where $C>0$ is a constant independent of $\eps$ (see \eqref{eq:entropyineq_eps}).
        Then, recalling the strong convergence \eqref{eq:conv_phi_l2h_eps} and that $b(1-\eps) \to b(1)=0$ as $\eps \to 0$ by \ref{ass:b}, passing to the limit as $\eps \to 0$ we deduce that
        \[
            \int_\Omega (\phid(t) - 1)^2_+ \, \de x = 0 \quad \hbox{for a.e.~$t \in (0,T)$,}
        \]
        which implies that $\phid \le 1$ almost everywhere in $Q_T$.
        A similar reasoning on $\eta_\eps$ for $s \le 0$, relying again on the entropy inequality \eqref{eq:entropyineq_eps}, also gives $\phid \ge 0$ almost everywhere in $Q_T$.

        Finally, it remains to show that the limit functions $(\phid, \sigmad, \mud)$ satisfy the identities \eqref{eq:varform:phi}--\eqref{eq:varform:sigma}.
        As in the proof of Proposition \ref{prop:weaksols_eps}, the weak convergences above easily imply the convergence of all the linear terms in \eqref{eq:varform:phi_eps}--\eqref{eq:varform:sigma_eps} as $\eps \to 0$.
        Then, we just comment on how to pass to the limit in the nonlinear terms.
        First, we observe that, by its definition \eqref{eq:reg_mob}, $b_\eps \to b$ uniformly in $[0,1]$.
        Hence, together with \eqref{eq:conv_phi_ae_eps} and the fact that $0 \le \phid \le 1$, this implies that $b_\eps(\phide) \to b(\phid)$ almost everywhere in $Q_T$.
        Then, the boundedness of $b$ (see \ref{ass:b}) and the Dominated Convergence Theorem yield
        \begin{equation}
        \label{eq:conv_b_eps}
            b_\eps(\phide) \to b(\phid) \quad \hbox{strongly in $\LT 2 H$.}
        \end{equation}
        By a completely analogous argument, \ref{ass:P}, \eqref{eq:reg_prol}, \eqref{eq:conv_phi_ae_eps} and the Dominated Convergence Theorem imply that
        \begin{equation}
        \label{eq:conv_P_eps}
            P_\eps(\phide) \to P(\phid) \quad \hbox{strongly in $\LT 2 H$.}
        \end{equation}
        We now consider the gradient term on the left-hand side of \eqref{eq:varform:phi_eps}, which, for any $\eps > 0$, can be rewritten as $b_\eps(\phide) \nabla \mude + b_\eps(\phide) \Psipe''(\phide) \nabla \phide$.
        Our aim is to show that this converges weakly in $\LT 2 H$.
        For the first term, we have (see \eqref{eq:bound_mu_l2v_eps})
        \begin{equation}
        \label{eq:bound_bmu_eps}
            \norm{b_\eps(\phide) \nabla \mude}_{\LT 2 H} \le C_\delta.
        \end{equation}
        Then, arguing as in the last part of the proof of Proposition \ref{prop:weaksols_eps}, the bound  \eqref{eq:bound_bmu_eps}  and the weak and strong convergences \eqref{eq:conv_mu_l2v_eps} and \eqref{eq:conv_b_eps} imply that
        \begin{equation}
        \label{eq:conv_Jeps1}
            b_\eps(\phide) \nabla \mude \weak b(\phid) \nabla \mud \quad \hbox{weakly in $\LT 2 H$.}
        \end{equation}
        Next, we observe that, since $b \Psi_+'' \in C^0([0,1])$ by \ref{ass:b}, $b_\eps \Psipe'' \to b \Psi_+''$ uniformly in $[0,1]$ as $\eps \to 0$.
        Hence, together with \eqref{eq:conv_phi_ae_eps} and the fact that $0 \le \phid \le 1$, this implies that $b_\eps(\phide) \Psipe''(\phide) \to b(\phid) \Psi_+''(\phid)$ almost everywhere in $Q_T$.
        Then, thanks to the boundedness of $b_\eps \Psipe''$ (see \eqref{eq:reg_mob} and \eqref{eq:reg_pot_second}), the Dominated Convergence Theorem yields
        \begin{equation}
        \label{eq:conv_bpsi_eps}
            b_\eps(\phide) \Psipe''(\phide) \to b(\phid) \Psi_+''(\phid) \quad \hbox{strongly in $\LT 2 H$.}
        \end{equation}
        Finally, to deduce the desired convergence, we combine \eqref{eq:unifbounds_eps} and \eqref{eq:bound_bmu_eps} to infer that
        \[
            \norm{b_\eps(\phide) \Psipe''(\phide) \nabla \phide}_{\LT 2 H} \le C_\delta,
        \]
        which, together with \eqref{eq:conv_phi_l2v_eps} and \eqref{eq:conv_bpsi_eps}, implies that
        \begin{equation}
        \label{eq:conv_Jeps2}
            b_\eps(\phide) \Psipe''(\phide) \nabla \phide \weak b(\phid) \Psi_+''(\phid) \nabla \phid \quad  \hbox{weakly in $\LT 2 H$.}
        \end{equation}
        We now deal with the reaction term $P_\eps(\phide) R_{\delta, \eps}(\phide, \mude, \sigmade)$.
        Due to the singularity of $\Psi_+'$, we have to split $R_{\delta, \eps}(\phide, \mude, \sigmade)$ into two parts and treat them separately, namely,
        \[
            R_{\delta, \eps}(\phide, \mude, \sigmade) = \left( \sigmade + \chi \left( 1 - \left(\phide - \dg \mude \right) \right) - \mude \right) - \Psipe'(\phide).
        \]
        Then, we observe that, by \eqref{eq:conv_mu_l2v_eps}--\eqref{eq:conv_phidgmu_l2w_eps} and the Sobolev embedding $V \hookrightarrow \Lx 6$, it holds
        \[
            \sigmade + \chi \left( 1 - \left(\phide - \dg \mude \right) \right) - \mude \weak \sigmad + \chi \left( 1 - \left(\phid - \dg \mud \right) \right) - \mud
        \]
        weakly in $\LT 2 {\Lx 6}$.
        Combining this with the strong convergence \eqref{eq:conv_P_eps}, we easily see that, for any $v \in V \hookrightarrow \Lx 3$,
        \begin{align*}
            & \int_\Omega P_\eps(\phide) \left( \sigmade + \chi \left( 1 - \left(\phide - \dg \mude \right) \right) - \mude \right) v \, \de x \\
            & \quad \to \int_\Omega P(\phid) \left( \sigmad + \chi \left( 1 - \left(\phid - \dg \mud \right) \right) - \mud \right) v \, \de x,
        \end{align*}
        almost everywhere in $[0,T]$ as $\eps \to 0$.
        For the second term, instead, we observe that, since $P \Psi_+' \in C^0([0,1])$ by \ref{ass:P}, $P_\eps \Psipe' \to P \Psi_+'$ uniformly in $[0,1]$ as $\eps \to 0$.
        Hence, together with \eqref{eq:conv_phi_ae_eps} and the fact that $0 \le \phid \le 1$, this implies that $P_\eps(\phide) \Psipe'(\phide) \to P(\phid) \Psi_+'(\phid)$ almost everywhere in $Q_T$.
        To apply the Dominated Convergence Theorem, we now need a uniform bound on $P_\eps(\phide) \Psipe'(\phide)$ in some $L^p$ space.
        By looking at their approximations \eqref{eq:reg_pot} and \eqref{eq:reg_prol}, and relying on the fact that $P \Psi_+' \in C^0([0,1])$ and $P(s) \le c_3^2 b^2(s)$ by \ref{ass:P}, one can easily infer that, for any $0 < \eps < \eps_0$,
        \begin{equation}
            \label{eq:growth_PPsi_eps}
            \abs{P_\eps(s) \Psipe'(s)} \le C + C \abs{s} \quad \hbox{for any $s \in \R$,}
        \end{equation}
        for some $C > 0$ independent of $\eps$.
        Indeed, this is trivial for $\eps_0 \le s \le 1 - \eps_0$. For $s \ge 1 - \eps_0$, instead, one can proceed as follows:
        \begin{align*}
            \abs{P_\eps(s) \Psipe'(s)}
            & = \abs{P(1-\eps) \Psi_+'(1-\eps) + P(1-\eps) \Psi_+''(1-\eps) (s - (1-\eps))} \\
            & \le \norm{P \Psi'}_{C^0([0,1])} + c_3^2 \abs{b(1-\eps)} \abs{b(1-\eps) \Psi_+''(1-\eps)} \abs{s - (1-\eps)} \\
            & \le \norm{P \Psi'}_{C^0([0,1])} + c_3^2 \norm{b}_{C^0([0,1])} \norm{b \Psi_+''}_{C^0([0,1])}  \abs{s}
            \le C + C \abs{s}.
        \end{align*}
        A similar argument also provides the bound for $s \le \eps_0$.
        Then, \eqref{eq:growth_PPsi_eps} and the strong convergence \eqref{eq:conv_phi_l2h_eps} are enough to apply the generalised Dominated Convergence Theorem to conclude that
        \[
            P_\eps(\phide) \Psipe'(\phide) \to P(\phid) \Psi_+'(\phid) \quad \hbox{strongly in $\LT 2 H$.}
        \]
        Thus, we can pass to the limit in the reaction term.
        Finally, we are left to handle the nonlinear term $\Psi_-'(\phide - \dg \mude)$, but this is immediate thanks to the linear growth of $\Psi_-'$ and the strong convergence \eqref{eq:conv_phidgmu_l2v_eps} (cf. Proposition \ref{prop:weaksols_eps}).

        Summing up, we have shown that the limit variables $(\phid, \mud, \sigmad)$ satisfy the identities \eqref{eq:varform:phi}--\eqref{eq:varform:sigma}.
        We conclude the proof of Theorem \ref{thm:weaksols_delta} by stressing that the initial conditions $\phid(0) = \phi_0$ and $\sigmad(0) = \sigma_0$ are trivially satisfied in $H$, due to the standard embedding $\HT 1 {V^*} \cap \LT 2 {V} \hookrightarrow \CT 0 H$.

    \section{Proof of Theorem  \ref{thm:conv_deltato0}}
    \label{limitprob}

        By Theorem \ref{thm:weaksols_delta}, for any $\delta > 0$, there exists  $(\phid, \sigmad, \mud)$ that satisfies \eqref{eq:varform:phi}--\eqref{eq:varform:sigma} and 
        \begin{align*}
            & \phid \in \HT 1 {V^*} \cap \LT 2 V, \\
            & \mud \in \LT 2 V, \\
            & \phid - \dg \mud \in \HT 1 {V^*} \cap \LT \infty V \cap \LT 2 W, \\
            & \sigmad \in \HT 1 {V^*} \cap \LT \infty H \cap \LT 2 V, \\
            & 0 \le \phid \le 1 \quad \hbox{a.e.~in $Q_T$.}
        \end{align*}
        We now proceed as in Theorem \ref{thm:weaksols_delta} and perform some \emph{a priori} estimates uniformly with respect to the relaxation parameter $\delta$.

        \textsc{Energy estimate.}
        We start from the energy inequality \eqref{eq:energyineq_eps} obtained in the proof of Theorem \ref{thm:weaksols_delta} for $\eps > 0$.
        Then, we observe that we can let $\eps$ go to $0$ along a suitable subsequence and, using weak lower semicontinuity of the norms along with the weak and strong convergences \eqref{eq:conv_phi_l2v_eps}--\eqref{eq:conv_phidgmu_ae_eps} and their implications. This gives, for any $t \in (0,T)$, the following energy inequality
        \begin{equation}
            \label{eq:energyineq_delta}
            \begin{split}
            & \beta \int_\Omega \abs*{\phid(t) - \dg \mud(t)}^2 \de x
            + \mezzo \dg \int_\Omega \abs{\mud(t)}^2 \, \de x \\
            & \qquad
            + \frac{\gamma}{2} \int_\Omega \abs*{\nabla \left(\phid(t) - \dg \mud(t) \right)}^2 \,\de x
            + \alpha \int_\Omega \abs{\sigmad(t)}^2 \, \de x \\
            & \qquad
            + \int_0^t \int_\Omega \abs*{\nabla \left(\sigmad + \chi \left( 1 - \left(\phid - \dg \mud \right) \right) \right)}^2 \, \de x \, \de s
            \\
            & \qquad
            + \int_0^t \int_\Omega \left[ \sqrt{P(\phid)} \left( \sigmad + \chi \left( 1 - \left( \phid - \frac{\delta}{\gamma} \mud \right) \right) - (\mud + \Psi_+'(\phid)) \right) \right]^2 \, \de x \, \de s \\
            & \quad \le
            \mathcal{E}_{\delta}(\phi_0, \sigma_0) + C \abs{\Omega},
            \end{split}
        \end{equation}
        where $C > 0$ is a constant independent of $\delta$ and $T$.
        We observe that the singularity of $\Psi''_+$ prevents us from sending to the limit as $\eps \to 0$ the term $\int_0^t \int_\Omega b_\eps(\phide) \abs*{\nabla \mude + \Psipe''(\phide) \nabla \phide}^2$ in \eqref{eq:energyineq_eps}, unless we assume a higher degeneracy for $b$, namely that $\sqrt{b} \Psi''_+ \in C^0$, but this would exclude the typical choice for $b$.
        However, this is not an issue since this term is well-defined and non-negative for any $\eps > 0$, so we can still pass to the limit in \eqref{eq:energyineq_eps} by neglecting it.
        At the same time, we observe that the other singular term involving $\sqrt{P_\eps(\phide)} \Psipe'(\phide)$ easily passes to the limit due to the second condition in \ref{ass:P}.
        Let us now show that the bound \eqref{eq:energyineq_delta} holds uniformly with respect to $\delta$. More precisely, we prove that
        \begin{equation}
            \label{eq:conv_initenergy_delta}
            \mathcal{E}_\delta(\phi_0, \sigma_0) \to \mathcal{E}(\phi_0, \sigma_0) \quad \hbox{as $\delta \to 0$,}
        \end{equation}
        where
        \[
            \mathcal{E}(\phi_0, \delta_0) = \int_\Omega \left( \Psi(\phi_0) + \frac{\gamma}{2} \abs{\nabla \phi_0}^2 + \mezzo \abs{\sigma_0}^2 + \chi \sigma_0 (1 - \phi_0) \right) \, \de x.
        \]
        Indeed, we recall that $\phid - \dg \mud\in C^0([0,T];V)$ and, by Remark \ref{rmk:initial_data}, given $\phi_0, \sigma_0 \in H$, $\phi_0 - \dg \mu(0) \in V$ can be obtained as the unique weak solution to the elliptic problem deduced by \eqref{eq:mu} at time $t = 0$:
        \begin{align*}
            & - \delta \Delta \left(\phi_0 - \dg \mud(0) \right)
            + \left( \phi_0 - \dg \mud(0) \right)
            = \phi_0
            - \dg \Psi'_- \left(\phi_0 - \dg \mud(0)\right) + \dg \chi \sigma_0 \quad && \hbox{ in } \Omega,\\
            & \partial_{\n} \left(\phi_0 - \dg \mud(0) \right) = 0 \quad && \hbox{ on } \partial\Omega.
        \end{align*}
        Now, since $\phi_0 \in V$ by \ref{ass:iniz2}, $\sigma_0 \in H$ and $\Psi_-'$ is of class $C^1$,  linearly bounded and with bounded derivative, one can show that (see Lemma \ref{lemma:singpert})
        \begin{equation}
        \label{singpert}
            \phi_0 - \dg \mud(0) \to \phi_0 \quad \hbox{strongly in $V$ as $\delta \to 0$.}
        \end{equation}
        Then, the convergence above and the continuity of $\Psi$ are enough to pass to the limit in the energy and conclude \eqref{eq:conv_initenergy_delta}.
        Hence, \eqref{eq:energyineq_delta} and \eqref{eq:conv_initenergy_delta} provide the following uniform estimates:
        \begin{equation}
            \label{eq:unifbounds_delta}
            \begin{split}
                & \norm*{\phid - \dg \mud}_{\LT \infty V} \le C, \quad \sqrt{\delta} \norm{\mud}_{\LT \infty H} \le C,
                \quad \norm{\sigmad}_{\LT \infty H} \le C, \\
                & \norm*{\nabla \left( \sigmad + \chi \left( 1 - \left(\phid - \dg \mud \right) \right) \right)}_{\LT 2 H} \le C, \\
                & \norm*{\sqrt{P(\phid)} \, R_{\delta}(\phid, \mud, \sigmad)}_{\LT 2 H} \le C,
            \end{split}
        \end{equation}
        with  $C > 0$ independent of $T$ and of $\delta$.
        From the first and fifth bound in \eqref{eq:unifbounds_delta} it also immediately follows that
        \begin{equation}
    \label{eq:bound_sigma_l2v_delta}
            \norm{\sigmad}_{\LT 2 V} \le C.
        \end{equation}

        \textsc{Estimates on the time derivatives.}
        We start from the uniform bounds \eqref{eq:bounds_dt_eps}.
        We recall that such bounds were obtained by means of the estimates \eqref{eq:bound_dtphi_n}, \eqref{eq:bound_dtsigma_n} and \eqref{eq:bound_dtphimu_n} derived within the Galerkin discretisation in the proof of Proposition \ref{prop:weaksols_eps}.
        Moreover, we stress that, for any $\eps >0$, their derivation only relied on the uniform bounds obtained through the main energy estimate at the approximated level, which, in view of \eqref{eq:conv_initenergy_delta}, now holds with a constant independent of $\eps$ and $\delta$.
        Therefore, by using the convergences \eqref{eq:conv_phi_h1vs_eps}--\eqref{eq:conv_phidgmu_h1vs_eps}, one obtains by the weak lower semicontinuity of the norms, that
        \begin{equation}
            \label{eq:bounds_dt_delta}
            \norm{\partial_t \phid}_{\LT 2 {V^*}} \le C, \quad
            \norm{\partial_t \sigmad}_{\LT 2 {V^*}} \le C, \quad
            \norm*{\partial_t \left(\phid - \dg \mud \right)}_{\LT 2 {V^*}} \le C,
        \end{equation}
        with $C > 0$ independent of $\delta$.

        \textsc{Entropy estimate.}
        Also in this case we start from the entropy inequality \eqref{eq:entropyineq_eps} derived in the proof of Theorem \ref{thm:weaksols_delta} for any $\eps > 0$.
        In view of \eqref{eq:conv_initenergy_delta}, we observe that the right-hand side of \eqref{eq:entropyineq_eps} is uniformly bounded in $\eps$ and $\delta$.
        Therefore, thanks to the convergences \eqref{eq:conv_mu_l2v_eps} and \eqref{eq:conv_phidgmu_l2w_eps}, by the weak lower semicontinuity of the norms we deduce that
        \begin{equation}
            \label{eq:unifbounds_entropy_delta}
            \norm*{\Delta \left( \phid - \dg \mud \right)}_{\LT 2 H} \le C, \quad
               \sqrt{\delta} \norm{\nabla \mud}_{\LT 2 H} \le C,
        \end{equation}
        with $C > 0$ independent of $\delta$.
        We remark that the singularity of $\Psi_+$ does not allow us take the limit of the full entropy inequality \eqref{eq:entropyineq_eps}. However, the uniform bound from below on $\mathcal{S}_\eps$ and the non-negativity of the third term, for any fixed $\eps > 0$, are enough to get \eqref{eq:unifbounds_entropy_delta}.
        Then, by elliptic regularity theory, the first bound in \eqref{eq:unifbounds_delta} together with the first bound in \eqref{eq:unifbounds_entropy_delta}, imply that
        \begin{equation}
            \label{eq:bound_phidgmu_h2_delta}
            \norm*{\phid - \dg \mud}_{\LT 2 {\Hx 2}} \le C.
        \end{equation}
        Additionally, combining the second bound in \eqref{eq:unifbounds_delta} with the second bound in \eqref{eq:unifbounds_entropy_delta}, we get that
        \begin{equation}
            \label{eq:bound_mu_l2v_delta}
            \sqrt{\delta} \norm{\mud}_{\LT 2 V} \le C.
        \end{equation}
        Then, the first bounds in \eqref{eq:unifbounds_delta} and \eqref{eq:bound_mu_l2v_delta} give
        \begin{equation}
            \label{eq:bound_phi_l2v_delta}
            \norm{\phid}_{\LT 2 V} \le C,
        \end{equation}
        with a constant $C > 0$ independent of $\delta$.

        \textsc{Passage to the limit.}
        Before passing to the limit as $\delta \to 0$, we rewrite the relaxed system \eqref{eq:phi}--\eqref{eq:ic} using the identity (see \eqref{eq:mu})
        \begin{equation}
		\label{eq:muprime}
		   \mud = - \gamma \Delta \left( \phid - \dg \mud \right)
		  + \Psi'_-\left(\phid - \dg \mud\right) - \chi \sigmad.
	    \end{equation}
        Thus, we obtain a formulation similar to \eqref{eq:phi00}--\eqref{eq:ic00}. More precisely, we get
        \begin{alignat}{2}
    		& \partial_t \phid
    		- \div \left( b(\phid) \nabla \left(- \gamma \Delta \left( \phid - \dg \mud \right) + \Psi_+'(\phid) + \Psi_-'\left( \phid - \dg \mud \right) - \chi \sigmad \right) \right) \notag \\
    		& \quad = P(\phid) \bigg( \sigmad + \chi \left( 1 - \left( \phid - \dg \mud \right) \right) + \gamma \Delta \left( \phid - \dg \mud \right) \notag \\
            & \qquad - \Psi_+'(\phid) - \Psi_-'\left( \phid - \dg \mud \right) + \chi \sigmad \bigg)
            \quad && \hbox{in $Q_T$,} \label{eq:phidelta00} \\
    		& \partial_t \sigmad -
    		\Delta \left( \sigmad + \chi \left( 1 - \left(\phid - \frac{\delta}{\gamma} \mud \right) \right) \right) \notag \\
    		& \quad = - P(\phid) \bigg( \sigmad + \chi \left( 1 - \left( \phid - \dg \mud \right) \right) + \gamma \Delta \left( \phid - \dg \mud \right) \notag \\
            & \qquad - \Psi_+'(\phid) - \Psi_-'\left( \phid - \dg \mud \right) + \chi \sigmad \bigg)
            \quad && \hbox{in $Q_T$,} \label{eq:sigmadelta00} \\
    		& \partial_{\n} \left( \phid - \frac{\delta}{\gamma} \mud \right) = \partial_{\n} \sigmad = 0, \notag \\
    		& \quad b(\phid) \partial_{\n} \left(- \gamma \Delta \left( \phid - \dg \mud \right) + \Psi_+'(\phid) + \Psi_-'\left( \phid - \dg \mud \right) - \chi \sigmad \right) = 0
    		\quad && \hbox{on $\Sigma_T$,} \label{eq:bcdelta00} \\
    		& \phid(0) = \phi_0,
    		\quad \sigmad(0) = \sigma_0
    		\quad && \hbox{in $\Omega$,} \label{eq:icdelta00}
	    \end{alignat}
       which has to be interpreted in the weak sense. 
       On account of Theorem \ref{thm:weaksols_delta}, this system can also we written in the weaker formulation (cf. \eqref{eq:varform:phi00}--\eqref{eq:varform:sigma00}):
        \begin{align}
            & \duality{\partial_t \phid, v}_V
            + (\vec{J}_\delta, \nabla v)_H
            \notag \\
    		& \quad = \bigg( P(\phid) \bigg( \sigmad + \chi \left( 1 - \left( \phid - \dg \mud \right) \right) + \gamma \Delta \left( \phid - \dg \mud \right) \notag \\
            & \qquad - \Psi_+'(\phid) - \Psi_-'\left( \phid - \dg \mud \right) + \chi \sigmad \bigg), v \bigg)_{\!\! H} \label{eq:varform:phidelta00} \\
            & (\vec{J}_\delta, \vec{\xi})_H
            = - \left(\gamma \Delta \left( \phid - \dg \mud \right) \, b'(\phid) \nabla \phid, \vec{\xi} \right)_{\!\! H}
            - \left( \gamma b(\phid) \Delta \left( \phid - \dg \mud \right), \div \vec{\xi} \right)_{\!\! H} \notag \\
            & \quad + (b(\phid) \Psi_+''(\phid) \nabla \phid, \vec{\xi})_H
            + \left( b(\phid) \Psi_-''\left(\phid - \dg \mud\right) \nabla \left( \phid - \dg \mud \right), \vec{\xi} \right)_{\!\! H} \notag \\
            & \qquad - \chi (b(\phid) \nabla \sigmad, \vec{\xi})_H
            \label{eq:varform:Jdelta} \\
            & \duality{\partial_t \sigmad, v}_V
            + (\nabla (\sigmad + \chi (1 - \phid)), \nabla v)_H \notag \\
            & \quad = - \bigg( P(\phid) \bigg( \sigmad + \chi \left( 1 - \left( \phid - \dg \mud \right) \right) + \gamma \Delta \left( \phid - \dg \mud \right) \notag \\
            & \qquad - \Psi_+'(\phid) - \Psi_-'\left( \phid - \dg \mud \right) + \chi \sigmad \bigg), v \bigg)_{\!\! H} \label{eq:varform:sigmadelta00}
        \end{align}
        for any $v \in V$, for any $\vec{\xi} \in H^1(\Omega; \R^d) \cap L^\infty(\Omega; \R^d)$ and almost everywhere in $(0,T)$.
        Our aim is to show that $(\phid, \mud, \sigmad)$ converges to a weak solution $(\phi, \vec{J}, \sigma)$ to problem \eqref{eq:phi00}--\eqref{eq:ic00}, in the sense of Definition \ref{def:weaksol_limit},
        as $\delta \to 0$ along a suitable subsequence.

        The main difficulty lies in handling the terms related to the flux $\vec{J}_\delta$.
        To do this, we  need one more uniform bound.
        Recall that for any fixed $\delta > 0$, the flux $\vec{J}_\delta$ is well-defined as $\vec{J}_\delta = b(\phid) \nabla \mud + b(\phid)\Psi_+''(\phid) \nabla \phid$ (see also \ref{ass:psi} and \ref{ass:b}).
        Moreover, from \eqref{eq:conv_Jeps1} and \eqref{eq:conv_Jeps2} we know that
        \[
            \vec{J}_{\delta, \eps} = b_\eps(\phide) \nabla \mude + b_\eps(\phide)\Psipe''(\phide) \nabla \phide \weak \vec{J}_\delta
            \quad \hbox{weakly in $\LT 2 H$ as $\eps \to 0$.}
        \]
        Thus, by the weak lower-semicontinuity of the norms, we get
        \begin{equation}
        \label{eq:bound_Jdelta}
        \begin{split}
            \norm{\vec{J}_\delta}^2_{\LT 2 H}
            & \le \liminf_{\eps \to 0} \,\, \norm{\vec{J}_{\delta, \eps}}^2_{\LT 2 H} \\
            & \le \liminf_{\eps \to 0} \,\, \norm{b_\eps}_\infty \int_\Omega b_\eps(\phide) \abs{\nabla \mude + \Psipe''(\phide) \nabla \phide}^2 \, \de x \le C,
        \end{split}
        \end{equation}
        by \eqref{eq:energyineq_eps} and the boundedness of $b$, where $C > 0$ is independent of $\delta$.

        We now collect all the uniform estimates and use known compactness results to extract converging subsequences.
        If not further specified, all the convergences below have to be intended as $\delta \to 0$ along a suitable subsequence.
        First of all, from (see \eqref{eq:bound_mu_l2v_delta})
        \[
            \sqrt{\delta} \norm{\mud}_{\LT 2 V} \le C,
        \]
        it follows that
        \begin{equation}
            \label{eq:conv_mu_l2v_delta}
            \delta \mud \to 0 \quad \hbox{strongly in $\LT 2 V$}.
        \end{equation}
        Then, by Banach--Alaoglu's theorem, the uniform bounds \eqref{eq:bound_sigma_l2v_delta}, \eqref{eq:bound_phidgmu_h2_delta}, and \eqref{eq:bound_phi_l2v_delta} imply that
        \begin{align}
            & \phid \weak \phi \quad \hbox{weakly in $\LT 2 V$,} \label{eq:conv_phi_l2v_delta} \\
            & \sigmad \weakstar \sigma \quad \hbox{weakly star in $\LT \infty H \cap \LT 2 V$,} \label{eq:conv_sigma_l2v_delta} \\
            & \phid - \dg \mud \weakstar \phi \quad \hbox{weakly star in $\LT \infty V \cap \LT 2 {\Hx2}$,} \label{eq:conv_phidgmu_l2w_delta}
        \end{align}
        where we also used \eqref{eq:conv_mu_l2v_delta} to identify the limit in \eqref{eq:conv_phidgmu_l2w_delta}.
        Similarly, by \eqref{eq:bounds_dt_delta}, we deduce the following weak convergences:
        \begin{align}
            & \phid \weak \phi \quad \hbox{weakly in $\HT 1 {V^*}$,} \label{eq:conv_phi_h1vs_delta} \\
            & \sigmad \weak \sigma \quad \hbox{weakly in $\HT 1 {V^*}$,} \label{eq:conv_sigma_h1vs_delta} \\
            & \phid - \dg \mud \weak \phi \quad \hbox{weakly in $\HT 1 {V^*}$.} \label{eq:conv_phidgmu_h1vs_delta}
        \end{align}
        Consequently, by known compact embeddings (see \cite[Section 8, Corollary 4]{S1986}), \eqref{eq:conv_phi_l2v_delta}--\eqref{eq:conv_phidgmu_l2w_delta} and \eqref{eq:conv_phi_h1vs_delta}--\eqref{eq:conv_phidgmu_h1vs_delta} imply the following strong convergences:
        \begin{align}
            & \phid \to \phi \quad \hbox{strongly in $\LT 2 H$,} \label{eq:conv_phi_l2h_delta} \\
            & \sigmad \to \sigma \quad \hbox{strongly in $\LT 2 H$,} \label{eq:conv_sigma_l2h_delta} \\
            & \phid - \dg \mud \to \phi \quad \hbox{strongly in $\LT 2 V$.} \label{eq:conv_phidgmu_l2v_delta}
        \end{align}
        In particular, \eqref{eq:conv_mu_l2v_delta} and \eqref{eq:conv_phidgmu_l2v_delta} also entail that
        \begin{equation}
            \label{eq:conv_phi_l2v_strong_delta}
            \phid \to \phi \quad \hbox{strongly in $\LT 2 V$.}
        \end{equation}
        Additionally, from \eqref{eq:conv_phi_l2h_delta}--\eqref{eq:conv_phi_l2v_strong_delta} we also infer that
        \begin{align}
            & \phid \to \phi, \, \nabla \phid \to \nabla \phi \quad \hbox{a.e.~in $Q_T$,} \label{eq:conv_phi_ae_delta} \\
            & \sigmad \to \sigma \quad \hbox{a.e.~in $Q_T$,} \label{eq:conv_sigma_ae_delta} \\
            & \phid - \dg \mud \to \phi \quad \hbox{a.e.~in $Q_T$.} \label{eq:conv_phidgmu_ae_delta}
        \end{align}
        From \eqref{eq:conv_phi_ae_delta}, since $0 \le \phid \le 1$ by Theorem \ref{thm:weaksols_delta}, we deduce that
        \[
            0 \le \phi \le 1 \quad \hbox{a.e.~in $Q_T$.}
        \]
       Recalling \eqref{eq:muprime}, thanks to \eqref{eq:conv_phidgmu_l2w_delta}, \eqref{eq:conv_phidgmu_ae_delta}, the continuity and the linear growth of $\Psi'_-$, the Dominated Convergence Theorem and \eqref{eq:conv_sigma_l2v_delta}, we deduce that
        \begin{equation}
            \label{eq:conv_mu_l2h_delta}
            \mud \weak - \gamma \Delta \phi + \Psi'_-(\phi) - \chi \sigma \quad \hbox{weakly in $\LT 2 H$.}
        \end{equation}
       Therefore, by \eqref{eq:bound_Jdelta}, it also follows that
        \begin{equation}
            \label{eq:conv_J_delta}
            \vec{J}_\delta \weak \vec{J} \quad \hbox{weakly in $\LT 2 H$.}
        \end{equation}
        Then, the limit variables $(\phi, \vec{J}, \sigma)$ satisfy the regularity properties stated in Theorem \ref{thm:conv_deltato0}.
        
        We are now left to check that $(\phi, \vec{J}, \sigma)$ satisfy the identities \eqref{eq:varform:phi00}--\eqref{eq:varform:sigma00}.
        Starting from \eqref{eq:varform:phidelta00}--\eqref{eq:varform:sigmadelta00}, the weak convergences above easily imply the convergence of all the linear terms.
        Then, as before, we are left with the nonlinear terms.
        We start from the reaction terms appearing on the right-hand sides of \eqref{eq:varform:phidelta00} and \eqref{eq:varform:sigmadelta00}.
        First of all, by the continuity and boundedness of $P$ and \eqref{eq:conv_phi_ae_delta}, the Dominated Convergence Theorem implies that
        \begin{equation}
            \label{eq:conv_P_delta}
            P(\phid) \to P(\phi) \quad \hbox{strongly in $\LT p {\Lx p}$ for any $p \ge 1$.}
        \end{equation}
        Hence, the weak convergences in $\LT 2 H$ of $\sigmad$, $\phid - \dg \mud$ and $\Delta \left( \phid - \dg \mud \right)$ by \eqref{eq:conv_sigma_l2v_delta} and \eqref{eq:conv_phidgmu_l2w_delta} are enough to conclude that
        \begin{align*}
            & \int_\Omega P(\phid) \left( (1 + \chi) \sigmad + \chi \left( 1 - \left(\phid - \dg \mud \right) \right) + \gamma \Delta \left(\phid - \dg \mud \right) \right) v \, \de x \\
            & \quad \to
            \int_\Omega P(\phi) \left( (1 + \chi) \sigma + \chi (1 - \phi) - \gamma \Delta \phi \right) v \, \de x,
        \end{align*}
        since $v \in V \hookrightarrow \Lx 6$ and \eqref{eq:conv_P_delta} can be applied with $p=3$.
        Then, we must handle the two terms involving the convex splitting of $\Psi$.
        For the convex one, we use \ref{ass:P} together with  \eqref{eq:conv_phi_ae_delta}, the fact that $0 \le \phid, \phi \le 1$ and the Dominated Convergence Theorem, to conclude that
        \[
            P(\phid) \Psi_+'(\phid) \to P(\phi) \Psi_+'(\phi) \quad \hbox{strongly in $\LT 2 H$.}
        \]
        For the concave one, we use again \eqref{eq:conv_phi_ae_delta}, \eqref{eq:conv_phidgmu_ae_delta}, the continuity and boundedness of $P$, the continuity and linear growth of $\Psi_-'$ and the Dominated Convergence Theorem, to obtain
        \[
            P(\phid) \Psi_-'\left(\phid - \dg \mud \right) \to P(\phi) \Psi_-'(\phi) \quad \hbox{strongly in $\LT 2 H$.}
        \]
        This provides the convergence of the full reaction term.
        Our last task is now to identify the limit the right-hand side of \eqref{eq:varform:Jdelta}.
        We proceed term by term.
        For the first one, we observe that, by \ref{ass:b_c1} and \eqref{eq:conv_phi_ae_delta}, we have
        \[
            b'(\phid) \nabla \phid \to b'(\phi) \nabla \phi \quad \hbox{a.e.~in $Q_T$.}
        \]
        Moreover, by the boundedness of $b$ and \eqref{eq:conv_phi_l2v_strong_delta}, we have that
        \[
            \abs{b'(\phid) \nabla \phid} \le \norm{b'}_\infty \abs{\nabla \phid} \to \norm{b'}_\infty \abs{\nabla \phi} \quad \hbox{strongly in $\LT 2 H$.}
        \]
        Then, by the generalised Dominated Convergence Theorem, it follows that
        \begin{equation}
            \label{eq:conv_bphi_delta}
            b'(\phid) \nabla \phid \to b'(\phi) \nabla \phi \quad \hbox{strongly in $\LT 2 H$.}
        \end{equation}
        Consequently, \eqref{eq:conv_phidgmu_l2w_delta},  \eqref{eq:conv_bphi_delta}, and the crucial fact that $\vec{\xi} \in L^\infty(\Omega; \R^d)$ imply that
        \[
            - \int_\Omega \gamma \Delta \left( \phid - \dg \mud \right) b'(\phid) \nabla \phid \cdot \vec{\xi} \, \de x
            \to
            - \int_\Omega \gamma \Delta \phi \, b(\phi) \nabla \phi \cdot \vec{\xi} \, \de x.
        \]
        For the second term, we observe that, by the continuity and boundedness of $b$ and \eqref{eq:conv_phi_ae_delta}, the Dominated Convergence Theorem gives
        \begin{equation}
            \label{eq:conv_b_delta}
            b(\phid) \to b(\phi) \quad \hbox{strongly in $\LT 2 H$.}
        \end{equation}
        This convergence combined with \eqref{eq:conv_phidgmu_l2w_delta} and the uniform bound
        \[
            \norm*{b(\phid) \Delta \left(\phid - \dg \mud \right)}_{\LT 2 H} \le \norm{b}_\infty \norm*{\Delta \left(\phid - \dg \mud \right)}_{\LT 2 H} \le C,
        \]
        which follows from \eqref{eq:unifbounds_entropy_delta}, imply that
        \[
            b(\phid) \Delta \left(\phid - \dg \mud \right) \weak b(\phi) \Delta \phi \quad \hbox{weakly in $\LT 2 H$.}
        \]
        Then, we proved the convergence of the second term, since $\div \vec{\xi} \in H$.
        Next, we recall that $b \Psi_+'' \in C^0([0,1])$ by \ref{ass:b}. Thus,, by \eqref{eq:conv_phi_ae_delta} and the Dominated Convergence Theorem, we deduce that
        \[
            b(\phid) \Psi_+''(\phid) \to b(\phi) \Psi_+''(\phi) \quad \hbox{strongly in $\LT 2 H$,}
        \]
        which in combination with \eqref{eq:conv_phi_l2v_strong_delta} yields the convergence of the third term.
        Similarly, the pointwise convergences \eqref{eq:conv_phi_ae_delta} and \eqref{eq:conv_phidgmu_ae_delta}, and the boundedness and continuity of $b$ and $\Psi_-''$, using again the Dominated Convergence Theorem, imply that
        \[
            b(\phid) \Psi_-''\left( \phid - \dg \mud \right) \to b(\phi) \Psi_-''(\phi) \quad \hbox{strongly in $\LT 2 H$.}
        \]
        This convergence and \eqref{eq:conv_phidgmu_l2v_delta} allow us to pass in the fourth term.
        Finally, \eqref{eq:conv_sigma_l2v_delta} and \eqref{eq:conv_b_delta} also yield the convergence of the last term on the right-hand side of \eqref{eq:varform:Jdelta}.
        The proof of Theorem \ref{thm:conv_deltato0} is finished.
   
\appendix \section{A technical lemma}
Here we prove a result which has been used in the previous proof (see \eqref{singpert}).
    \begin{lemma}
    \label{lemma:singpert}
        For any $\delta > 0$, let $u_\delta \in V$ be the unique weak solution to the following elliptic problem
        \begin{alignat}{2}
            & - \delta \Delta u_\delta + u_\delta = f + \delta g + \delta h(u_\delta) \quad && \hbox{in $\Omega$,} \label{eq:elliptic} \\
            & \partial_{\n} u_\delta = 0 \quad && \hbox{on $\partial \Omega$,} \label{eq:elliptic_bc}
        \end{alignat}
        where $f \in V$, $g \in H$ and $h \in C^1(\R)$ such that for any $s \in R$
        \begin{align}
            & \abs{h(s)} \le k_1 \abs{s} + k_2, \label{eq:elliptic_h1} \\
            & \abs{h'(s)} \le k_3, \label{eq:elliptic_h2}
        \end{align}
        for some $k_1, k_2, k_3 > 0$. Then, as $\delta$ goes to $0$ along a suitable sequence, we have
        \[
            u_\delta \to f \quad \hbox{strongly in $V$.}
        \]
    \end{lemma}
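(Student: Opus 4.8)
The plan is to recast problem \eqref{eq:elliptic}--\eqref{eq:elliptic_bc} through the resolvent-type operator $S_\delta\colon H\to W$ that sends $\psi\in H$ to the unique $S_\delta\psi\in V$ satisfying $\delta(\nabla S_\delta\psi,\nabla v)_H+(S_\delta\psi,v)_H=(\psi,v)_H$ for every $v\in V$; by the very definition of weak solution this gives the identity $u_\delta=S_\delta f+\delta S_\delta g+\delta S_\delta h(u_\delta)$. (Existence and uniqueness of $u_\delta$ for $\delta$ small also follow from this, since $u\mapsto S_\delta(f+\delta g+\delta h(u))$ is a contraction on $H$ by \eqref{eq:elliptic_h2}, and elliptic regularity then gives $u_\delta\in W$.) First I would record the a priori bound obtained by testing \eqref{eq:elliptic} with $v=u_\delta$: using \eqref{eq:elliptic_h1} and absorbing $\delta k_1\|u_\delta\|_H^2$ into the left-hand side for $\delta$ small, one gets $\|u_\delta\|_H\le C$ and $\sqrt{\delta}\,\|\nabla u_\delta\|_H\le C$ uniformly in $\delta$, whence $\|g+h(u_\delta)\|_H\le C$ by \eqref{eq:elliptic_h1}.

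Second, I would establish two elementary mapping properties of $S_\delta$, both by energy estimates. Testing the defining identity with $v=S_\delta\psi$ yields $\|S_\delta\psi\|_H\le\|\psi\|_H$ and $\delta\|\nabla S_\delta\psi\|_H^2\le\|\psi\|_H^2$, hence $\|S_\delta\|_{\mathcal{L}(H;V)}\le C\delta^{-1/2}$ for $\delta\le 1$. Testing instead with $v=-\Delta S_\delta\psi$ — admissible since $S_\delta\psi\in W$ by elliptic regularity — and integrating by parts gives $\|\nabla S_\delta\psi\|_H\le\|\nabla\psi\|_H$ for $\psi\in V$, so $\|S_\delta\|_{\mathcal{L}(V)}\le 1$. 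The first property immediately handles the two perturbative terms: $\|\delta S_\delta g\|_V\le C\delta^{1/2}\|g\|_H\to 0$ and $\|\delta S_\delta h(u_\delta)\|_V\le C\delta^{1/2}\|h(u_\delta)\|_H\le C\delta^{1/2}\to 0$, using the uniform bound from Step 1.

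The core step — and the one I expect to be the real obstacle — is to show $S_\delta f\to f$ strongly in $V$. A naive energy estimate on $u_\delta-f$ is \emph{not} enough: it only produces $u_\delta\to f$ in $H$ together with a bound (not convergence) of $\nabla u_\delta$ in $H$, so one must exploit the spectral structure. Since the eigenfunctions $\{w_j\}$ of $\Ncal$ from Section \ref{sec:hps_res} are also eigenfunctions of the Neumann Laplacian, one has $S_\delta w_j=(1+\delta\nu_j)^{-1}w_j$ with $\nu_j:=\lambda_j-1\ge 0$, so for any finite sum $f_\star=\sum_{j=1}^N c_jw_j$ one gets $\|S_\delta f_\star-f_\star\|_V\le\delta\sum_{j=1}^N|c_j|\,\nu_j\,\|w_j\|_V\to 0$. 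For general $f\in V$, fixing $\eta>0$ and choosing such an $f_\star$ with $\|f-f_\star\|_V\le\eta$ (the span of the $w_j$ is dense in $V$), the triangle inequality together with $\|S_\delta\|_{\mathcal{L}(V)}\le 1$ gives $\|S_\delta f-f\|_V\le 2\eta+\|S_\delta f_\star-f_\star\|_V$, and letting $\delta\to 0$ then $\eta\to 0$ yields $S_\delta f\to f$ in $V$. Combining the three steps, $u_\delta=S_\delta f+\delta S_\delta g+\delta S_\delta h(u_\delta)\to f$ strongly in $V$ — in fact along the whole family $\delta\to 0^+$, which is stronger than the assertion along a subsequence.
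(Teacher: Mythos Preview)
Your proof is correct and in fact yields convergence of the whole family $\delta\to 0^+$, not just along a subsequence. It is, however, a genuinely different argument from the paper's.

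The paper works directly on the full nonlinear equation: testing \eqref{eq:elliptic} with $u_\delta$ and then with $-\Delta u_\delta$ produces the sharp quantitative bound
\[
\|u_\delta\|_V^2 \le \frac{1}{c_\delta}\|f\|_V^2 + \frac{\delta}{c_\delta}\|g\|_H^2 + \frac{\delta}{c_\delta}\,\frac{k_2|\Omega|}{2},
\qquad c_\delta\to 1,
\]
from which one extracts a weakly convergent subsequence in $V$, identifies the limit as $f$ from the weak formulation, and then upgrades to strong convergence via the Hilbert-space trick ``weak convergence plus convergence of norms'': weak lower semicontinuity gives $\|f\|_V\le\liminf\|u_\delta\|_V$, while the displayed bound gives $\limsup\|u_\delta\|_V\le\|f\|_V$.

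Your route instead linearises first, writing $u_\delta=S_\delta f+\delta S_\delta g+\delta S_\delta h(u_\delta)$ through the resolvent $S_\delta=(I-\delta\Delta)^{-1}$, and then proves two operator-norm facts: $\|S_\delta\|_{\mathcal{L}(H;V)}\le C\delta^{-1/2}$ (killing the perturbative terms) and $\|S_\delta\|_{\mathcal{L}(V)}\le 1$ (allowing the density argument). The core convergence $S_\delta f\to f$ in $V$ is then obtained spectrally, using that the Neumann eigenfunctions diagonalise $S_\delta$, combined with density of finite sums in $V$. The paper's approach is more self-contained --- it needs no explicit spectral input and avoids introducing $S_\delta$ --- while yours is more structural and makes the role of the linear part transparent; it also gives convergence without passing to subsequences. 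One minor wording point: when you ``test with $v=-\Delta S_\delta\psi$'', this function is only in $H$, not in $V$; what you are really using is that $S_\delta\psi\in W$ so the equation holds in strong form and can be multiplied by any $H$-function, with the integrations by parts justified by the homogeneous Neumann condition on $S_\delta\psi$.
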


    \begin{proof}
        First of all, we observe that, by standard results on elliptic equations, the weak solution $u_\delta \in V$ exists and it is unique under the given hypotheses.
        Moreover, by elliptic regularity theory, since $f  \in V$, $g \in H$ and $h$ is Lipschitz, it also follows that $u_\delta \in \Hx 2$ is actually a strong solution to \eqref{eq:elliptic}--\eqref{eq:elliptic_bc}.

        Testing \eqref{eq:elliptic} with $u_\delta$ and using the Cauchy--Schwarz and the Young inequalities, together with \eqref{eq:elliptic_h1}, we get that
        \begin{align*}
            \delta \norm{\nabla u_\delta}^2_H + \norm{u_\delta}^2_H
            & \le \mezzo \norm{u_\delta}^2_H + \mezzo \norm{f}^2_H + \frac{\delta}{2} \norm{u_\delta}^2_H + \frac{\delta}{2} \norm{g}^2_H + \delta \int_\Omega h(u_\delta) u_\delta \, \de x \\
            & \le \mezzo \norm{u_\delta}^2_H + \mezzo \norm{f}^2_H + \frac{\delta}{2} \norm{u_\delta}^2_H + \frac{\delta}{2} \norm{g}^2_H \\
            & \qquad + \delta k_1 \norm{u_\delta}^2_H + \delta \frac{k_2}{2} \norm{u_\delta}^2_H + \delta \frac{k_2}{2} \abs{\Omega}.
        \end{align*}
        Then, collecting the terms and multiplying by $2$, we obtain
        \begin{equation}
            \label{eq:elliptic:est1}
            \delta \norm{\nabla u_\delta}^2_H + (1 - \delta (1 + 2 k_1 + k_2)) \norm{u_\delta}^2_H
            \le \norm{f}^2_H + \delta \norm{g}^2_H + \delta k_2 \abs{\Omega}.
        \end{equation}
        Next, we test \eqref{eq:elliptic} with $- \Delta u$ and, by \eqref{eq:elliptic_h2}, we infer that
        \begin{align*}
            \delta \norm{\Delta u_\delta}^2_H + \norm{\nabla u_\delta}^2_H
            & \le \mezzo \norm{\nabla u_\delta}^2_H + \mezzo \norm{\nabla f}^2_H + \frac{\delta}{2} \norm{\Delta u_\delta}^2_H + \frac{\delta}{2} \norm{g}^2_H + \delta \int_\Omega h'(u_\delta) \nabla u_\delta \cdot u_\delta \, \de x \\
            & \le \mezzo \norm{\nabla u_\delta}^2_H + \mezzo \norm{\nabla f}^2_H + \frac{\delta}{2} \norm{\Delta u_\delta}^2_H + \frac{\delta}{2} \norm{g}^2_H + \delta k_3 \norm{\Delta u_\delta}^2_H.
        \end{align*}
        Consequently, we also have the following inequality:
        \begin{equation}
            \label{eq:elliptic:est2}
            \delta \norm{\Delta u_\delta}^2_H + (1 - 2 k_3 \delta) \norm{\nabla u_\delta}^2_H
            \le \norm{\nabla f}^2_H + \delta \norm{g}^2_H.
        \end{equation}
        Now, let us set
        \[
            c_\delta := \min\{ 1 - \delta (1 + 2k_1 + k_2), 1 - 2k_3 \delta \},
        \]
        and observe that $c_\delta > 0$ if $\delta$ is small enough and $c_\delta \to 1$ as $\delta \to 0$.
        Then, by neglecting the first terms in the respective inequalities, the combination of \eqref{eq:elliptic:est1} and \eqref{eq:elliptic:est2} implies that
        \begin{equation}
            \label{eq:elliptic:est3}
            \norm{u_\delta}^2_V \le \frac{1}{c_\delta} \norm{f}^2_V + \frac{\delta}{c_\delta} \norm{g}^2_H + \frac{\delta}{c_\delta} \frac{k_2 \abs{\Omega}}{2},
        \end{equation}
        where the right-hand side is uniformly bounded for small $\delta$.
        Hence, by Banach--Alaoglu's Theorem, we can extract a non-relabelled subsequence such that
        \[
            u_\delta \weak \bar{u} \quad \hbox{weakly in $V$ as $\delta \to 0$,}
        \]
        for some $\bar{u} \in V$.
        Additionally, from the weak formulation of \eqref{eq:elliptic} for any $v \in V$, one easily deduces that the weak convergence above implies that $\bar{u} = f$ in $V$.
       In order to show the strong convergence in $V$, it suffices to establish the convergence of the norms.
        This follows from the following chain of inequalities, relying on the weak lower-semicontinuity of the norm, \eqref{eq:elliptic:est3} and the fact that $c_\delta \to 1$ as $\delta \to 0$.
        Indeed, we have that
        \begin{align*}
            \norm{f}^2_V
            & \le \liminf_{\delta \to 0} \, \norm{u_\delta}^2_V
            \le \limsup_{\delta \to 0} \, \norm{u_\delta}^2_V \\
            & \le \lim_{\delta \to 0} \left( \frac{1}{c_\delta} \norm{f}^2_V + \frac{\delta}{c_\delta} \norm{g}^2_H + \frac{\delta}{c_\delta} \frac{c_2 \abs{\Omega}}{2} \right)
            = \norm{f}^2_V.
        \end{align*}
        Then, $\norm{u_\delta}_V \to \norm{f}_V$, and, together with the weak convergence above, this proves Lemma \ref{lemma:singpert}.
    \end{proof}

\noindent
{\bf Acknowledgments.} 
The authors wish to thank the anonymous reviewers, who carefully read the manuscript and provided many comments that improved the quality of the paper.
This work originated during a visit of C.~Cavaterra and M.~Grasselli to the Laboratoire ``Jacques-Louis Lions'', Sorbonne Universit\'e, Paris, whose hospitality is gratefully acknowledged. C.~Cavaterra, M.~Fornoni, and M.~Grasselli are members of GNAMPA (Gruppo Nazionale per l'Ana\-li\-si Matematica, la Probabilit\`{a} e le loro Applicazioni) of INdAM (Istituto Nazionale di Alta Matematica).
This research is part of the activities of ``Dipartimento di Eccellenza 2023-2027'' of Universit\`a degli Studi di Milano (C.~Cavaterra and M. Fornoni) and Politecnico di Milano (M.~Grasselli).



    \footnotesize

\end{document}